\newcommand{\B}[1]{\mathbb{#1}}
\newcommand{\C}[1]{\mathcal{#1}}
\newcommand{\xra}[1]{\xrightarrow{#1}}
\newcommand{\xla}[1]{\xleftarrow{#1}}
\newcommand{\dglmod}[1]{{#1}\text{-\bf dgmod}}
\newcommand{\rmod}[1]{\text{{\bf mod}-}{#1}}
\newtheorem{thm}{Theorem}[section]
\newtheorem{lem}[thm]{Lemma}
\newtheorem{prop}[thm]{Proposition}
\theoremstyle{definition}
\newtheorem{defn}[thm]{Definition}
\newtheorem{rem}[thm]{Remark}
\numberwithin{equation}{section}
\author{Atabey Kaygun}
\email{kaygun@math.ohio-state.edu}
\address{Department of Mathematics, The Ohio State University, Columbus 43210, Ohio USA}
\title{Products in Hopf-cyclic cohomology}
\begin{document}
\maketitle

\section{Introduction}

It is a truism to say that the study of Hopf algebras provides an
unified framework for a wide range of algebras such as group rings,
enveloping algebras of Lie algebras and quantum groups.  Beyond this
unified framework, Hopf algebras also provide the true symmetries of
noncommutative spaces.  Yet, as it is observed in
\cite{Khalkhali:IntroHopfCyclicCohomology} and
\cite{Kaygun:UniversalHopfCyclicTheory} the notion of `symmetry of a
noncommutative space' is not as straightforward as its classical
counterpart in that there are fundamentally different types of
`equivariances' for noncommutative spaces.  In this article, we will
investigate cohomological consequences of these different types of
Hopf equivariances under various scenarios.

Hopf-cyclic (co)homology and equivariant cyclic cohomology of module
(co)algebras, in their most general form as defined
in~\cite{Kaygun:BialgebraCyclicK} and
\cite{Khalkhali:HopfCyclicHomology}, can now be described via
appropriate bivariant cohomology theories~\cite{Kaygun:BivariantHopf}.
These bivariant cohomology theories provide us the right vocabulary
and the right kind of tools to investigate various kinds of products
and pairings in the Hopf-cyclic and equivariant cyclic settings.  We
already established in~\cite{Kaygun:BivariantHopf} that equivariant
cyclic (co)homology groups of $H$-module (co)algebras are graded
modules over the graded algebra ${\rm Ext}^*_H(k,k)$ where $H$ is the
underlying Hopf algebra.  Not only is this the right kind of
equivariance for an equivariant cohomology theory but also is a very
useful computational tool in obtaining new equivariant cyclic classes
from old ones by using the action.  However, Hopf-cyclic cohomology
lacks such an action precisely because it is, by design, oblivious to
the cohomology of the underlying Hopf algebra viewed as an algebra or
a coalgebra depending on the type of the symmetry at hand.  To improve
the situation, in Theorem~\ref{HopfCupProduct} we develop a pairing,
similar to one of the cup products obtained in
~\cite{Khalkhali:CupProducts}, of the form
\begin{align*}
  \smile\colon HC^p_{\rm Hopf}(C,M)\otimes HC^q_{\rm Hopf}(A,M)\to & HC^{p+q}(A) 
\end{align*}
for an $H$-module coalgebra $C$ acting equivariantly on a $H$-module
algebra $A$ (Definition~\ref{EquivariantAction}) where $M$ is an
arbitrary coefficient module/comodule.  The most striking corollary to
this pairing is Theorem~\ref{Agreement} where we recover
Connes-Moscovici characteristic map \cite[Section VIII, Proposition
1]{ConnesMoscovici:HopfCyclicCohomology} by letting $q=0$, $C=H$ and
letting $M = k_{(\sigma,\delta)}$ a coefficient module coming from a
modular pair in involution.  The ideas instrumental in constructing
this pairing are the derived functor interpretation of Hopf-cyclic and
equivariant cyclic cohomology~\cite{Kaygun:BivariantHopf}, and the
Yoneda interpretation of Ext-groups~\cite{Yoneda:ExtGroups}.  We use
the same ideas with little modification in
Theorem~\ref{EquivariantProduct} to obtain a completely new pairing in
equivariant cyclic cohomology of the form
\begin{align*}
    \smile\colon HC^p_H(C,M)\otimes HC^q_H(A,M)\to & 
          \bigoplus_{r=0}^{p+q} HC^{p+q-r}(A)\otimes {\rm Ext}_H^r(k,k)
\end{align*}
where, as before, $C$ is an $H$-module coalgebra acting equivariantly
on an $H$-module algebra $A$, and $M$ is an arbitrary coefficient
module/comodule.

Once we couple equivariant/Hopf-cyclic cohomology and the Yoneda
product, we obtain other types of products and pairings by imposing
various conditions on the underlying Hopf algebra, the coefficient
modules and (co)module (co)algebras involved
\begin{align*}
  HC^p_{\rm Hopf}(A,M)\otimes HC^q_{\rm Hopf}(A',M')\to & HC^{p+q}_{\rm Hopf}(A\otimes A',M\Box^H M')
          & \text{Theorem }\ref{CocommutativeExternalProduct}\\
  HC^p_H(A,M)\otimes HC^q_H(A',M')\to & HC^{p+q}_H(A\otimes A',M\Box^H M')
          & \text{Theorem }\ref{CocommutativeExternalProduct}\\
  HC^{\vee,p}_{\rm Hopf}(Z,M)\otimes HC^{\vee,q}_{\rm Hopf}(Z',M')\to &
     HC^{\vee,p+q}_{\rm Hopf}(Z\otimes Z', M\otimes_H M')
          & \text{Theorem }\ref{CommutativeExternalProduct}\\
  HC^p_{\rm Hopf}(A,M)\otimes HC^q_{\rm Hopf}(B,M)\to & HC^{p+q}(A\rtimes B)  
          & \text{Theorem }\ref{CupProductCrossedProduct}\\
  HC^p(Z\ltimes C)\otimes HC_{\rm Hopf}^{\vee,q}(Z,M)\to & HC_{\rm Hopf}^{p+q}(C,M) 
          & \text{Theorem }\ref{CrossedProductCoalgebra}
\end{align*}
where $HC^*$, $HC^*_{\rm Hopf}$, $HC^*_H$ and $HC^{\vee,*}_{\rm Hopf}$
are ordinary cyclic, Hopf-cyclic, equivariant cyclic and dual Hopf
cyclic cohomology functors respectively.

The last section of our paper is devoted to ramifications of an
interesting technical problem in cyclic cohomology which manifests
itself here in the context of pairings in Hopf-cyclic (co)homology.
It is clear by now that there are essentially two different
homotopical frameworks for the category cyclic modules: (i) Connes'
derived category of cyclic modules~\cite{Connes:ExtFunctors} and (ii)
Cuntz-Quillen formalism of homotopy category of towers of super
complexes~\cite{CuntzQuillen:NonsingularityII} which is equivalent to
the derived category of mixed complexes~\cite{Kassel:MixedComplexes}
and the derived category of
$S$-modules~\cite{Kassel:BivariantChernCharacter} by
\cite{Quillen:CyclicHomologyType}.  One can see the difference in the
simple fact that for a cyclic module $X_\bullet$, the derived functors
${\rm Ext}^*_{\Lambda}(k_\bullet^\vee,X_\bullet)$ in the category of
cyclic modules compute the dual cyclic homology of $X_\bullet$ while
the derived functors ${\rm\bf Ext}^*_\C{M}(\C{B}_*(k_\bullet^\vee),
\C{B}_*(X_\bullet))$ in the category of mixed complexes compute the
negative cyclic homology of $X_\bullet$ \cite[Theorem
2.3]{JonesKassel:BivariantCyclicTheory}.  Here $k_\bullet$ is the
cocyclic module of the ground field viewed as a coalgebra, the dual
cyclic homology of $X_\bullet$ is the cyclic cohomology of
$X_\bullet^\vee$, the cyclic dual of a (co)cyclic module $X_\bullet$
defined by using Connes' duality functor~\cite{Connes:ExtFunctors},
and $\C{B}_*(Z_\bullet)$ is the mixed complex of a (co)cyclic module
$Z_\bullet$.  The main result of this last section is
Theorem~\ref{AnalogousPairings} where we prove replacing the derived
category cyclic modules by the derived category of mixed complexes, or
$S$-modules, or the homotopy category of towers of super complexes in
Theorem~\ref{HopfCupProduct}, Theorem~\ref{EquivariantProduct} and
Theorem~\ref{CocommutativeExternalProduct} will not change the
pairings we already defined.

In this article $k$ will denote an arbitrary field.  We make no
assumption about the characteristic of $k$.  We will use $H$ to denote
a bialgebra, or a Hopf algebra with an invertible antipode over $k$,
whenever necessary.  All tensor products, unless otherwise explicitly
stated, are over $k$.

\vspace{3mm} 

\noindent{\bf Acknowledgements} \\ This article is conceived during my
stay in Warsaw in November and December of 2006 as a part of Transfer
of Knowledge Programme in Noncommutative Geometry and Quantum Groups.
I was partially supported by Marie Curie Fellowship
MKTD-CT-2004-509794.  I would like to thank the Department of Physics
of Warsaw University and the Banach Center for their invitation, warm
hospitality and support.  Especially, I would like to thank Piotr M.
Hajac for suggesting that I gave a talk about cup products and
characteristic map in Hopf-cyclic cohomology in his Noncommutative
Geometry Seminar, and our ensuing discussions on Hopf-cyclic
cohomology.

\section{Equivariant actions of coalgebras on algebras}

In this section, $A$ will denote a unital associative left $H$-module
algebra and $C$ will denote a counital coassociative left $H$-module
coalgebra.  Explicitly, one has
\begin{align*}
  h(a_1a_2) =  (h_{(1)} a_1) (h_{(2)} a_2) \quad\text{ and }\quad
  (hc)_{(1)}\otimes (hc)_{(2)} = h_{(1)} c_{(1)}\otimes h_{(2)} c_{(2)}
\end{align*}
for any $a_1,a_2\in A$, $c\in C$ and $h\in H$.  We also assume
\begin{align*}
  h(1_A) = & \varepsilon(h) 1_A \quad\text{ and }\quad
  \varepsilon(hc) = \varepsilon(h)\varepsilon(c)
\end{align*}
for any $c\in C$ and $h\in H$.  We will use $M$ to denote an arbitrary
$H$-module/comodule with no assumption on the interaction between the
$H$-module and $H$-comodule structures on $M$.

Given two morphisms $f_1,f_2\in {\rm Hom}_k(C,A)$ we define their
convolution product as
\begin{align*}
    (f_1* f_2)(c) := f_1(c_{(1)}) f_2(c_{(2)})
\end{align*}
for any $c\in C$.  This binary operation on ${\rm Hom}_k(C,A)$ is an
associative product.  The unit for this algebra is $\eta(c):=
\varepsilon(c) 1_A$.  The proof of the following lemma is routine.

\begin{lem}\label{Pairing}
  There exists a morphism of algebras of the form $\alpha\colon
  A\to{\rm Hom}_k(C,A)$ if and only if one has a pairing $\phi\colon
  C\otimes A\to A$ which satisfies
  \begin{align*}
    \phi(c,a_1 a_2) = \phi(c_{(1)},a_1) \phi(c_{(2)},a_2)
    \quad\text{ and }\quad
    \phi(c,1) = \varepsilon(c) 1_A
  \end{align*}
  for any $c\in C$, $a,a_1,a_2\in A$ and $h\in H$.
\end{lem}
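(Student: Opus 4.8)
The plan is to exhibit a bijection between the two kinds of data by the usual currying-uncurrying correspondence, and then check that the algebra-compatibility on one side matches the two displayed identities on the other. First I would observe that, as sets, $\mathrm{Hom}_k(A,\mathrm{Hom}_k(C,A))\cong\mathrm{Hom}_k(C\otimes A,A)$ via the adjunction $\alpha\mapsto\phi$ with $\phi(c,a):=\alpha(a)(c)$, and conversely $\phi\mapsto\alpha$ with $\alpha(a)(c):=\phi(c,a)$; this is $k$-linear and natural, so no content is lost here.

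Next I would unwind what it means for $\alpha$ to be a morphism of algebras, where the target carries the convolution product described above with unit $\eta(c)=\varepsilon(c)1_A$. Multiplicativity $\alpha(a_1a_2)=\alpha(a_1)*\alpha(a_2)$ evaluated at $c\in C$ reads
\begin{align*}
  \phi(c,a_1a_2)=\alpha(a_1a_2)(c)=(\alpha(a_1)*\alpha(a_2))(c)
  =\alpha(a_1)(c_{(1)})\,\alpha(a_2)(c_{(2)})=\phi(c_{(1)},a_1)\,\phi(c_{(2)},a_2),
\end{align*}
which is exactly the first displayed identity. Unitality $\alpha(1_A)=\eta$ evaluated at $c$ reads $\phi(c,1_A)=\alpha(1_A)(c)=\eta(c)=\varepsilon(c)1_A$, which is the second displayed identity. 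Conversely, given $\phi$ satisfying the two conditions, the same computation run backwards shows the associated $\alpha$ is multiplicative and unital; one should also note in passing that $\alpha(a)$ genuinely lands in $\mathrm{Hom}_k(C,A)$ for each fixed $a$, which is immediate from $k$-linearity of $\phi$ in the second slot being unused—only linearity in $c$ is needed, and that is built into $\phi$ being a $k$-linear map on $C\otimes A$.

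Since both directions of the correspondence are already spelled out and each algebra axiom for $\alpha$ translates term-by-term into one of the two identities for $\phi$, there is no real obstacle; the only thing to be slightly careful about is bookkeeping the Sweedler notation so that the coproduct of $c$ is applied on the correct side in the convolution, and confirming that associativity of the convolution product (already asserted in the text) is what guarantees the two formulations of ``morphism of algebras'' are consistent. This is why the statement is flagged as routine.
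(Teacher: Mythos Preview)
Your argument is correct and is exactly the routine verification the paper has in mind; indeed the paper does not spell out a proof at all, simply declaring it routine, and your currying/uncurrying translation of multiplicativity and unitality into the two displayed identities is the canonical way to carry it out.
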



\begin{defn}\label{EquivariantAction}
If one has a pairing between a module coalgebra $C$ and a module
algebra $A$ as described in Lemma~\ref{Pairing} then $C$ is said to
act on $A$.  Such an action is going to be called equivariant if one
also has
\begin{align*}
  h\phi(c,a) = \phi(h c, a)
\end{align*}
for any $a\in A$, $h\in H$ and $c\in C$.
\end{defn}

One can observe that an $H$-module coalgebra $C$ acts on an $H$-module
algebra $A$ equivariantly if and only if the canonical morphism of
algebras $\alpha\colon A\to {\rm Hom}_k(C,A)$ factors through the
inclusion ${\rm Hom}_H(C,A)\subseteq {\rm Hom}_k(C,A)$
\cite{Khalkhali:CupProducts}.

\begin{defn}
  If $X_\bullet$ is a (para-)cocyclic $k$-module and $Y_\bullet$ is a
  (para-)cyclic $k$-module then the graded module 
  \[ diag{\rm  Hom}_k(X_\bullet,Y_\bullet):= \bigoplus_n {\rm Hom}_k(X_n,Y_n) \]
  carries a a (para-)cyclic module structure defined as
  \begin{align*}
    (\partial_j f)(x_{n-1}) := & \partial^Y_j f(\partial^X_j(x_{n-1})) \quad 0\leq j\leq n\\
    (\sigma_j f_n)(x_{n+1}) := & \sigma^Y_j f(\sigma^X_j(x_{n+1})) \quad 0\leq j\leq n\\
    (\tau_n f_n)(x_n) := & \tau_{n,Y} f (\tau_{n,X}(x_n))
  \end{align*}
  for any $f\in {\rm Hom}_k(X_n,Y_n)$, $x_i\in X_i$ for $i=n-1,n,n+1$.
\end{defn}

We will use $Cyc_\bullet(X)$ to denote the classical cyclic $k$-module
of an (co)associative (co)unital $k$-(co)algebra $X$.  Also, we will
use $k_\bullet$ to denote $Cyc_\bullet(k^c)$ the {\em cocyclic}
$k$-module of the ground field viewed as a coalgebra and
$k_\bullet^\vee$ to denote $Cyc_\bullet(k)$ the {\em cyclic}
$k$-module of the ground field $k$ viewed as an algebra.  Note that in
this case $k_\bullet^\vee$ is actually the cyclic dual of the cocyclic
$k$-module $k_\bullet$ in the sense of \cite[Lemme
1]{Connes:ExtFunctors}.  In general, if $X_\bullet$ is an arbitrary
(co)cyclic $k$-module, $X_\bullet^\vee$ will denote its cyclic dual.
$T_\bullet(C,M)$ and $T_\bullet(A,M)$ are going to denote the
para-(co)cyclic complex (also referred as ``the cover complex'') of
the $H$-module coalgebra $C$ and $H$-module algebra $A$ with
coefficients in a $H$-module/comodule $M$
respectively~\cite{Kaygun:BivariantHopf}.

Here we recall the para-(co)cyclic structure morphisms on both
$T_\bullet(C,M)$ and $T_\bullet(A,M)$ from
\cite{Kaygun:BivariantHopf}.  The modules are defined as
\begin{align*}
 T_n(C,M):= C^{\otimes n+1}\otimes M 
 \quad\text{ and }\quad
 T_n(A,M):= A^{\otimes n+1}\otimes M
\end{align*}
We define the structure morphisms on $T_\bullet(C,M)$ by
\begin{align*}
 \partial_0(c^0\otimes\cdots\otimes c^n\otimes m)
  := & c^0_{(1)}\otimes c^0_{(2)}\otimes c^1\otimes\cdots\otimes c^n\otimes m\\
\sigma_0(c^0\otimes\cdots\otimes c^n\otimes m)
  := & c^0\otimes \varepsilon(c^1)\otimes c^2\cdots\otimes c^n\otimes m\\
\tau_n(c^0\otimes\cdots\otimes c^n\otimes m)
  := & c^1\otimes\cdots\otimes c^n\otimes m_{(-1)}c^0\otimes m_{(0)}
\end{align*}
Then we define $\partial_j := \tau_{n+1}^{-j}\partial_0\tau_n^j$ and
$\sigma_i := \tau_{n-1}^{-j}\sigma_0\tau_n^j$.  Similarly, we let
\begin{align*}
  \partial_0(a_0\otimes\cdots\otimes a_n\otimes m)
   := & a_0 a_1\otimes a_2\otimes a_n\otimes m\\
  \sigma_0(a_0\otimes\cdots\otimes a_n\otimes m)
   := & a_0\otimes 1_A\otimes a_1\otimes\cdots\otimes a_n\otimes m\\
  \tau_n(a_0\otimes\cdots\otimes a_n\otimes m)
   := & S^{-1}(m_{(-1)}) a_n\otimes a_0\otimes\cdots\otimes 
        a_{n-1}\otimes m_{(0)}
\end{align*}
Then we define $\partial_j := \tau_{n-1}^j\partial_0\tau_n^{-j}$
and $\sigma_j := \tau_{n+1}^j\sigma_0\tau_n^{-j}$.

\begin{prop}\label{Lift}
  Assume $C$ acts on $A$ equivariantly and let $M$ be an arbitrary
  $H$-module/comodule.  Let us define
  \begin{align}\label{Characteristic}
    \alpha_n(a_0\otimes\cdots\otimes a_n)(c^0\otimes\cdots\otimes c^n\otimes m)
    := & \phi(c^0,a_0)\otimes\cdots\otimes \phi(c^n,a_n)\otimes m
  \end{align}
  for any $n\geq 0$, $m\in M$, $a_i\in A$, $c^i\in C$ for
  $i=0,\ldots,n$.  Then $\alpha_\bullet$ defines a morphism of
  para-cyclic $k$-modules of the form
  \begin{align*}
    \alpha_\bullet\colon Cyc_\bullet(A) \to 
    diag{\rm Hom}_H(T_\bullet(C,M), T_\bullet(A,M))
  \end{align*}
\end{prop}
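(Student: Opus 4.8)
The plan is to verify directly the two conditions required of a morphism of para-cyclic $k$-modules with target $diag{\rm Hom}_H(T_\bullet(C,M),T_\bullet(A,M))$: that every $\alpha_n(a_0\otimes\cdots\otimes a_n)$ is $H$-linear, and that $\alpha_\bullet$ commutes with the para-cyclic structure morphisms. For $H$-linearity one checks $\alpha_n(a_0\otimes\cdots\otimes a_n)(h\cdot\xi)=h\cdot\alpha_n(a_0\otimes\cdots\otimes a_n)(\xi)$ for $h\in H$, $\xi\in T_n(C,M)$ straight from the explicit $H$-module structures on the cover complexes recalled in~\cite{Kaygun:BivariantHopf}; the equivariance relation $h\,\phi(c,a)=\phi(hc,a)$ of Definition~\ref{EquivariantAction} is exactly what makes this go through, the identity on the coefficient slot $M$ causing no trouble. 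Since the para-(co)cyclic structure morphisms of $T_\bullet(C,M)$ and $T_\bullet(A,M)$ are $H$-linear, $diag{\rm Hom}_H(T_\bullet(C,M),T_\bullet(A,M))$ sits inside $diag{\rm Hom}_k(T_\bullet(C,M),T_\bullet(A,M))$ as a para-cyclic submodule, so it is enough to verify the intertwining relations in the latter. Finally, in each of $Cyc_\bullet(A)$, $T_\bullet(A,M)$, $T_\bullet(C,M)$ and $diag{\rm Hom}_k(T_\bullet(C,M),T_\bullet(A,M))$ the operators $\partial_j$ and $\sigma_j$ for $j\geq1$ are obtained from $\partial_0$ and $\sigma_0$ by conjugation with the same powers of $\tau$; hence once $\alpha_\bullet$ is known to commute with $\partial_0$, $\sigma_0$ and $\tau_n$, it commutes with everything, and we are reduced to these three generators.

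Commutation with $\partial_0$ and $\sigma_0$ is immediate from Lemma~\ref{Pairing}. Evaluating $\alpha_{n-1}\bigl(\partial_0(a_0\otimes\cdots\otimes a_n)\bigr)$ and $\partial_0\,\alpha_n(a_0\otimes\cdots\otimes a_n)$ on a generator $c^0\otimes\cdots\otimes c^{n-1}\otimes m$ of $T_{n-1}(C,M)$ yields in the first tensor slot $\phi(c^0,a_0a_1)$ respectively $\phi(c^0_{(1)},a_0)\,\phi(c^0_{(2)},a_1)$, all other slots and the coefficient $m$ being transported unchanged; the two agree by the multiplicativity $\phi(c,a_1a_2)=\phi(c_{(1)},a_1)\,\phi(c_{(2)},a_2)$. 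Likewise $\alpha_{n+1}\bigl(\sigma_0(a_0\otimes\cdots\otimes a_n)\bigr)$ and $\sigma_0\,\alpha_n(a_0\otimes\cdots\otimes a_n)$, evaluated on a generator of $T_{n+1}(C,M)$, both insert $\varepsilon(c^1)\,1_A$ into the appropriate slot because $\phi(c,1_A)=\varepsilon(c)1_A$, and hence coincide.

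The only step that calls for genuine work — and the one I expect to be the real obstacle — is commutation with $\tau_n$, because here the $H$-comodule structure of $M$ and the antipode come into play. Unravelling $(\tau_n\,\alpha_n(a_0\otimes\cdots\otimes a_n))$ on $c^0\otimes\cdots\otimes c^n\otimes m$, one first applies $\tau_n$ on $T_\bullet(C,M)$, then $\alpha_n(a_0\otimes\cdots\otimes a_n)$, producing $\phi(c^1,a_0)\otimes\cdots\otimes\phi(c^n,a_{n-1})\otimes\phi(m_{(-1)}c^0,a_n)\otimes m_{(0)}$, and then $\tau_n$ on $T_\bullet(A,M)$, whose first output slot is $S^{-1}\bigl((m_{(0)})_{(-1)}\bigr)\cdot\phi(m_{(-1)}c^0,a_n)$. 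I would then use coassociativity of the coaction to rewrite the iterated coaction as $m_{(-2)}\otimes m_{(-1)}\otimes m_{(0)}$, turning this slot into $S^{-1}(m_{(-1)})\cdot\phi(m_{(-2)}c^0,a_n)$; apply equivariance, $\phi(m_{(-2)}c^0,a_n)=m_{(-2)}\cdot\phi(c^0,a_n)$, followed by the $H$-module axiom, to reach $\bigl(S^{-1}(m_{(-1)})m_{(-2)}\bigr)\cdot\phi(c^0,a_n)$; and finally invoke the antipode identity $S^{-1}(h_{(2)})h_{(1)}=\varepsilon(h)1_H$ together with the counit axiom of the comodule $M$, which collapses this slot to $\phi(c^0,a_n)$ and brings the coefficient back to $m$. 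What remains is $\phi(c^0,a_n)\otimes\phi(c^1,a_0)\otimes\cdots\otimes\phi(c^n,a_{n-1})\otimes m$, which is exactly $\alpha_n\bigl(\tau_n(a_0\otimes\cdots\otimes a_n)\bigr)$ evaluated on $c^0\otimes\cdots\otimes c^n\otimes m$. Having checked commutation with $\partial_0$, $\sigma_0$ and $\tau_n$, we conclude that $\alpha_\bullet$ is a morphism of para-cyclic $k$-modules, proving the proposition; apart from the ordering of coaction, antipode and equivariance in this last computation, all the verifications reduce to Lemma~\ref{Pairing}.
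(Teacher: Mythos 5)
Your proposal is correct and follows essentially the same route as the paper: $H$-linearity from equivariance, then verification on the generators $\partial_0$, $\sigma_0$, $\tau_n$, with the first two reduced to the multiplicativity and unitality of $\phi$ from Lemma~\ref{Pairing}. Your $\tau_n$ computation is the paper's identity read in the opposite direction (the paper inserts $S^{-1}(m_{(-1)})m_{(-2)}$ into $\alpha_n(\tau_n(a_0\otimes\cdots\otimes a_n))$, you collapse it via equivariance and $S^{-1}(h_{(2)})h_{(1)}=\varepsilon(h)1$), so the two arguments coincide in substance.
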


\begin{proof}
  It is easy to observe that $\alpha_n$ is $H$-linear since the action
  of $C$ on $A$ is equivariant.  Let us check if $\alpha_\bullet$ now
  defines a morphism of para-cyclic modules:
  \begin{align}
    \alpha_{n-1}(\partial^A_0(a_0\otimes\cdots\otimes a_n)) 
      & (c^0\otimes\cdots\otimes c^{n-1}\otimes m)\nonumber\\
    = & \phi(c^0,a_0a_1)\otimes \phi(c^1,a_2)\otimes\cdots\otimes
           \phi(c^{n-1},a_n)\otimes m\nonumber\\
    = & \phi(c^0_{(1)},a_0)\phi(c^0_{(2)},a_1)\otimes \phi(c^1,a_2)
           \otimes\cdots\otimes\phi(c^{n-1},a_n)\otimes m\label{faces}\\
    = & \partial^{(A,M)}_0\alpha_n(a_0\otimes\cdots\otimes a_n)
        \partial^{(C,M)}_0(c_0\otimes\cdots\otimes c^{n-1}\otimes m)\nonumber
  \end{align}
  \begin{align}
    \alpha_{n+1}(\sigma^A_0(a_0\otimes\cdots\otimes a_n)) 
      & (c^0\otimes\cdots\otimes c^{n+1}\otimes m)\nonumber\\
     = & \phi(c^0,a_0)\otimes \varepsilon(c_1)1_A\otimes\phi(c^2,a_1)
         \otimes\cdots\otimes \phi(c^{n+1},a_n)\otimes m\label{degeneracy}\\
     = & \sigma^{(A,M)}_0\alpha_{n+1}(a_0\otimes\cdots\otimes a_n)
       \sigma^{(C,M)}_0(c^0\otimes\cdots\otimes c^{n+1}\otimes m)\nonumber
  \end{align}
  \begin{align}
    \alpha_n(\tau_n(a_0\otimes\cdots\otimes a_n))
      & (c_0\otimes\cdots\otimes c_n\otimes m)\nonumber\\
     = & \phi(c^0,a_n)\otimes \phi(c^1,a_0)\otimes\cdots
         \otimes\phi(c^n,a_{n-1})\otimes m\nonumber\\
     = & S^{-1}(m_{(-1)}) \phi(m_{(-2)} c^0,a_n)\otimes \phi(c^1,a_0)
         \otimes\cdots\otimes\phi(c^n,a_{n-1})\otimes m_{(0)}\label{cyclic}\\
     = & \tau_{n,(A,M)}\big(
         \phi(c^1,a_0)\otimes\cdots\otimes \phi(c^n,a_{n-1})
          \otimes \phi(m_{(-1)} c^0,a_n)\otimes m_{(0)}\big)\nonumber\\
     = & \tau_{n,(A,M)}\alpha_n(a_0\otimes\cdots\otimes a_n)\tau_{(C,M)}
         (c_0\otimes\cdots\otimes c_n\otimes m)\nonumber
  \end{align}
for any $c^i\in C$, $a_i\in A$, $m\in M$.
\end{proof}

\begin{rem}

Now, we will recall few relevant definitions from
\cite{Kaygun:BivariantHopf}.

Let $J_\bullet(C,M)$ be the smallest para-cocyclic $k$-submodule and
graded $H$-submodule (but not necessarily the para-cocyclic
$H$-subcomodule) of $T_\bullet(C,M)$ generated by elements of the form
$[L_h,\tau_n^i](\Psi) + (\tau_n^{n+1}-id_n)(\Phi)$ where $\Psi,\Phi\in
T_n(C,M)$, $i\in\B{Z}$ and $L_h$ is the graded $k$-module endomorphism
of $T_\bullet(C,M)$ coming from the left diagonal action of $h\in H$
on $T_n(C,M)$ for each $n\geq 0$.  One can similarly define
$J_\bullet(A,M)$.  

We define $Q_\bullet(C,M) := T_\bullet(C,M)/J_\bullet(C,M)$.  One can
see that $Q_\bullet(C,M)$ is a cocyclic $H$-module.  Similarly
$Q_\bullet(A,M):= T_\bullet(A,M)/J_\bullet(A,M)$ is a cyclic
$H$-module.  This cocyclic (resp.  cyclic) $H$-module is called the
$H$-equivariant cocyclic (resp. cyclic) module of the pair $(C,M)$
(resp. $(A,M)$).  The cyclic cohomology of the (co)cyclic $H$-modules
$Q_\bullet(C,M)$ and $Q_\bullet(A,M)$ will be denoted by $HC_H^*(C,M)$
and $HC_H^*(A,M)$ respectively.

We define $C_\bullet(C,M) := k\otimes_H Q_\bullet(C,M)$.  One can see
that $C_\bullet(C,M)$ is a cocyclic $k$-module.  Similarly
$C_\bullet(A,M):= k\otimes_H Q_\bullet(A,M)$ is a cyclic $k$-module.
This cocyclic (resp.  cyclic) $k$-module is called the Hopf-cocyclic
(resp. Hopf-cyclic) module of the pair $(C,M)$ (resp. $(A,M)$).  The
cyclic cohomology of the (co)cyclic $k$-modules $C_\bullet(C,M)$ and
$C_\bullet(A,M)$ will be denoted by $HC_{\rm Hopf}^*(C,M)$ and
$HC_{\rm Hopf}^*(A,M)$ respectively.

\end{rem}

\begin{lem}~\label{Restriction}
  For any $n\geq 0$ and $a_i\in A$ for $0\leq i\leq n$ the restriction
  of the $H$-linear morphism $\alpha_n(a_0\otimes\cdots\otimes a_n)$
  in ${\rm Hom}_k(T_n(A,M),T_n(C,M))$ to $J_n(C,M)$ is a morphism in
  ${\rm Hom}_H(J_n(C,M),J_n(A,M))$.
\end{lem}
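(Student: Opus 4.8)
The plan is to establish the inclusion $\alpha_n(a_0\otimes\cdots\otimes a_n)\bigl(J_n(C,M)\bigr)\subseteq J_n(A,M)$ by testing it on generators; once this is done, the assertion that the restriction lies in ${\rm Hom}_H\bigl(J_n(C,M),J_n(A,M)\bigr)$ is automatic, because $\alpha_n(a_0\otimes\cdots\otimes a_n)$ is already $H$-linear on the whole of $T_n(C,M)$ by Proposition~\ref{Lift} and $J_n(C,M)$ and $J_n(A,M)$ are graded $H$-submodules. I shall use (cf.\ \cite{Kaygun:BivariantHopf}) that $J_n(C,M)$ is exactly the $k$-submodule of $T_n(C,M)$ spanned by the elements $[L_h,\tau_{(C,M)}^i](\Psi)$ and $(\tau_{(C,M)}^{n+1}-id_n)(\Phi)$ with $h\in H$, $i\in\B{Z}$, $\Psi,\Phi\in T_n(C,M)$, and that $J_n(A,M)$ is spanned by the analogous elements on the $A$-side; so, by $k$-linearity, it is enough to treat these two kinds of elements.

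Write $f:=\alpha_n(a_0\otimes\cdots\otimes a_n)$, viewed as a map $T_n(C,M)\to T_n(A,M)$ as in Proposition~\ref{Lift}. Two facts from Proposition~\ref{Lift} do the work. First, $f$ is $H$-linear. Second, the ``cyclic'' computation in its proof, read as an identity of operators, says $\alpha_n\bigl(\tau_n(a_0\otimes\cdots\otimes a_n)\bigr)=\tau_{(A,M)}\circ f\circ\tau_{(C,M)}$, i.e.\ $f\circ\tau_{(C,M)}=\tau_{(A,M)}^{-1}\circ\alpha_n\bigl(\tau_n(a_0\otimes\cdots\otimes a_n)\bigr)$; since the para-cyclic operators are invertible, iterating yields $f\circ\tau_{(C,M)}^{\,i}=\tau_{(A,M)}^{-i}\circ\alpha_n\bigl(\tau_n^{\,i}(a_0\otimes\cdots\otimes a_n)\bigr)$ for every $i\in\B{Z}$, where by Proposition~\ref{Lift} the map $g:=\alpha_n\bigl(\tau_n^{\,i}(a_0\otimes\cdots\otimes a_n)\bigr)$ is again $H$-linear. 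A generator of the first kind is then sent to
\begin{align*}
  f\bigl([L_h,\tau_{(C,M)}^i](\Psi)\bigr)
    &= L_h\,f\bigl(\tau_{(C,M)}^{\,i}\Psi\bigr)-f\bigl(\tau_{(C,M)}^{\,i}(L_h\Psi)\bigr)\\
    &= L_h\,\tau_{(A,M)}^{-i}\,g(\Psi)-\tau_{(A,M)}^{-i}\,L_h\,g(\Psi)
     = [L_h,\tau_{(A,M)}^{-i}]\bigl(g(\Psi)\bigr),
\end{align*}
a first-kind generator of $J_n(A,M)$ (the index $-i$ again ranges over $\B{Z}$). For a generator of the second kind one uses the point on which everything hinges: $Cyc_\bullet(A)$ is a genuine cyclic module, so $\tau_n^{\,n+1}(a_0\otimes\cdots\otimes a_n)=a_0\otimes\cdots\otimes a_n$ and therefore $f\circ\tau_{(C,M)}^{\,n+1}=\tau_{(A,M)}^{-(n+1)}\circ f$; consequently
\begin{align*}
  f\bigl((\tau_{(C,M)}^{n+1}-id_n)(\Phi)\bigr)
    &= \bigl(\tau_{(A,M)}^{-(n+1)}-id_n\bigr)\bigl(f(\Phi)\bigr)\\
    &= -\,\tau_{(A,M)}^{-(n+1)}\Bigl((\tau_{(A,M)}^{\,n+1}-id_n)\bigl(f(\Phi)\bigr)\Bigr),
\end{align*}
which lies in $J_n(A,M)$, since $(\tau_{(A,M)}^{\,n+1}-id_n)\bigl(f(\Phi)\bigr)$ is a second-kind generator and $J_n(A,M)$, being a para-cyclic submodule, is stable under $\tau_{(A,M)}^{\pm1}$. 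Hence $f$ carries every generator of $J_n(C,M)$ into $J_n(A,M)$, which proves the inclusion.

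Almost all of this is bookkeeping with powers of the cyclic operators; the one genuinely load-bearing point is the asymmetry just used --- on $Cyc_\bullet(A)$ the operator $\tau^{n+1}$ is the identity, so a second-kind generator of $J_n(C,M)$ is sent into a second-kind generator of $J_n(A,M)$ and nothing spills over, whereas on $T_\bullet(A,M)$ the operator $\tau^{n+1}$ is \emph{not} the identity, which is exactly the reason $J_\bullet(A,M)$ has to be there. I also rely on the degreewise description of $J_\bullet$ (cf.\ \cite{Kaygun:BivariantHopf}) in order to reduce the claim to generators; working directly from the definition of $J_\bullet$ as the smallest para-(co)cyclic $k$-submodule and graded $H$-submodule containing those elements, one would additionally have to follow their images under the coface and codegeneracy maps, and it is precisely that degreewise description which keeps those images under control.
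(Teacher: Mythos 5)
Your proposal is correct and follows essentially the same route as the paper: both reduce to the degree-$n$ generators of $J_n(C,M)$ and rest everything on the $H$-linearity of $\alpha_n(a_0\otimes\cdots\otimes a_n)$ together with the single identity $\alpha_n(a_0\otimes\cdots\otimes a_n)\circ\tau_{(C,M)}=\tau_{(A,M)}^{-1}\circ\alpha_n(\tau_n(a_0\otimes\cdots\otimes a_n))$, which is exactly the computation the paper displays. You merely make explicit the bookkeeping the paper leaves implicit (iterating to powers $\tau^{i}$, the commutator computation, and the use of $\tau^{n+1}=id$ on $Cyc_\bullet(A)$ for the second-kind generators), which is a faithful elaboration rather than a different argument.
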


\begin{proof}
  It is sufficient to prove that for every $n\geq 0$, $a_i\in A$ with
  $0\leq i\leq n$ the morphism $\alpha_n(a_0\otimes\cdots\otimes a_n)$
  maps the elements of the form $[L_h,\tau_n^i](\Psi) +
  (\tau_n^{n+1}-id_n)(\Phi)$ to elements of the same form.  We know
  that each $\alpha_n(a_0\otimes\cdots\otimes a_n)$ is $H$-linear.
  So, we can reduce the proof to verification of the following string
  of equalities:
  \begin{align*}
    \alpha_n(a_0\otimes\cdots\otimes a_n) & 
    \tau_{n,(C,M)}(c_0\otimes\cdots\otimes c_n\otimes m)\\
     = & \alpha_n(a_0\otimes\cdots\otimes a_n) 
         (c^1\otimes\cdots\otimes c^n\otimes m_{(-1)}c^0\otimes m_{(0)})\\
     = & \phi(c^1,a_0)\otimes\cdots\otimes\phi(c^n,a_{n-1})
         \otimes m_{(-1)}\phi(c^0,a_n)\otimes m_{(0)}\\
     = & \tau_{n,(A,M)}^{-1}(\phi(c^0,a_n)\otimes\phi(c^1,a_0)\otimes\cdots\otimes \phi(c^n,a_{n-1})\otimes m)
  \end{align*}
  for $m\in M$, $a_i\in A$, $c^i\in C$ with $0\leq i\leq n$.
\end{proof}

\begin{prop}\label{UniversalClass}
  $\alpha_\bullet$ can be extended to morphism of cyclic $k$-modules of the form
  \[ \alpha_\bullet\colon Cyc_\bullet(A)\to diag {\rm Hom}_H(Q_\bullet(C,M),Q_\bullet(A,M)) 
     \quad\text{and}\quad 
     \alpha_\bullet\colon Cyc_\bullet(A)\to diag {\rm Hom}_k(C_\bullet(C,M),C_\bullet(A,M)) 
  \]
\end{prop}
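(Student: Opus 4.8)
The plan is to bootstrap from Proposition~\ref{Lift}, which already gives a morphism of para-cyclic $k$-modules $\alpha_\bullet\colon Cyc_\bullet(A)\to diag\,{\rm Hom}_H(T_\bullet(C,M),T_\bullet(A,M))$, together with Lemma~\ref{Restriction}, which says each $\alpha_n(a_0\otimes\cdots\otimes a_n)$ carries $J_n(C,M)$ into $J_n(A,M)$. The first step is to observe that a $k$-linear map $T_n(C,M)\to T_n(A,M)$ sending $J_n(C,M)$ into $J_n(A,M)$ descends to a well-defined $k$-linear map on the quotients $Q_n(C,M)=T_n(C,M)/J_n(C,M)\to Q_n(A,M)=T_n(A,M)/J_n(A,M)$; moreover if the original map is $H$-linear so is the induced one, since the $H$-actions on $Q_\bullet(C,M)$ and $Q_\bullet(A,M)$ are the ones inherited from $T_\bullet$. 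This produces, for each $n$, a map $\alpha_n(a_0\otimes\cdots\otimes a_n)\in{\rm Hom}_H(Q_n(C,M),Q_n(A,M))$, hence a well-defined graded map $Cyc_\bullet(A)\to diag\,{\rm Hom}_H(Q_\bullet(C,M),Q_\bullet(A,M))$.

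Next I would check that this induced map is a morphism of \emph{cyclic} $k$-modules, not merely para-cyclic. Compatibility with faces, degeneracies, and the cyclic operator $\tau$ is automatic because these were already verified at the level of $T_\bullet$ in the proof of Proposition~\ref{Lift} (equations~\eqref{faces}, \eqref{degeneracy}, \eqref{cyclic}) and the structure maps on $Q_\bullet$ are induced from those on $T_\bullet$; the passage to the quotient cannot destroy identities that held upstairs. The only genuinely new point is that $Q_\bullet(C,M)$ is an honest cocyclic $H$-module and $Q_\bullet(A,M)$ an honest cyclic $H$-module (i.e. $\tau_n^{n+1}=\mathrm{id}$), which is precisely the reason $J_\bullet$ was defined to include the elements $(\tau_n^{n+1}-\mathrm{id}_n)(\Phi)$; consequently $diag\,{\rm Hom}_H(Q_\bullet(C,M),Q_\bullet(A,M))$ is a genuine cyclic $k$-module and the para-cyclic morphism $\alpha_\bullet$ becomes an ordinary morphism of cyclic modules. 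So the first assertion follows by combining Proposition~\ref{Lift}, Lemma~\ref{Restriction}, and this descent-to-quotient argument.

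For the second assertion one applies the exact functor $k\otimes_H(-)$. By definition $C_\bullet(C,M)=k\otimes_H Q_\bullet(C,M)$ and $C_\bullet(A,M)=k\otimes_H Q_\bullet(A,M)$. Since each $\alpha_n(a_0\otimes\cdots\otimes a_n)$ is $H$-linear from $Q_n(C,M)$ to $Q_n(A,M)$, it induces $1_k\otimes_H\alpha_n(a_0\otimes\cdots\otimes a_n)\colon C_n(C,M)\to C_n(A,M)$, and this assignment is functorial and compatible with the cyclic structure maps because those on $C_\bullet$ are obtained by applying $k\otimes_H(-)$ to the ones on $Q_\bullet$. Hence $\alpha_\bullet$ descends to a morphism of cyclic $k$-modules $Cyc_\bullet(A)\to diag\,{\rm Hom}_k(C_\bullet(C,M),C_\bullet(A,M))$, where the target is now ${\rm Hom}_k$ rather than ${\rm Hom}_H$ because the $H$-actions have been coinvariantly quotiented away.

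The main obstacle, such as it is, is bookkeeping rather than substance: one must make sure that the induced maps on quotients are \emph{well-defined}, i.e. independent of the chosen lifts, and that ``induced by a quotient/coinvariants functor'' genuinely commutes with forming $diag\,{\rm Hom}$ in each degree and with all three families of structure morphisms simultaneously. A minor subtlety worth flagging is that $J_\bullet(C,M)$ is an $H$-submodule but not an $H$-\emph{sub}comodule, so in verifying compatibility with $\tau_n$ one should use the explicit formulas on $T_\bullet$ and reduce modulo $J_\bullet$ only at the end, exactly as in Proposition~\ref{Lift} and Lemma~\ref{Restriction}; attempting to argue comodule-theoretically on $Q_\bullet$ directly would be circular. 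Once these checks are in place the proposition is immediate.
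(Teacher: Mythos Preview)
Your proposal is correct and follows essentially the same approach as the paper: the paper's two-sentence proof simply says that the first assertion follows from Lemma~\ref{Restriction} and the second from functoriality of $k\otimes_H(\,\cdot\,)$, and you have carefully unpacked exactly these two steps, including the passage from para-cyclic to cyclic via the definition of $J_\bullet$.
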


\begin{proof}
  The first part of the statement follows from
  Lemma~\ref{Restriction}.  The second part follows from the fact that
  $k\otimes_H(\ \cdot\ )$ is a functor from the category of left
  $H$-modules to the category of $k$-modules.
\end{proof}

\begin{thm}\label{HopfCupProduct}
  The equivariant action of an $H$-module coalgebra $C$ on an
  $H$-module algebra $A$ induces a pairing of the form
  \begin{align*}
    \smile\colon HC^p_{\rm Hopf}(C,M)\otimes HC^q_{\rm Hopf}(A,M)\to HC^{p+q}(A)
  \end{align*}
  for any $p,q\geq 0$.
\end{thm}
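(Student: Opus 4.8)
The plan is to combine the cyclic $k$-module map $\alpha_\bullet\colon Cyc_\bullet(A)\to diag{\rm Hom}_k(C_\bullet(C,M),C_\bullet(A,M))$ from Proposition~\ref{UniversalClass} with the derived-functor interpretation of Hopf-cyclic cohomology and the Yoneda product on Ext-groups. Recall that $HC^*_{\rm Hopf}(C,M)$ and $HC^*_{\rm Hopf}(A,M)$ are the cyclic cohomology of the (co)cyclic $k$-modules $C_\bullet(C,M)$ and $C_\bullet(A,M)$, which by the results of~\cite{Kaygun:BivariantHopf} can be computed as ${\rm Ext}^*_\Lambda(k_\bullet^\vee, C_\bullet(C,M)^\vee)$ and ${\rm Ext}^*_\Lambda(k_\bullet^\vee, C_\bullet(A,M))$ in Connes' category of cyclic modules (using the cyclic dual of the cocyclic module $C_\bullet(C,M)$), and similarly $HC^{p+q}(A)={\rm Ext}^{p+q}_\Lambda(k_\bullet^\vee,Cyc_\bullet(A))$.

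The key steps, in order, are: (i) interpret a class in $HC^p_{\rm Hopf}(C,M)$ as a $p$-fold extension, i.e. an element of ${\rm Ext}^p_\Lambda(k_\bullet^\vee, C_\bullet(C,M)^\vee)$, via Yoneda~\cite{Yoneda:ExtGroups}; (ii) observe that $\alpha_\bullet$ induces, for each fixed degree, a pairing $C_\bullet(C,M)^\vee \otimes Cyc_\bullet(A)\to C_\bullet(A,M)$ of cyclic modules — more precisely, transposing $\alpha_\bullet$ gives a morphism of cyclic modules $Cyc_\bullet(A)\to diag{\rm Hom}_k(C_\bullet(C,M),C_\bullet(A,M))$, and dualizing the $C$-slot turns this into a cyclic-module map out of $C_\bullet(C,M)^\vee\otimes Cyc_\bullet(A)$; (iii) given a $p$-extension $\xi$ representing a class in $HC^p_{\rm Hopf}(C,M)$ and a $q$-extension $\zeta$ representing a class in $HC^q_{\rm Hopf}(A,M)$, i.e. $\zeta\in{\rm Ext}^q_\Lambda(k_\bullet^\vee, C_\bullet(A,M))$, form the composite: first use $\xi$ to produce a $p$-extension of $k_\bullet^\vee$ by $C_\bullet(C,M)^\vee$, then apply the cyclic map from (ii) to splice it against $Cyc_\bullet(A)$ and obtain a $p$-extension of $Cyc_\bullet(A)$ by $C_\bullet(A,M)$ (here one uses that $k_\bullet^\vee\otimes Cyc_\bullet(A)$ is computed against $Cyc_\bullet(A)$ after the appropriate identification, exploiting that $k_\bullet^\vee$ is the monoidal unit for the relevant product of cyclic modules); finally Yoneda-compose with $\zeta$ to land in ${\rm Ext}^{p+q}_\Lambda(k_\bullet^\vee, Cyc_\bullet(A))=HC^{p+q}(A)$. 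Bilinearity and well-definedness (independence of the chosen extension representatives) follow from the standard functoriality of Yoneda splicing.

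The main obstacle I expect is step (ii)–(iii): making precise the ``splicing against $Cyc_\bullet(A)$'' operation at the level of exact sequences of cyclic modules. One must check that tensoring a $p$-fold exact sequence of cyclic $k$-modules by the fixed cyclic module $Cyc_\bullet(A)$ preserves exactness — this requires flatness over $k$, which holds since $k$ is a field and all modules in sight are $k$-vector spaces — and then that applying the degreewise map $\alpha_\bullet$ turns the resulting extension of $C_\bullet(C,M)^\vee\otimes Cyc_\bullet(A)$ into one of $C_\bullet(A,M)$ compatibly with all the cyclic structure maps; the verification that $\alpha_\bullet$ is indeed a morphism of cyclic modules is exactly Propositions~\ref{Lift} and~\ref{UniversalClass}, so the remaining work is purely the homological bookkeeping of transporting a Yoneda extension along a module map and a monoidal-unit identification. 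Once that is set up, the construction of $\smile$ is forced, and associativity/naturality are inherited from the corresponding properties of the Yoneda product.
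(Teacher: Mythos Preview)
Your overall strategy---represent classes by Yoneda extensions, use $\alpha_\bullet$ from Proposition~\ref{UniversalClass}, and splice---matches the paper's, but the implementation has two genuine gaps.

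First, the Ext identifications are reversed. In this paper's conventions, cyclic cohomology of a cyclic module $X_\bullet$ is ${\rm Ext}^*_\Lambda(X_\bullet,k_\bullet^\vee)$, not ${\rm Ext}^*_\Lambda(k_\bullet^\vee,X_\bullet)$. Thus $HC^q_{\rm Hopf}(A,M)={\rm Ext}^q_\Lambda(C_\bullet(A,M),k_\bullet^\vee)$ and $HC^{p+q}(A)={\rm Ext}^{p+q}_\Lambda(Cyc_\bullet(A),k_\bullet^\vee)$, while $HC^p_{\rm Hopf}(C,M)={\rm Ext}^p_\Lambda(k_\bullet,C_\bullet(C,M))$ is computed in \emph{cocyclic} modules with the cocyclic unit $k_\bullet$. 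With your identifications, the final Yoneda composition does not even typecheck: you end up with classes in ${\rm Ext}^p_\Lambda(Cyc_\bullet(A),C_\bullet(A,M))$ and ${\rm Ext}^q_\Lambda(k_\bullet^\vee,C_\bullet(A,M))$, which cannot be spliced.

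Second, and more seriously, step~(ii) fails. Connes' duality $(-)^\vee$ is not the linear dual: it swaps faces and degeneracies. Hence there is \emph{no} evaluation morphism of cyclic modules $C_\bullet(C,M)^\vee\otimes diag{\rm Hom}_k(C_\bullet(C,M),C_\bullet(A,M))\to C_\bullet(A,M)$; the cyclic operators match, but the simplicial ones do not, so the adjunction you invoke does not exist in this category. The paper circumvents this entirely: instead of dualizing, it applies the exact contravariant functor $diag{\rm Hom}_k(-,C_\bullet(A,M))$ directly to the cocyclic extension $\xi$, obtaining a cyclic extension from $diag{\rm Hom}_k(k_\bullet,C_\bullet(A,M))\cong C_\bullet(A,M)$ to $diag{\rm Hom}_k(C_\bullet(C,M),C_\bullet(A,M))$; this is then spliced with $\nu\in{\rm Ext}^q_\Lambda(C_\bullet(A,M),k_\bullet^\vee)$ and pulled back along $\alpha_\bullet$. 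That functorial passage from cocyclic to cyclic extensions is the missing idea in your proposal.
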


\begin{proof}
  First, we observe that
  \[ HC^p_{\rm Hopf}(C,M) := {\rm Ext}^p_{\Lambda} (k_\bullet, C_\bullet(C,M))
     \quad\text{ and }\quad
     HC^q_{\rm Hopf}(A,M) := {\rm Ext}^q_{\Lambda}(C_\bullet(A,M),k_\bullet^\vee)
  \]
  We will use the Yoneda interpretation of
  Ext-groups~\cite{Yoneda:ExtGroups} as developed in \cite[Chapter
  III]{MacLane:Homology}.  Our approach in part is inspired by the use
  of Yoneda Ext-groups in \cite{Nistor:BivariantChernConnes}.  In this
  approach, one can represent any Hopf-cyclic cohomology class $\xi\in
  HC^p_{\rm Hopf}(k_\bullet,C_\bullet(C,M))$ and $\nu\in HC^q_{\rm
    Hopf}(C_\bullet(A,M),k_\bullet^\vee)$ by exact sequences of
  (co)cyclic $k$-modules
  \begin{align*}
    \xi: &\quad 0\xla{}k_\bullet\xla{}Y^1_\bullet\xla{}\cdots\xla{}Y^p_\bullet\xla{}C_\bullet(C,M)\xla{}0\\
    \nu: &\quad 0\xla{}C_\bullet(A,M)\xla{}Z^1_\bullet\xla{}\cdots\xla{}Z^q_\bullet\xla{}k_\bullet^\vee\xla{}0
  \end{align*}
  Since $k$ is a field, the functor $diag{\rm Hom}_k(\ \cdot\ ,
  C_\bullet(A,M))$ from the category of cocyclic $k$-modules to the
  category of cyclic $k$-modules is exact.  Hence we get an exact
  sequence of the form
  \begin{align*}
    diag{\rm Hom}_k(\xi,C_\bullet(A,M)):\quad 
    0\xla{}  diag{\rm Hom}_k(C_\bullet(C,M),C_\bullet(A,M))\xla{}
            diag{\rm Hom}_k(Y^p_\bullet,C_\bullet(A,M))\xla{}\cdots\\ 
            \xla{} diag{\rm Hom}_k(Y^1_\bullet,C_\bullet(A,M))\xla{}
             C_\bullet(A,M)\xla{}0
  \end{align*}
  after observing the fact that $diag{\rm
  Hom}_k(k_\bullet,C_\bullet(A,M))\cong C_\bullet(A,M)$ as cyclic
  $k$-modules.  Now splice the sequences $diag{\rm
  Hom}_k(\xi,C_\bullet(A,M))$ and $\nu$ to get a class in ${\rm
  Ext}^{p+q}_\Lambda(diag{\rm
  Hom}_k(C_\bullet(C,M),C_\bullet(A,M)),k_\bullet^\vee)$.  However, we
  also have a morphism of cyclic $k$-modules $\alpha_\bullet$ 
  constructed in Proposition~\ref{UniversalClass}.  The result follows 
  from the corresponding morphism of Ext-modules 
  \begin{align*}
    {\rm Ext}^{p+q}_\Lambda(\alpha_\bullet,k_\bullet^\vee)\colon
    {\rm  Ext}^{p+q}_\Lambda(diag{\rm Hom}_k(C_\bullet(C,M),C_\bullet(A,M)),k_\bullet^\vee)\to
    {\rm Ext}^{p+q}_\Lambda(Cyc_\bullet(A),k_\bullet^\vee)
  \end{align*}
  and then observing that $HC^*(A) = {\rm
  Ext}^*_\Lambda(Cyc_\bullet(A),k_\bullet^\vee)$.
\end{proof}

If one wishes to write a formula for this pairing, one can write
\begin{align*}
  \xi\smile\nu := {\rm Ext}^{p+q}_\Lambda(\alpha_\bullet,k_\bullet^\vee)\left(diag{\rm Hom}_k(\xi,C_\bullet(A,M))\circ \nu\right)
\end{align*}
for any $\xi\in HC^p_{\rm Hopf}(C,M)$ and $\nu\in HC^q_{\rm
Hopf}(A,M)$ where $\circ$ denotes the Yoneda product in bivariant
cohomology written in the opposite order, i.e.
\[ \circ\colon {\rm Ext}^p_\Lambda(X_\bullet,Y_\bullet)\otimes{\rm Ext}^q_\Lambda(Y_\bullet,Z_\bullet)\to 
   {\rm Ext}^{p+q}_\Lambda(X_\bullet,Z_\bullet)
\]
However, there is one other pairing in the same setting.  One can
define this second pairing by the formula
\begin{align*}
  \nu\smile\xi := {\rm Ext}^{p+q}_\Lambda(\alpha_\bullet,k_\bullet^\vee)
     \left(diag{\rm Hom}_k(C_\bullet(C,M),\nu)\circ diag{\rm Hom}_k(\xi,k_\bullet^\vee)\right)
\end{align*}
Below, we give an alternative construction for these pairings we gave
above and prove that they actually are the same up to a sign.

\begin{prop}
  Assume that an $H$-module coalgebra $C$ acts on an $H$-module
  algebra $A$ equivariantly.  Then for any $\xi\in HC^p_{\rm
  Hopf}(C,M)$ and $\nu\in HC^q_{\rm Hopf}(A,M)$ one has $\xi\smile\nu =
  (-1)^{pq}\nu\smile\xi$.
\end{prop}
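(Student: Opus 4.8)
The plan is to recognise $\xi\smile\nu$ and $\nu\smile\xi$ as the images, under the single additive homomorphism ${\rm Ext}^{p+q}_\Lambda(\alpha_\bullet,k_\bullet^\vee)$ appearing in Theorem~\ref{HopfCupProduct}, of the two canonical ways of feeding the Yoneda extensions $\xi$ and $\nu$ through the biexact bifunctor $F:=diag{\rm Hom}_k(\ \cdot\ ,\ \cdot\ )$, and then to invoke the classical graded commutativity of these two constructions. Write $X_\bullet=k_\bullet$, $X'_\bullet=C_\bullet(C,M)$, $Y_\bullet=C_\bullet(A,M)$ and $Y'_\bullet=k_\bullet^\vee$, so that $\xi\in{\rm Ext}^p_\Lambda(X_\bullet,X'_\bullet)$ and $\nu\in{\rm Ext}^q_\Lambda(Y_\bullet,Y'_\bullet)$; the object $F(X'_\bullet,Y_\bullet)=diag{\rm Hom}_k(C_\bullet(C,M),C_\bullet(A,M))$ is the one on which $\alpha_\bullet$ is defined.

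First I record that $F$ is biexact: it is exact and contravariant in its first slot --- this is exactly the exactness used in the proof of Theorem~\ref{HopfCupProduct}, valid because $k$ is a field --- and it is exact and covariant in its second slot because ${\rm Hom}_k(V,\ \cdot\ )$ is exact for every $k$-vector space $V$. Hence applying $F(-,Y_\bullet)$ to $\xi$ and $F(X_\bullet,-)$ to $\nu$ yields Yoneda classes $F(\xi,Y_\bullet)\in{\rm Ext}^p_\Lambda(F(X'_\bullet,Y_\bullet),F(X_\bullet,Y_\bullet))$ and $F(X_\bullet,\nu)\in{\rm Ext}^q_\Lambda(F(X_\bullet,Y_\bullet),F(X_\bullet,Y'_\bullet))$, while applying $F(X'_\bullet,-)$ to $\nu$ and $F(-,Y'_\bullet)$ to $\xi$ yields $F(X'_\bullet,\nu)\in{\rm Ext}^q_\Lambda(F(X'_\bullet,Y_\bullet),F(X'_\bullet,Y'_\bullet))$ and $F(\xi,Y'_\bullet)\in{\rm Ext}^p_\Lambda(F(X'_\bullet,Y'_\bullet),F(X_\bullet,Y'_\bullet))$. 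Using the cyclic $k$-module isomorphisms $F(X_\bullet,Y_\bullet)=diag{\rm Hom}_k(k_\bullet,C_\bullet(A,M))\cong C_\bullet(A,M)$ (the identification already used in Theorem~\ref{HopfCupProduct}) and $F(X_\bullet,Y'_\bullet)=diag{\rm Hom}_k(k_\bullet,k_\bullet^\vee)\cong k_\bullet^\vee$ (both $k_\bullet$ and $k_\bullet^\vee$ are the constant cyclic objects on $k$ with every structure map the identity, hence so is $diag{\rm Hom}_k(k_\bullet,k_\bullet^\vee)$), together with the naturality identification of $diag{\rm Hom}_k(k_\bullet,\nu)$ with $\nu$, one checks straight from the definitions that $diag{\rm Hom}_k(\xi,C_\bullet(A,M))\circ\nu=F(\xi,Y_\bullet)\circ F(X_\bullet,\nu)$ and $diag{\rm Hom}_k(C_\bullet(C,M),\nu)\circ diag{\rm Hom}_k(\xi,k_\bullet^\vee)=F(X'_\bullet,\nu)\circ F(\xi,Y'_\bullet)$ in ${\rm Ext}^{p+q}_\Lambda(F(X'_\bullet,Y_\bullet),k_\bullet^\vee)$, where $\circ$ is the Yoneda product written in the order of Theorem~\ref{HopfCupProduct}. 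Thus $\xi\smile\nu$ and $\nu\smile\xi$ are the images of $F(\xi,Y_\bullet)\circ F(X_\bullet,\nu)$ and of $F(X'_\bullet,\nu)\circ F(\xi,Y'_\bullet)$ under ${\rm Ext}^{p+q}_\Lambda(\alpha_\bullet,k_\bullet^\vee)$.

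It therefore suffices to prove the interchange identity $F(\xi,Y_\bullet)\circ F(X_\bullet,\nu)=(-1)^{pq}\,F(X'_\bullet,\nu)\circ F(\xi,Y'_\bullet)$ in ${\rm Ext}^{p+q}_\Lambda(F(X'_\bullet,Y_\bullet),F(X_\bullet,Y'_\bullet))$, since ${\rm Ext}^{p+q}_\Lambda(\alpha_\bullet,k_\bullet^\vee)$ is additive and so commutes with multiplication by $(-1)^{pq}$. I would prove this by the usual reduction. Cutting a Yoneda $p$-extension at its internal spots writes $\xi$ as a Yoneda composite of $p$ short exact sequences, and likewise $\nu$ as a composite of $q$ short exact sequences; because $F(-,Z_\bullet)$ and $F(Z_\bullet,-)$ are exact they carry Yoneda composites to Yoneda composites, so by associativity of the Yoneda product and by naturality the identity for $(\xi,\nu)$ reduces to the case $p=q=1$ applied to each pair of constituent short exact sequences, with the $pq$ factors of $-1$ combining to $(-1)^{pq}$. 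For $p=q=1$, writing $\xi\colon 0\to X'_\bullet\to E_\bullet\to X_\bullet\to 0$ and $\nu\colon 0\to Y'_\bullet\to G_\bullet\to Y_\bullet\to 0$, applying $F$ to both produces a commutative $3\times 3$ array of cyclic $k$-modules with exact rows (obtained from $F(-,Z_\bullet)$ applied to $\xi$) and exact columns (obtained from $F(Z_\bullet,-)$ applied to $\nu$); the two sides of the desired identity are exactly the two $2$-fold extensions of $F(X'_\bullet,Y_\bullet)$ by $F(X_\bullet,Y'_\bullet)$ read off the two boundary paths of this array, and the classical sign lemma for such a ``short exact sequence of short exact sequences'' --- the same computation underlying the graded commutativity of Yoneda and cup products, cf.\ \cite[Ch.~III]{MacLane:Homology} --- says that these two $2$-extensions represent negatives of one another.

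The only genuine work, and the step I expect to demand the most care, is this base case $p=q=1$ together with its sign: one must verify honestly, from the splicing definition of the Yoneda product, that traversing the $3\times 3$ array the two ways gives classes that differ by exactly $-1$, and one must make sure that the ``opposite order'' convention for $\circ$ and the contravariance of $F$ in its first slot do not introduce a hidden extra sign when matching $F(\xi,Y_\bullet)\circ F(X_\bullet,\nu)$ and $F(X'_\bullet,\nu)\circ F(\xi,Y'_\bullet)$ to $\xi\smile\nu$ and $\nu\smile\xi$. Everything else --- the biexactness of $diag{\rm Hom}_k$, the identification of $diag{\rm Hom}_k(k_\bullet,Z_\bullet)$ with $Z_\bullet$, the compatibility of an exact functor with Yoneda splicing, and the inductive tally of signs --- is routine.
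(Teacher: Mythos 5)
Your proposal is correct and follows essentially the same route as the paper: both identify $\xi\smile\nu$ and $\nu\smile\xi$ as the images under ${\rm Ext}^{p+q}_\Lambda(\alpha_\bullet,k_\bullet^\vee)$ of the two classes $\zeta_1=diag{\rm Hom}_k(\xi,C_\bullet(A,M))\circ\nu$ and $\zeta_2=diag{\rm Hom}_k(C_\bullet(C,M),\nu)\circ diag{\rm Hom}_k(\xi,k_\bullet^\vee)$, and both reduce $\zeta_1=(-1)^{pq}\zeta_2$ to the interchange property of the biexact bifunctor $diag{\rm Hom}_k(\,\cdot\,,\,\cdot\,)$ together with the identification $\nu=diag{\rm Hom}_k(k_\bullet,\nu)$. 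The only difference is one of explicitness: the paper states the interchange identity only at the level of morphisms and calls it obvious, leaving the $(-1)^{pq}$ implicit in the standard Yoneda machinery, whereas you spell out the reduction to $p=q=1$ and the $3\times 3$ sign lemma from \cite[Ch.~III]{MacLane:Homology} that actually produces the sign.
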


\begin{proof}
  Since the bi-functor $diag {\rm Hom}_k(\ \cdot\ ,\ \cdot\ )$ is
  exact in both variables, one has well-defined morphisms of the form
  \[ diag{\rm Hom}_k(Z_\bullet,\ \cdot\ )\colon 
     {\rm Ext}^p_\Lambda(X_\bullet,Y_\bullet) \to 
     {\rm Ext}^p_\Lambda(diag{\rm Hom}_k(Z_\bullet,X_\bullet), diag{\rm Hom}_k(Z_\bullet,Y_\bullet))
  \]
  and 
  \[ diag{\rm Hom}_k(\ \cdot\ ,Z_\bullet)\colon 
     {\rm Ext}^p_\Lambda(X_\bullet,Y_\bullet) \to 
     {\rm Ext}^p_\Lambda(diag{\rm Hom}_k(Y_\bullet,Z_\bullet), diag{\rm Hom}_k(X_\bullet,Z_\bullet))
  \]
  for any cocyclic modules $X_\bullet$, $Y_\bullet$ and cyclic module
  $Z_\bullet$.  Since $\xi\in HC^p_{\rm Hopf}(C,M) := {\rm
  Ext}^p_\Lambda(k_\bullet,C_\bullet(C,M))$ and $\nu\in HC^q_{\rm
  Hopf}(A,M) := {\rm Ext}^q_\Lambda(C_\bullet(A,M),k_\bullet^\vee)$
  one has well-defined elements
  \[ \zeta_1 := diag{\rm Hom}_k(\xi,C_\bullet(A,M))\circ \nu \in 
     {\rm Ext}^{p+q}_\Lambda(diag{\rm Hom}(C_\bullet(C,M),C_\bullet(A,M)), k_\bullet^\vee)
  \]
  and 
  \[ \zeta_2 := diag{\rm Hom}_k(C_\bullet(C,M),\nu)\circ diag{\rm Hom}_k(\xi,k_\bullet^\vee) \in 
     {\rm Ext}^{p+q}_\Lambda(diag{\rm Hom}(C_\bullet(C,M),C_\bullet(A,M)), k_\bullet^\vee) 
  \]
  Here we use the opposite composition notation as before.  After
  observing the fact that $\nu = diag {\rm Hom}_k(k_\bullet,\nu)$, the
  proof that one has $\zeta_1 = (-1)^{pq}\zeta_2$ in ${\rm
  Ext}^{p+q}_\Lambda(diag{\rm
  Hom}_k(C_\bullet(C,M),C_\bullet(A,M)),k_\bullet^\vee)$, reduces to
  proving that the bifunctor $diag{\rm Hom}_k(\ \cdot\ ,\ \cdot\ )$
  satisfies the following property
  \[ diag{\rm Hom}_k(a_\bullet,Y'_\bullet)\circ diag{\rm Hom}_k(X_\bullet,b_\bullet)
     = diag{\rm Hom}_k(X'_\bullet,b_\bullet)\circ diag{\rm Hom}_k(a_\bullet,Y_\bullet)
  \]
  for any morphism of cocyclic modules $a_\bullet \colon X'_\bullet\to
  X_\bullet$ and morphism of cyclic modules $b_\bullet\colon
  Y_\bullet\to Y'_\bullet$ which is obvious.  The result follows after
  observing
  \[ \xi\smile\nu := {\rm Ext}^{p+q}_\Lambda(\alpha_\bullet,k_\bullet^\vee)(\zeta_1)
     = {\rm Ext}^{p+q}_\Lambda(\alpha_\bullet,k_\bullet^\vee)((-1)^{pq}\zeta_2)
     =: (-1)^{pq}\nu\smile\xi
  \]
  where $\alpha_\bullet$ is the morphism of cyclic modules we
  constructed in Proposition~\ref{Lift}.
\end{proof}

\begin{thm}\label{Agreement}
  The Connes-Moscovici characteristic map $HC^p_{\rm
  Hopf}(H,k_{(\sigma,\delta)})\to HC^p(A)$ defined in \cite[Section
  VIII, Proposition 1]{ConnesMoscovici:HopfCyclicCohomology} agrees
  with the pairing we defined in Theorem~\ref{HopfCupProduct} for
  $C=H$, $M=k_{(\sigma,\delta)}$ and $q=0$.  Here
  $k_{(\sigma,\delta)}$ denotes the 1-dimensional anti-Yetter-Drinfeld
  module of the module pair in involution $(\sigma,\delta)$.
\end{thm}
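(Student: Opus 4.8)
The plan is to unwind both sides of the claimed equality to completely explicit formulas on the level of (co)cyclic modules and then compare. First I would recall, from \cite{ConnesMoscovici:HopfCyclicCohomology}, the construction of the characteristic map: a closed Hopf-cyclic cochain $\varphi\in HC^p_{\rm Hopf}(H,k_{(\sigma,\delta)})$ (so $\varphi$ is an element of $H^{\otimes p}$ satisfying the appropriate cyclicity and normalization conditions coming from the anti-Yetter--Drinfeld module $k_{(\sigma,\delta)}$) is sent to the ordinary cyclic $p$-cochain on $A$ given by
\begin{align*}
  \gamma_\varphi(a_0\otimes\cdots\otimes a_p) := \varphi^{(1)}(a_0)\,\varphi^{(2)}(a_1)\cdots\varphi^{(p)}(a_p),
\end{align*}
where I write $\varphi=\varphi^{(1)}\otimes\cdots\otimes\varphi^{(p)}$ and use the given action of $H$ on $A$. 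The point of the modular pair in involution is exactly that this formula lands in the cyclic bicomplex of $A$ and respects $b$ and $B$; this is Connes--Moscovici's Proposition~1.

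Next I would specialize the pairing of Theorem~\ref{HopfCupProduct} to $C=H$, $M=k_{(\sigma,\delta)}$, $q=0$. When $q=0$, the class $\nu\in HC^0_{\rm Hopf}(A,M)$ is represented by a morphism of cyclic $k$-modules $C_\bullet(A,M)\to k_\bullet^\vee$, i.e.\ by a trace-like functional, and for $M=k_{(\sigma,\delta)}$ the relevant canonical such $\nu$ is the augmentation. The module coalgebra $C=H$ acting equivariantly on $A$ is the standard action $\phi(h,a)=ha$, which manifestly satisfies the hypotheses of Lemma~\ref{Pairing} and Definition~\ref{EquivariantAction} by the module-algebra axioms recalled at the start of Section~2. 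Under this action the morphism $\alpha_\bullet$ of Proposition~\ref{Lift} is exactly $\alpha_n(a_0\otimes\cdots\otimes a_n)(h^0\otimes\cdots\otimes h^n\otimes m)=h^0a_0\otimes\cdots\otimes h^na_n\otimes m$. Then I would trace through the three operations defining $\xi\smile\nu$ in the formula displayed after Theorem~\ref{HopfCupProduct}: (i) applying $diag{\rm Hom}_k(-,C_\bullet(A,M))$ to the Yoneda representative of $\xi$, (ii) splicing with $\nu$, and (iii) applying ${\rm Ext}^{p}_\Lambda(\alpha_\bullet,k_\bullet^\vee)$. The outcome, read off as a cyclic $p$-cocycle on $A$, should be precisely $\gamma_\varphi$ above.

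The cleanest way to organize the comparison is to avoid the Yoneda picture for $\xi$ and instead use the standard dual statement that ${\rm Ext}^p_\Lambda(k_\bullet,C_\bullet(H,M))$ is computed by the cyclic bicomplex, so a class $\xi$ is represented by an explicit normalized cochain in $C_\bullet(H,M)$; pulling this back along $\alpha_\bullet$ and pairing with the augmentation $\nu$ is then a bare computation using the definitions of the cocyclic structure maps on $T_\bullet(H,M)$ recalled in the excerpt (note $T_n(H,M)=H^{\otimes n+1}\otimes M$, with $\tau_n$ involving $S^{-1}$ and the comodule structure). One checks that the face/degeneracy/cyclic operators of $C_\bullet(H,k_{(\sigma,\delta)})$ are exactly the ones Connes--Moscovici use (after the usual identification $H^{\otimes n+1}\otimes k_{(\sigma,\delta)}\cong H^{\otimes n}$ by the counit in the first slot), so that the cochain-level map induced by $\alpha_\bullet$ coincides with $\varphi\mapsto\gamma_\varphi$ on the nose, hence the same on cohomology.

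The main obstacle I anticipate is bookkeeping: reconciling the two conventions. Connes--Moscovici work with a particular normalized cyclic complex for $H$ and a particular sign/orientation for the characteristic map, whereas the present paper's pairing is built from para-cyclic cover complexes, a quotient $Q_\bullet$, a further quotient $C_\bullet=k\otimes_H Q_\bullet$, and Yoneda splicing, each of which can introduce a sign or a shift. So the real content of the proof is (a) identifying the canonical degree-zero class $\nu$ for the anti-Yetter--Drinfeld coefficient $k_{(\sigma,\delta)}$ with the Connes--Moscovici trace/augmentation, and (b) verifying that the composite cochain map $\alpha_\bullet^*$ followed by evaluation against $\nu$ reproduces the Connes--Moscovici formula including signs; the isomorphism $diag{\rm Hom}_k(k_\bullet,C_\bullet(A,M))\cong C_\bullet(A,M)$ used in Theorem~\ref{HopfCupProduct} is what makes the $q=0$ case degenerate to a genuine characteristic map rather than a bivariant pairing, and pinning down that identification carefully is where the argument must be done with care. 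Everything else is a routine, if somewhat lengthy, unravelling of definitions.
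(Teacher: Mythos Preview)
Your overall strategy---unwind both sides explicitly and compare via $\alpha_\bullet$---is exactly the paper's, and the key observation that $\gamma_\bullet$ factors as the composite of $\alpha_\bullet$ with the degree-zero class is the heart of the argument. However, there is a genuine gap in your identification of the degree-zero input. There is no canonical ``augmentation'' playing the role of $\nu\in HC^0_{\rm Hopf}(A,k_{(\sigma,\delta)})$; the Connes--Moscovici characteristic map is defined relative to a chosen \emph{invariant trace} $\tau\colon A\to k$ satisfying $\tau(h(a)a')=\tau(a\,S(h_{(1)})(a')\delta(h_{(2)}))$ and $\tau(h(a))=\varepsilon(h)\tau(a)$. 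The paper's point is that these conditions on $\tau$ are precisely what make $\tau$ a degree-zero cyclic cocycle in $C_\bullet(A,k_{(\sigma,\delta)})$, i.e.\ a class $\nu=[\tau]\in HC^0_{\rm Hopf}(A,k_{(\sigma,\delta)})$. The agreement statement is then that $\xi\smile[\tau]$ recovers the Connes--Moscovici map associated to that particular $\tau$.

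This also corrects your displayed formula for $\gamma_\varphi$: as written it takes values in $A$, not in $k$, and has an index mismatch ($p+1$ arguments against $p$ tensor factors of $\varphi$). After identifying $C_n(H,k_{(\sigma,\delta)})$ with elements of the form $1\otimes h^1\otimes\cdots\otimes h^n$, the correct map is
\[
  \gamma_n(1\otimes h^1\otimes\cdots\otimes h^n)(a_0\otimes\cdots\otimes a_n)
  = \tau\bigl(a_0\,h^1(a_1)\cdots h^n(a_n)\bigr),
\]
and the one-line verification is that this equals $\tau\circ\alpha_n(a_0\otimes\cdots\otimes a_n)(1\otimes h^1\otimes\cdots\otimes h^n)$, so $\gamma_\bullet={\rm Hom}_k(\alpha_\bullet,k)$ evaluated at $\tau$, which on cohomology is ${\rm Ext}^*_\Lambda(\alpha_\bullet,k_\bullet^\vee)$ applied to $[\tau]$---precisely the $q=0$ pairing. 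Once you replace ``augmentation'' by ``invariant trace'' and fix the formula accordingly, your sketch becomes the paper's proof; the anticipated sign and convention bookkeeping is in fact trivial here.
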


\begin{proof}
  Since $k_{(\sigma,\delta)}$ is a stable anti-Yetter-Drinfeld module
  (an SAYD module in short), by \cite{Kaygun:BialgebraCyclicK} we know
  that $C_\bullet(H,k_{(\sigma,\delta)})$ is isomorphic to $k\otimes_H
  T_\bullet(H,k_{(\sigma,\delta)})$.  Therefore, one can identify
  $C_\bullet(H,k_{(\sigma,\delta)})$ as the graded $k$-submodule of
  $T_\bullet(H,k_{(\sigma,\delta)})$ which consists of elements the
  form $\sum_i (1\otimes h^1_i\otimes\cdots\otimes h^n_i)$.  The
  Connes-Moscovici characteristic is defined with the help of an
  invariant trace $\tau$ on $A$ which satisfies the following
  condition
  \[ \tau(h(a)a') = \tau(a S(h_{(1)})(a')\delta(h_{(2)})) 
     \quad\text{ and }\quad \tau(h(a)) = \varepsilon(h)\tau(a)
  \]
  for any $a,a'\in A$ and $h\in H$.  This is equivalent to $\tau\in
  {\rm Hom}_k(A\otimes k_{(\sigma,\delta)},k)$ being a cyclic cochain
  in degree $0$ for the cyclic module
  $C_\bullet(A,k_{(\sigma,\delta)})$.  The characteristic map is
  defined as
  \[ \gamma_\bullet(1\otimes h^1\otimes\cdots\otimes h^n)(a_0\otimes\cdots\otimes a^n) 
     = \tau(a_0 h^1(a_1)\cdots h^n(a_n)) 
  \]
  for any $1\otimes h^1\otimes\cdots\otimes h^n\in
  C_n(H,k_{(\sigma,\delta)})$ and $a_0\otimes\cdots\otimes a_n\in
  Cyc_n(A)$.  Now observe that one can write $\gamma_\bullet$ as a
  composition $\gamma_\bullet = \tau\circ\alpha_\bullet$ where
  $\alpha_\bullet$ is defined in the proof of Proposition~\ref{Lift}.
  This means $\gamma_\bullet$ is the morphism
  \[ {\rm Hom}_k(\alpha_\bullet,k)\colon {\rm Hom}_k(diag{\rm Hom}_k(C_\bullet(H,M),C_\bullet(A,M)),k)
     \to {\rm Hom}_k(Cyc_\bullet(A),k)
  \]
  where $M = k_{(\sigma,\delta)}$.  Then ${\rm
  Hom}_k(\alpha_\bullet,k)$ induces the morphism ${\rm
  Ext}^*_\Lambda(\alpha_\bullet,k_\bullet^\vee)$ on cohomology which
  is used in the proof of Theorem~\ref{HopfCupProduct} to define the
  pairing.  The result follows.
\end{proof}

One can extend the pairing we defined above to the equivariant cyclic
cohomology groups as follows:

\begin{thm}\label{EquivariantProduct}
  The equivariant action of an $H$-module coalgebra $C$ on an
  $H$-module algebra $A$ induces a pairing of the form
  \begin{align*}
    \smile\colon HC^p_H(C,M)\otimes HC^q_H(A,M)\to 
     \bigoplus_{r=0}^{p+q} HC^{p+q-r}(A)\otimes {\rm Ext}^r_H(k,k)
  \end{align*}
  for any $p,q\geq 0$.
\end{thm}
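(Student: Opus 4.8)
The plan is to run the argument of Theorem~\ref{HopfCupProduct} inside the category $\Lambda_H$ of cyclic $H$-modules in place of the category $\Lambda$ of cyclic $k$-modules, keeping careful track of the non-exactness of $diag{\rm Hom}_H(\ \cdot\ ,\ \cdot\ )$, which is precisely what produces the ${\rm Ext}^r_H(k,k)$ factors. First I would recall from \cite{Kaygun:BivariantHopf} the derived functor descriptions
\[ HC^p_H(C,M) = {\rm Ext}^p_{\Lambda_H}(k_\bullet, Q_\bullet(C,M)), \qquad HC^q_H(A,M) = {\rm Ext}^q_{\Lambda_H}(Q_\bullet(A,M), k_\bullet^\vee), \]
where $k_\bullet$ and $k_\bullet^\vee$ carry the trivial $H$-action, together with $HC^n(A) = {\rm Ext}^n_\Lambda(Cyc_\bullet(A), k_\bullet^\vee)$. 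Then represent $\xi\in HC^p_H(C,M)$ and $\nu\in HC^q_H(A,M)$ by Yoneda extensions of cyclic $H$-modules
\begin{align*}
  \xi:&\quad 0\xla{}k_\bullet\xla{}Y^1_\bullet\xla{}\cdots\xla{}Y^p_\bullet\xla{}Q_\bullet(C,M)\xla{}0\\
  \nu:&\quad 0\xla{}Q_\bullet(A,M)\xla{}Z^1_\bullet\xla{}\cdots\xla{}Z^q_\bullet\xla{}k_\bullet^\vee\xla{}0
\end{align*}
and recall from Proposition~\ref{UniversalClass} the morphism of cyclic $k$-modules $\alpha_\bullet\colon Cyc_\bullet(A)\to diag{\rm Hom}_H(Q_\bullet(C,M),Q_\bullet(A,M))$.

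The step where the equivariant case genuinely diverges from Theorem~\ref{HopfCupProduct} is the application of $diag{\rm Hom}_H$ to $\xi$ (equivalently, of $diag{\rm Hom}_H(\ \cdot\ ,k_\bullet^\vee)$ combined with $diag{\rm Hom}_H(Q_\bullet(C,M),\nu)$, in the style of the ``second pairing'' following Theorem~\ref{HopfCupProduct}): because $diag{\rm Hom}_H$ is only left exact, the resulting diagram of cyclic $k$-modules is no longer an exact sequence, and its deviation from exactness at the $k_\bullet$ (resp. $k_\bullet^\vee$) end --- where the $H$-action is trivial --- is measured exactly by ${\rm Ext}^r_H(k,k)$. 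I would therefore replace the naive splicing used in Theorem~\ref{HopfCupProduct} by its hyper-Ext analogue, computed by first passing to an honest $\Lambda_H$-projective resolution. Splicing with $\nu$ via the Yoneda product in $\Lambda_H$ and pushing forward along $\alpha_\bullet$ then yields a class in a bivariant hyper-Ext group of $Cyc_\bullet(A)$ against $k_\bullet^\vee$. Since $\alpha_\bullet$ factors through $diag{\rm Hom}_H(Q_\bullet(C,M),Q_\bullet(A,M))$, an object with trivial $H$-action, the $H$-variable in this hyper-Ext group uncouples from the cyclic variable; and because $k$ is a field, a Künneth argument identifies the group with $\bigoplus_{r=0}^{p+q}{\rm Ext}^{p+q-r}_\Lambda(Cyc_\bullet(A),k_\bullet^\vee)\otimes{\rm Ext}^r_H(k,k)=\bigoplus_{r=0}^{p+q}HC^{p+q-r}(A)\otimes{\rm Ext}^r_H(k,k)$. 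Tracking $\xi$ and $\nu$ through this chain defines $\xi\smile\nu$, and its $r=0$ component reduces to the pairing of Theorem~\ref{HopfCupProduct} upon applying $k\otimes_H(\ \cdot\ )$.

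The step I expect to be the main obstacle is the identification of that hyper-Ext group with the stated direct sum. Concretely one must verify: (i) that the $H$-module ${\rm Ext}^{p+q-r}_\Lambda(Cyc_\bullet(A),k_\bullet^\vee)$ appearing on the coefficient side carries the trivial $H$-action after composition with $\alpha_\bullet$ --- which should follow because $\alpha_\bullet$ takes values in $H$-linear maps and hence factors through $H$-invariants --- so that ${\rm Ext}^r_H(k,-)$ applied to it collapses to ${\rm Ext}^r_H(k,k)\otimes(-)$; and (ii) that the associated Künneth/double-complex spectral sequence degenerates over $k$, which may require mild finiteness of the complexes in each cyclic degree or an explicit homotopy on the cyclic bicomplex. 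Granting these, naturality of the Yoneda product makes $\smile$ independent of the chosen representatives and resolutions, and of whether one uses the ``first'' or the ``second'' form of the construction, in parallel with the Proposition following Theorem~\ref{HopfCupProduct}.
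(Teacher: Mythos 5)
Your skeleton is right --- representing classes by Yoneda extensions of cyclic $H$-modules, pushing forward along $\alpha_\bullet$, and extracting the ${\rm Ext}^r_H(k,k)$ factors at the very end by a K\"unneth-type decomposition --- but the mechanism you propose for producing those factors is not the one that works, and the step you yourself flag as ``the main obstacle'' is a genuine gap rather than a technicality. You attribute the appearance of ${\rm Ext}^r_H(k,k)$ to the non-exactness of $diag{\rm Hom}_H(\ \cdot\ ,\ \cdot\ )$ and propose to repair the broken splicing by a hyper-Ext construction in $\Lambda_H$. As written this does not yield a well-defined class: applying a merely left-exact functor to the extension $\xi$ destroys exactness in the middle as well as at the ends, so the resulting diagram is not a Yoneda representative of anything, and ``splicing'' it with $\nu$ has no meaning until you have chosen a derived-category model, proved that the construction is independent of the representative of $\xi$, and established a K\"unneth isomorphism for the resulting hypercohomology --- none of which you supply, and the last of which is precisely the content you would need to prove.

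The actual argument sidesteps all of this by never applying ${\rm Hom}_H$ at the splicing stage. One endows $diag{\rm Hom}_k(Q_\bullet(C,M),Q_\bullet(A,M))$ with the adjoint action $(hf)(\Psi)=h_{(1)}\cdot f(S(h_{(2)})\cdot\Psi)$, making it a cyclic $H$-module whose $H$-invariants are the $H$-linear maps; since $Cyc_\bullet(A)$ is a trivial $H$-module, $\alpha_\bullet$ is then a morphism of cyclic $H$-modules into this object. Because $k$ is a field, $diag{\rm Hom}_k(\ \cdot\ ,Q_\bullet(A,M))$ is exact, so applying it to $\mu$ gives an honest exact sequence of cyclic $H$-modules, which one splices with $\nu$ to get a class in ${\rm Ext}^{p+q}_{H[\Lambda]}\bigl(diag{\rm Hom}_k(Q_\bullet(C,M),Q_\bullet(A,M)),k_\bullet^\vee\bigr)$ and pulls back along $\alpha_\bullet$ to ${\rm Ext}^{p+q}_{H[\Lambda]}(Cyc_\bullet(A),k_\bullet^\vee)$. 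Only now, and only because the source carries the trivial $H$-action, does one invoke the spectral sequence of \cite[Proposition 3.5]{Kaygun:BivariantHopf} to obtain
\[ {\rm Ext}^{p+q}_{H[\Lambda]}(Cyc_\bullet(A),k_\bullet^\vee)\cong\bigoplus_{r=0}^{p+q}HC^{p+q-r}(A)\otimes{\rm Hom}_k({\rm Tor}^H_r(k,k),k), \]
and the field hypothesis converts ${\rm Hom}_k({\rm Tor}^H_r(k,k),k)$ into ${\rm Ext}^r_H(k,k)$. So the ${\rm Ext}_H$ factors come from a change-of-rings decomposition of the target bivariant group, not from derived functors of $diag{\rm Hom}_H$; if you replace your hyper-Ext detour by this exact-functor-plus-adjoint-action device and cite the existing spectral sequence, your outline closes up.
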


\begin{proof}
  First observe that the morphism of cyclic modules $\alpha_\bullet$
  defined in Proposition~\ref{Lift} can also be considered as a
  morphism of cyclic $H$-modules of the form $\alpha_\bullet\colon
  Cyc_\bullet(A)\to diag{\rm Hom}_k(Q_\bullet(C,M),Q_\bullet(A,M))$.
  Here $Cyc_\bullet(A)$ is considered as a trivial $H$-module and
  $diag{\rm Hom}_k(Q_\bullet(C,M),Q_\bullet(A,M))$ has the following
  $H$-module structure
  \[ (h f)(\Psi):= h_{(1)}\cdot f(S(h_{(2)})\cdot\Psi) \]
  for any $f\in diag{\rm Hom}_k(Q_\bullet(C,M),Q_\bullet(A,M))$, $h\in
  H$ and $\Psi\in Q_\bullet(C,M)$.  Given two cohomology classes
  \begin{align*}
    \mu: &\quad 0\xla{}k_\bullet\xla{}U^1_\bullet\xla{}\cdots\xla{}U^p_\bullet\xla{}Q_\bullet(C,M)\xla{}0\\
    \nu: &\quad 0\xla{}Q_\bullet(A,M)\xla{}V^1_\bullet\xla{}\cdots\xla{}V^q_\bullet\xla{}k_\bullet^\vee\xla{}0
  \end{align*}
  in $HC_H^p(C,M):={\rm Ext}^p_{H[\Lambda]}(k_\bullet,Q_\bullet(C,M))$
  and in $HC_H^p(A,M):={\rm
  Ext}^p_{H[\Lambda]}(Q_\bullet(A,M),k_\bullet)$ respectively, we
  consider the exact sequence of cyclic $H$-modules
  \begin{align*}
    diag{\rm Hom}_k(\mu,Q_\bullet(A,M))\colon 0\xla{}diag{\rm Hom}_k(Q_\bullet(C,M),Q_\bullet(A,M))\xla{}
    diag{\rm Hom}_k(U^p_\bullet,Q_\bullet(A,M))\xla{}\\
    \cdots\xla{}diag{\rm Hom}_k(U^1_\bullet,Q_\bullet(A,M))\xla{}
    Q_\bullet(A,M)\xla{}0
  \end{align*}
  We define a class in ${\rm Ext}^{p+q}_{H[\Lambda]}(diag{\rm Hom}_k
  (Q_\bullet(C,M),Q_\bullet(A,M)),k_\bullet^\vee)$ by splicing
  $diag{\rm Hom}_k(\mu,Q_\bullet(A,M))$ and $\nu$ at $Q_\bullet(A,M)$.
  Then we define $\mu\smile\nu\in {\rm Ext}^{p+q}_{H[\Lambda]}
  (Cyc_\bullet(A),k_\bullet^\vee)$ by using the morphism of cyclic
  $H$-modules $\alpha_\bullet$.  However, the $H$-module structure on
  $Cyc_\bullet(A)$ is trivial.  Therefore, using the first spectral
  sequence constructed in \cite[Proposition
  3.5]{Kaygun:BivariantHopf} we obtain
  \[ {\rm Ext}^{p+q}_{H[\Lambda]}(Cyc_\bullet(A),k_\bullet^\vee)
     \cong \bigoplus_{r=0}^{p+q} HC^{p+q-r}(A)\otimes {\rm Hom}_k({\rm Tor}_r^H(k,k),k)
  \]
  However, since $k$ is a field 
  we have ${\rm Ext}^r_H(U,k) \cong {\rm Hom}_k({\rm Tor}^H_r(U,k),k)$
  for any $r\geq 0$ and any left $H$-module $U$, in particular $U=k$.
\end{proof}

\section{Product (co)algebras}

We will change the notation to distinguish Hopf-cyclic cohomology and
equivariant cyclic cohomology of $H$- and $H\otimes H$-module
algebras.  We will use $Q_\bullet(H;A,M)$ to denote the cyclic
$H$-module associated with the $H$-module algebra $A$ with
coefficients in $M$.  Also, we will use $HC^*_{\rm Hopf}(H;A,M)$ to
denote the Hopf-cyclic cohomology of an $H$-module algebra $A$ with
coefficients in $M$.

\begin{prop}
  Let $A$, $A'$ be two $H$-module algebras and let $M$ and $M'$ be two
  $H$-module/comodules.  Then there is an external product structure
  on the equivariant cyclic cohomology groups
  \[ HC_H^p(A,M)\otimes HC_H^q(A',M')\to HC_{H\otimes H}^{p+q}(A\otimes A',M\otimes M') \]
  and Hopf-cyclic cohomology groups
  \[ HC_{\rm Hopf}^p(H;A,M)\otimes HC_{\rm Hopf}^q(H;A',M')
     \to HC_{\rm Hopf}^{p+q}(H\otimes H;A\otimes A',M\otimes M') 
  \]
  for any $p,q\geq 0$.
\end{prop}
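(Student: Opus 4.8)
The plan is to reduce the statement to an external (K\"unneth-type) product on Yoneda Ext-modules in a category of cyclic modules, in the spirit of Theorem~\ref{HopfCupProduct} and Theorem~\ref{EquivariantProduct}, after first identifying the equivariant (resp.\ Hopf-cyclic) module of a tensor-product algebra with a ``diagonal tensor product'' of the individual ones. For a cyclic $H$-module $X_\bullet$ and a cyclic $H$-module $Y_\bullet$ (or for plain cyclic $k$-modules) write $X_\bullet\boxtimes Y_\bullet$ for the cyclic $H\otimes H$-module (resp.\ cyclic $k$-module) with $(X\boxtimes Y)_n:=X_n\otimes_k Y_n$, with structure morphisms $\partial^X_j\otimes\partial^Y_j$, $\sigma^X_j\otimes\sigma^Y_j$, $\tau^X_n\otimes\tau^Y_n$, and with the diagonal $H\otimes H$-action. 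Since $k$ is a field, levelwise tensor product over $k$ is exact, so $\boxtimes$ is exact in each variable; and the shuffle identification $A^{\otimes n+1}\otimes A'^{\otimes n+1}\cong(A\otimes A')^{\otimes n+1}$ gives $Cyc_\bullet(A)\boxtimes Cyc_\bullet(A')\cong Cyc_\bullet(A\otimes A')$, in particular $k_\bullet^\vee\boxtimes k_\bullet^\vee\cong k_\bullet^\vee$.

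First I would produce the comparison morphism on the cover complexes. Using $1_{A\otimes A'}=1_A\otimes 1_{A'}$, the diagonal $H$-comodule structure on $M\otimes M'$, and $S_{H\otimes H}^{-1}=S_H^{-1}\otimes S_H^{-1}$, the shuffle identification upgrades to an isomorphism of para-cyclic $H\otimes H$-modules $T_\bullet(A\otimes A',M\otimes M')\cong T_\bullet(A,M)\boxtimes T_\bullet(A',M')$ under which the diagonal action $L_{h\otimes h'}$ becomes $L_h\boxtimes L_{h'}$ and $\tau_n$ becomes $\tau_n\boxtimes\tau_n$. From the operator identities
\[
  (\tau^{n+1}\boxtimes\tau^{n+1})-{\rm id}=\big((\tau^{n+1}-{\rm id})\boxtimes\tau^{n+1}\big)+\big({\rm id}\boxtimes(\tau^{n+1}-{\rm id})\big)
\]
and
\[
  [\,L_h\boxtimes L_{h'}\,,\,\tau^i\boxtimes\tau^i\,]=\big([L_h,\tau^i]\boxtimes(L_{h'}\tau^i)\big)+\big((\tau^iL_h)\boxtimes[L_{h'},\tau^i]\big)
\]
one sees that each generator $[L_{h\otimes h'},\tau^i](\Psi)+(\tau^{n+1}-{\rm id})(\Phi)$ of $J_\bullet(A\otimes A',M\otimes M')$ lands in $J_\bullet(A,M)\boxtimes T_\bullet(A',M')+T_\bullet(A,M)\boxtimes J_\bullet(A',M')$. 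Because $J_\bullet(A,M)$ is a para-cyclic $k$-submodule and a graded $H$-submodule of $T_\bullet(A,M)$, this last submodule is itself a para-cyclic $k$-submodule and graded $H\otimes H$-submodule, so it contains $J_\bullet(A\otimes A',M\otimes M')$; and, $\boxtimes$ being biexact, it is exactly the kernel of $T_\bullet(A,M)\boxtimes T_\bullet(A',M')\to Q_\bullet(A,M)\boxtimes Q_\bullet(A',M')$. Hence the shuffle map descends to a natural morphism of cyclic $H\otimes H$-modules
\[
  g_\bullet\colon Q_\bullet(H\otimes H;A\otimes A',M\otimes M')\longrightarrow Q_\bullet(H;A,M)\boxtimes Q_\bullet(H;A',M'),
\]
and applying $k\otimes_{H\otimes H}(\ \cdot\ )$, together with the canonical isomorphism $k\otimes_{H\otimes H}(V\otimes_k V')\cong(k\otimes_H V)\otimes_k(k\otimes_H V')$, yields $\bar g_\bullet\colon C_\bullet(H\otimes H;A\otimes A',M\otimes M')\to C_\bullet(H;A,M)\boxtimes C_\bullet(H;A',M')$.

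Next I would assemble the external product. Because $\boxtimes$ is exact in each variable, the standard splicing argument (cf.\ \cite[Chapters III and VIII]{MacLane:Homology}) gives a well-defined external pairing
\[
  {\rm Ext}^p_{H[\Lambda]}(X_\bullet,Z_\bullet)\otimes{\rm Ext}^q_{H[\Lambda]}(X'_\bullet,Z'_\bullet)\longrightarrow{\rm Ext}^{p+q}_{(H\otimes H)[\Lambda]}(X_\bullet\boxtimes X'_\bullet,\,Z_\bullet\boxtimes Z'_\bullet)
\]
for cyclic $H$-modules $X_\bullet,Z_\bullet,X'_\bullet,Z'_\bullet$: apply $(\ \cdot\ )\boxtimes X'_\bullet$ to a Yoneda representative of $\xi$, apply $Z_\bullet\boxtimes(\ \cdot\ )$ to one of $\nu$, and splice the two exact sequences at $Z_\bullet\boxtimes X'_\bullet$; and the same works with $H[\Lambda]$ replaced by $\Lambda$ throughout. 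Taking $X_\bullet=Q_\bullet(H;A,M)$, $X'_\bullet=Q_\bullet(H;A',M')$ and $Z_\bullet=Z'_\bullet=k_\bullet^\vee$, and using $k_\bullet^\vee\boxtimes k_\bullet^\vee\cong k_\bullet^\vee$, one obtains a class in ${\rm Ext}^{p+q}_{(H\otimes H)[\Lambda]}(Q_\bullet(H;A,M)\boxtimes Q_\bullet(H;A',M'),k_\bullet^\vee)$; pulling it back along $g_\bullet$ lands it in $HC^{p+q}_{H\otimes H}(A\otimes A',M\otimes M')$. Carrying out the identical construction with $\Lambda$, $C_\bullet$ and $\bar g_\bullet$ in place of $H[\Lambda]$, $Q_\bullet$ and $g_\bullet$ produces the Hopf-cyclic pairing $HC^p_{\rm Hopf}(H;A,M)\otimes HC^q_{\rm Hopf}(H;A',M')\to HC^{p+q}_{\rm Hopf}(H\otimes H;A\otimes A',M\otimes M')$.

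I expect the crux to be the second step: verifying that the para-cyclic shuffle identification of the cover complexes really is compatible with every structure morphism once the comodule input is the codiagonal structure on $M\otimes M'$, and that the Leibniz expansions of the generators of $J_\bullet(A\otimes A',M\otimes M')$ genuinely terminate inside $J_\bullet(A,M)\boxtimes T_\bullet(A',M')+T_\bullet(A,M)\boxtimes J_\bullet(A',M')$. These computations are routine but bookkeeping-heavy. Note that $g_\bullet$ and $\bar g_\bullet$ need not be isomorphisms (the reverse inclusion of the $J$'s fails in general)---a one-sided comparison morphism already suffices to transport the Ext-class, and one never needs exactness of $k\otimes_{H\otimes H}(\ \cdot\ )$, only its functoriality. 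Everything else---biexactness of $\boxtimes$, the shuffle isomorphism on classical cyclic modules, and independence of the external Ext-product from the chosen Yoneda representatives---is standard.
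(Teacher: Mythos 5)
Your proposal is correct and follows essentially the same route as the paper: represent both classes by Yoneda extensions, apply the exact diagonal tensor product $diag(\ \cdot\ \otimes\ \cdot\ )$ (your $\boxtimes$) to one of them, splice at $k_\bullet^\vee\boxtimes Q_\bullet(H;A',M')\cong Q_\bullet(H;A',M')$, and pull the resulting class back along the natural epimorphism from $Q_\bullet(H\otimes H;A\otimes A',M\otimes M')$ onto $diag(Q_\bullet(H;A,M)\otimes Q_\bullet(H;A',M'))$. The only difference is that you supply the Leibniz-type verification that the generators of $J_\bullet(A\otimes A',M\otimes M')$ land in $J_\bullet\boxtimes T_\bullet+T_\bullet\boxtimes J_\bullet$, a point the paper asserts without proof.
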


\begin{proof}
  Any cohomology class $\nu\in HC_H^p(A,M)$ and $\nu'\in
  HC_H^q(A',M')$ can be represented by two exact sequences of cyclic
  $H$-modules of the form $\nu\colon
  0\xla{}Q_\bullet(A,M)\xla{}X_\bullet^1\xla{}
  \cdots\xla{}X_\bullet^p\xla{}k_\bullet^\vee\xla{}0$ and $\nu'\colon
  0\xla{}Q_\bullet(A',M')\xla{}Y_\bullet^1
  \xla{}\cdots\xla{}Y_\bullet^q\xla{}k_\bullet^\vee\xla{}0$.  Now we
  define the external product $\nu\times\nu'$ by the composition
  \begin{align*}
     0\xla{}diag (Q_\bullet(H;A,M)\otimes_k
      Q_\bullet(H;A',M'))\xla{}diag(X_\bullet^1\otimes_k
      Q_\bullet(H;A',M')) \xla{}\cdots\\ \xla{}diag(X_\bullet^p\otimes_k
      Q_\bullet(H;A',M')) \xla{}
      Y_\bullet^1\xla{}\cdots\xla{}Y_\bullet^q\xla{}k_\bullet^\vee\xla{}0
  \end{align*}
  which is an exact sequence of cyclic $H\otimes H$-modules.  Here
  $diag$ denotes the diagonal cyclic structure.  Therefore,
  $diag(U_\bullet\otimes V_\bullet)$ is a cyclic $H\otimes H$-module
  whenever $U_\bullet$ and $V_\bullet$ are cyclic $H$-modules.  Now
  observe that there is a natural epimorphism from $Q_\bullet(H\otimes
  H;A\otimes A',M\otimes M')$ onto $diag(Q_\bullet(H;A,M)\otimes
  Q_\bullet(H;A',M'))$.  This proves our first assertion.  The proof
  of our second assertion is similar.
\end{proof}

Given two left $H$-comodules $M$ and $M'$, we define
\[ M\Box^H M':=\left\{\sum_i m_i\otimes m'_i\ \right|\ \left. 
           \sum_i m_{i,(-1)}\otimes m_{i,(0)}\otimes m_i' = 
           \sum_i m'_{i,(-1)}\otimes m_i\otimes m'_{i,(0)}
           \right\}
\]

\begin{thm}\label{CocommutativeExternalProduct}
  Let $A$, $A'$ be two $H$-module algebras and let $M$ and $M'$ be two
  $H$-module/comodules.  Assume $H$ is cocommutative Hopf algebra and
  $M\Box^H M'$ is an $H$ submodule of $M\otimes M'$.  One has pairings
  of the form
  \[  HC^p_H(A,M)\otimes HC^q_H(A',M')\to HC^{p+q}_H(A\otimes A',M\Box^H M') \]
  and 
  \[  HC^p_{\rm Hopf}(H;A,M)\otimes HC^q_{\rm Hopf}(H;A',M')\to HC^{p+q}_{\rm Hopf}(H;A\otimes A',M\Box^H M') \]
  for any $p,q\geq 0$.
\end{thm}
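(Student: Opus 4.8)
The plan is to compose the two external products constructed in the preceding
Proposition --- which take values in the $H\otimes H$-equivariant,
respectively $H\otimes H$-Hopf-cyclic, cohomology of $A\otimes A'$ with
coefficients in $M\otimes M'$ --- with a ``diagonal restriction'' homomorphism
that replaces $H\otimes H$ by $H$ and $M\otimes M'$ by its submodule
$M\Box^H M'$.  Since the comultiplication $\Delta\colon H\to H\otimes H$ is a
morphism of algebras, every $H\otimes H$-module algebra restricts along
$\Delta$ to an $H$-module algebra; in particular $A\otimes A'$ with the
diagonal $H$-action is an $H$-module algebra.  By hypothesis $M\Box^H M'$ is an
$H$-submodule of $M\otimes M'$ with the diagonal action, and one checks, using
only the defining relation of the cotensor product and coassociativity of the
two coactions, that
\[ \textstyle \sum_i m_i\otimes m_i'\ \longmapsto\ \sum_i m_{i,(-1)}\otimes(m_{i,(0)}\otimes m_i')
   \ =\ \sum_i m'_{i,(-1)}\otimes(m_i\otimes m'_{i,(0)}) \]
is a well-defined left $H$-comodule structure on $M\Box^H M'$.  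Thus
$M\Box^H M'$ is an $H$-module/comodule and the cover complex
$T_\bullet(H;A\otimes A',M\Box^H M')$ is defined.

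The crux is that the inclusion $M\Box^H M'\hookrightarrow M\otimes M'$ induces
a morphism of para-cyclic $k$-modules
\[ \iota_\bullet\colon T_\bullet(H;A\otimes A',M\Box^H M')\longrightarrow T_\bullet(H\otimes H;A\otimes A',M\otimes M'), \]
the target being viewed, via restriction along $\Delta$, as a para-cyclic
$k$-module equipped with an $H$-action.  Compatibility of $\iota_\bullet$ with
faces and degeneracies is immediate, as these involve only the product and the
unit of $A\otimes A'$.  For the cyclic operators, expanding $\tau_n$ on the
source uses the diagonal $H$-action on $A\otimes A'$, the comodule structure on
$M\Box^H M'$ described above, and the identity
$\Delta(S^{-1}(h))=S^{-1}(h_{(2)})\otimes S^{-1}(h_{(1)})$, while $\tau_n$ on
the target uses the componentwise $H\otimes H$-action and the coaction
$m\otimes m'\mapsto(m_{(-1)}\otimes m'_{(-1)})\otimes(m_{(0)}\otimes m'_{(0)})$.
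After cancelling the outer tensor factors and using that $S^{-1}$ is bijective,
the identity $\iota_\bullet\tau_n=\tau_n\iota_\bullet$ reduces to
\[ \textstyle \sum_i m_{i,(-1)}\otimes m_{i,(-2)}\otimes m_{i,(0)}\otimes m_i'
   \ =\ \sum_i m_{i,(-1)}\otimes m'_{i,(-1)}\otimes m_{i,(0)}\otimes m'_{i,(0)} \]
in $H\otimes H\otimes M\otimes M'$, valid for $\sum_i m_i\otimes m_i'\in M\Box^H
M'$.  Cocommutativity of $H$ enters here, and only here: it lets us symmetrize
the iterated coaction on the left-hand side into $\sum_i m_{i,(-2)}\otimes
m_{i,(-1)}\otimes m_{i,(0)}\otimes m_i'$ (the swap being forced by the twist in
$\Delta(S^{-1}(h))$), after which applying $\Delta\otimes\mathrm{id}_M\otimes
\mathrm{id}_{M'}$ to the defining relation of $M\Box^H M'$, coacting once more
on the remaining factors and reordering tensor slots identifies it with the
right-hand side.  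This verification is the main obstacle of the proof.

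Because $\Delta(H)\subseteq H\otimes H$, the map $\iota_\bullet$ carries the
para-cocyclic submodule $J_\bullet(H;A\otimes A',M\Box^H M')$ into
$J_\bullet(H\otimes H;A\otimes A',M\otimes M')$, hence it descends to a morphism
of cyclic $H$-modules
\[ \bar\iota_\bullet\colon Q_\bullet(H;A\otimes A',M\Box^H M')\longrightarrow \Delta^*Q_\bullet(H\otimes H;A\otimes A',M\otimes M'), \]
where $\Delta^*$ denotes restriction of the cyclic module structure along
$\Delta$.  Applying $k\otimes_H(\ \cdot\ )$ to $\bar\iota_\bullet$ and composing
with the natural epimorphism $k\otimes_H\Delta^*N\twoheadrightarrow
k\otimes_{H\otimes H}N$ (for any $H\otimes H$-module $N$) yields a morphism of
cyclic $k$-modules $\psi_\bullet\colon C_\bullet(H;A\otimes A',M\Box^H
M')\to C_\bullet(H\otimes H;A\otimes A',M\otimes M')$.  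Since restriction along
$\Delta$ is an exact functor that fixes $k_\bullet^\vee$, it induces
homomorphisms ${\rm Ext}^{p+q}_{(H\otimes H)[\Lambda]}(Z_\bullet,k_\bullet^\vee)
\to{\rm Ext}^{p+q}_{H[\Lambda]}(\Delta^*Z_\bullet,k_\bullet^\vee)$; composing
one such with ${\rm Ext}^{p+q}_{H[\Lambda]}(\bar\iota_\bullet,k_\bullet^\vee)$
gives
\[ HC^{p+q}_{H\otimes H}(A\otimes A',M\otimes M')\longrightarrow HC^{p+q}_H(A\otimes A',M\Box^H M'), \]
and applying ${\rm Ext}^{p+q}_\Lambda(\ \cdot\ ,k_\bullet^\vee)$ to
$\psi_\bullet$ gives
\[ HC^{p+q}_{\rm Hopf}(H\otimes H;A\otimes A',M\otimes M')\longrightarrow HC^{p+q}_{\rm Hopf}(H;A\otimes A',M\Box^H M'). \]
Precomposing these with the two external products of the preceding Proposition
yields the pairings in the statement.
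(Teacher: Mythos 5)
Your proof is correct and follows exactly the route the paper intends: the theorem is stated there without an explicit proof, immediately after the external-product Proposition, and the intended argument is precisely your composition of those external products with the corestriction along $\Delta$ and the coefficient map induced by the inclusion $M\Box^H M'\hookrightarrow M\otimes M'$, descended through $Q_\bullet$ and $C_\bullet$. One minor imprecision: the restriction of an $H\otimes H$-module algebra along $\Delta$ is an $H$-module algebra because $\Delta$ is a coalgebra (hence bialgebra) map, i.e.\ because $H$ is cocommutative, so cocommutativity is used there as well and not \emph{only} in the $\tau_n$-compatibility — harmless under the theorem's hypotheses, but worth stating accurately.
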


Now assume $Z$ is a $H$-comodule coalgebra, i.e. $Z$ is an
$H$-comodule and a coalgebra such that the following compatibility
condition is satisfied
\begin{equation}\label{ComoduleCoalgebra}
   z_{[-1]}\otimes z_{[0](1)}\otimes z_{[0](2)}
   = z_{(1)[-1]}z_{(2)[-1]}\otimes z_{(1)[0]}\otimes z_{(2)[0]}
\end{equation}
where the $H$-comodule structure $\lambda\colon Z\to H\otimes Z$ is
denoted by $z_{[-1]}\otimes z_{[0]}$ and the comultiplication
$\Delta\colon Z\to Z\otimes Z$ is denoted by $z_{(1)}\otimes z_{(2)}$
for any $z\in Z$.

\begin{lem}
  Assume $H$ is commutative.  If $Z$ and $Z'$ are two arbitrary
  $H$-comodule coalgebras then their product $Z\otimes Z'$ has an
  $H$-comodule coalgebra structure.
\end{lem}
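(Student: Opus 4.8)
The plan is to equip $Z\otimes Z'$ with the tensor-product coalgebra structure together with the diagonal $H$-coaction, and then to check that the only genuinely new thing to verify is the compatibility~\eqref{ComoduleCoalgebra} for $Z\otimes Z'$ --- which is exactly where commutativity of $H$ enters.

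First I would fix the structures. Give $Z\otimes Z'$ the comultiplication $z\otimes z'\mapsto (z_{(1)}\otimes z'_{(1)})\otimes(z_{(2)}\otimes z'_{(2)})$ and counit $\varepsilon_Z\otimes\varepsilon_{Z'}$, and define the coaction by
\[ (z\otimes z')_{[-1]}\otimes(z\otimes z')_{[0]}:=z_{[-1]}z'_{[-1]}\otimes z_{[0]}\otimes z'_{[0]}. \]
Since $H$ is a bialgebra, this diagonal coaction is coassociative and counital: it is the standard $H$-comodule structure on a tensor product of $H$-comodules, and its verification uses only that $\Delta_H$ is an algebra map, not commutativity. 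The counit $\varepsilon_Z\otimes\varepsilon_{Z'}$ is $H$-colinear as well, since the identity $z_{[-1]}z'_{[-1]}\varepsilon_Z(z_{[0]})\varepsilon_{Z'}(z'_{[0]})=\varepsilon_Z(z)\varepsilon_{Z'}(z')1_H$ factors as the product of the corresponding identities for $Z$ and for $Z'$; again no hypothesis on $H$ is needed. So every comodule-coalgebra axiom for $Z\otimes Z'$ other than~\eqref{ComoduleCoalgebra} holds over an arbitrary bialgebra.

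The substance of the proof is checking~\eqref{ComoduleCoalgebra} for $Z\otimes Z'$. Expanding the left-hand side with the definitions above gives
\[ z_{[-1]}z'_{[-1]}\otimes(z_{[0](1)}\otimes z'_{[0](1)})\otimes(z_{[0](2)}\otimes z'_{[0](2)}), \]
and expanding the right-hand side gives
\[ z_{(1)[-1]}z'_{(1)[-1]}z_{(2)[-1]}z'_{(2)[-1]}\otimes(z_{(1)[0]}\otimes z'_{(1)[0]})\otimes(z_{(2)[0]}\otimes z'_{(2)[0]}). \]
Now apply~\eqref{ComoduleCoalgebra} for $Z$ in the tensor slots occupied by $z_{[-1]},z_{[0](1)},z_{[0](2)}$ and~\eqref{ComoduleCoalgebra} for $Z'$ in the slots occupied by $z'_{[-1]},z'_{[0](1)},z'_{[0](2)}$; this rewrites the left-hand side as
\[ z_{(1)[-1]}z_{(2)[-1]}z'_{(1)[-1]}z'_{(2)[-1]}\otimes(z_{(1)[0]}\otimes z'_{(1)[0]})\otimes(z_{(2)[0]}\otimes z'_{(2)[0]}). \]
The two coalgebra tensor slots of this and of the right-hand side already coincide, and the $H$-components differ only by transposing $z_{(2)[-1]}$ and $z'_{(1)[-1]}$. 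Commutativity of $H$ makes this transposition legitimate, and the two sides then agree.

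The only delicate point is the Sweedler-notation bookkeeping: one must keep track of which copy of $H$ and which copies of $Z,Z'$ each symbol lives in, so that the two instances of~\eqref{ComoduleCoalgebra} are applied in the right tensor positions. Beyond that the argument is purely formal, and commutativity of $H$ is used exactly once, in the transposition identified above.
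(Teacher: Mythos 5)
Your proof is correct and is exactly the intended argument: tensor-product coalgebra structure, diagonal (codiagonal) $H$-coaction, and a single use of commutativity of $H$ to transpose $z_{(2)[-1]}$ and $z'_{(1)[-1]}$ when verifying the compatibility condition~(\ref{ComoduleCoalgebra}) for $Z\otimes Z'$. The paper omits the proof of this lemma entirely, so there is nothing further to compare against; your write-up supplies the missing verification correctly.
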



\begin{defn}
  For a $H$-comodule coalgebra $Z$ and a stable $H$-module/comodule
  $M$, we define the Hopf-cyclic module $C_\bullet(Z,M)$ associated
  with the pair $(Z,M)$ as follows: on the graded $k$-module level we
  let $C_n(Z,M):= {\rm Hom}_{H\text{-comod}}(Z^{\otimes n+1},M)$ where
  we view $Z^{\otimes n+1}$ as an $H$-comodule via the diagonal
  coaction, i.e.
  \[ (z^0\otimes\cdots\otimes z^n)_{[-1]}\otimes
     (z^0\otimes\cdots\otimes z^n)_{[0]}
     := z^0_{[-1]}\cdots z^n_{[-1]}\otimes (z^0_{[0]}\otimes\cdots\otimes z^n_{[0]})
  \]
  for any $z^i\in Z$.  The cyclic structure morphisms are defined by
  \begin{align*}
    (\partial_0 f)(z^0\otimes\cdots\otimes z^{n-1})
    := & f(z^0_{(1)}\otimes z^0_{(2)}\otimes z^1\otimes\cdots\otimes z^{n-1})\\
    (\sigma_0 f)(z^0\otimes\cdots\otimes z^{n+1})
    := & \varepsilon(z^1) f(z^0\otimes z^2\otimes\cdots\otimes z^{n+1})\\
    (\tau_n f)(z^0\otimes\cdots\otimes z^n)
    := & z^0_{[-1]} f(z^1\otimes\cdots\otimes z^n\otimes z^0_{[0]})
  \end{align*}
  for any $f\in C_n(Z,M)$ and $z^i\in Z$.  Then we set $\partial_j :=
  \tau_{n-1}^j\partial_0\tau_n^{-j}$ and $\sigma_i
  :=\tau_{n+1}^i\sigma_0\tau_n^{-i}$ for $0\leq j\leq n$ and $0\leq
  i\leq n$.  The cyclic cohomology of this cyclic $H$-module will be
  denoted by $HC^*_{\rm Hopf}(Z,M)$.  The cyclic cohomology of the
  dual cocyclic object $C_\bullet(Z,M)^\vee$ will be denoted by
  $HC_{\rm Hopf}^{\vee,*}(Z,M)$.
\end{defn}

\begin{thm}\label{CommutativeExternalProduct}
  Assume $Z$ and $Z'$ are arbitrary $H$-comodule coalgebras, $M$ and
  $M'$ are arbitrary $H$-module/comodules.  If $H$ is commutative and
  $M$ and is a symmetric $H$-module then there is a pairing of the
  form
  \[ HC^{\vee,p}_{\rm Hopf}(Z,M)\otimes HC^{\vee,q}_{\rm Hopf}(Z',M')\to 
     HC^{\vee,p+q}_{\rm Hopf}(Z\otimes Z', M\otimes_H M')
  \]
  for any $p,q\geq 0$.
\end{thm}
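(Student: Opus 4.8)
The plan is to mimic the Yoneda-splicing strategy used in the proof of Theorem~\ref{HopfCupProduct}, but now in the dual cocyclic world and with the tensor-over-$H$ functor replacing $diag\,{\rm Hom}_k$. First I would recall that, by the definition just given, $HC^{\vee,p}_{\rm Hopf}(Z,M) = {\rm Ext}^p_\Lambda(k_\bullet, C_\bullet(Z,M)^\vee)$ and likewise for $(Z',M')$, so a class $\xi$ (resp. $\xi'$) is represented by an exact sequence of cocyclic $k$-modules $0\xleftarrow{} k_\bullet \xleftarrow{} Y^1_\bullet \xleftarrow{}\cdots\xleftarrow{} Y^p_\bullet \xleftarrow{} C_\bullet(Z,M)^\vee \xleftarrow{} 0$ and similarly a length-$q$ sequence ending in $C_\bullet(Z',M')^\vee$. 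The target cohomology $HC^{\vee,p+q}_{\rm Hopf}(Z\otimes Z', M\otimes_H M')$ is ${\rm Ext}^{p+q}_\Lambda(k_\bullet, C_\bullet(Z\otimes Z', M\otimes_H M')^\vee)$.

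The key construction is a morphism of cocyclic $k$-modules
\[
  \beta_\bullet\colon diag\bigl(C_\bullet(Z,M)^\vee \otimes_k C_\bullet(Z',M')^\vee\bigr)
  \longrightarrow C_\bullet(Z\otimes Z', M\otimes_H M')^\vee
\]
analogous to $\alpha_\bullet$ of Proposition~\ref{Lift}. Concretely, given $f\in C_n(Z,M) = {\rm Hom}_{H\text{-comod}}(Z^{\otimes n+1},M)$ and $g\in C_n(Z',M')$, one forms $f\otimes g$ landing in $M\otimes M'$, and here one needs $H$ commutative so that the diagonal coaction on $(Z\otimes Z')^{\otimes n+1}$ matches $z^0_{[-1]}\cdots z^n_{[-1]}z'^0_{[-1]}\cdots z'^n_{[-1]}$ and so that $f\otimes g$ is $H$-colinear into $M\otimes M'$; then one composes with the projection $M\otimes M'\to M\otimes_H M'$, using that $M$ is a symmetric $H$-module to see this is still colinear (and that the coaction descends). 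One checks compatibility with the faces, degeneracies and the cyclic operator exactly as in Proposition~\ref{Lift}, the cyclic operator check being where the symmetry of the $H$-action on $M$ is consumed in moving $z^0_{[0]}$ and $z'^0_{[0]}$ past the coefficient. Dualizing (which is exact since $k$ is a field) turns $\beta_\bullet$ into the displayed map on cyclic duals.

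With $\beta_\bullet$ in hand the pairing is assembled formally. Since $k$ is a field, $diag\bigl((\ \cdot\ )\otimes_k C_\bullet(Z',M')^\vee\bigr)$ is an exact functor on cocyclic $k$-modules and sends $k_\bullet$ to (a cocyclic module naturally isomorphic to) $C_\bullet(Z',M')^\vee$, so applying it to $\xi$ yields an exact sequence of length $p$ from $diag\bigl(C_\bullet(Z,M)^\vee\otimes_k C_\bullet(Z',M')^\vee\bigr)$ back to $C_\bullet(Z',M')^\vee$. Splicing this with the length-$q$ representative of $\xi'$ at $C_\bullet(Z',M')^\vee$ produces a class in ${\rm Ext}^{p+q}_\Lambda\bigl(k_\bullet, diag(C_\bullet(Z,M)^\vee\otimes_k C_\bullet(Z',M')^\vee)\bigr)$, and then ${\rm Ext}^{p+q}_\Lambda(k_\bullet,\beta_\bullet)$ pushes it into $HC^{\vee,p+q}_{\rm Hopf}(Z\otimes Z', M\otimes_H M')$; this is $\xi\smile\xi'$. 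One should also remark, as after Theorem~\ref{HopfCupProduct}, that splicing $\xi'$ first gives the same class up to the sign $(-1)^{pq}$, by the evident interchange law for the bifunctor $diag\bigl((\ \cdot\ )\otimes_k(\ \cdot\ )\bigr)$.

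I expect the main obstacle to be the verification that $\beta_\bullet$ is well defined — specifically that $f\otimes g$, followed by $M\otimes M'\twoheadrightarrow M\otimes_H M'$, genuinely lands in ${\rm Hom}_{H\text{-comod}}\bigl((Z\otimes Z')^{\otimes n+1}, M\otimes_H M'\bigr)$ and is compatible with $\tau_n$. This is exactly the point at which both hypotheses ($H$ commutative, $M$ symmetric) are needed: commutativity to identify the two diagonal $H$-coactions and to make the tensor-over-$H$ a well-behaved (symmetric monoidal) operation, and symmetry of $M$ to push coaction components through the coefficient in the cyclic operator without introducing a discrepancy. Once the well-definedness and cocyclicity of $\beta_\bullet$ are checked, everything else is the same formal Yoneda bookkeeping as in Theorem~\ref{HopfCupProduct}.
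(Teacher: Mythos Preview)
Your strategy is essentially the paper's: construct the product map on the diagonal tensor of the Hopf-cyclic modules, check the cyclic operator using symmetry of $M$, then splice Yoneda extensions and push forward. The paper's map $*$ is exactly your $(f,g)\mapsto f\otimes_H g$ and the splicing step is the same.

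The one unnecessary detour in your write-up is the passage to cocyclic duals. The paper stays on the cyclic side throughout: it uses the identification $HC^{\vee,*}_{\rm Hopf}(Z,M)\cong {\rm Ext}^*_\Lambda(k_\bullet^\vee, C_\bullet(Z,M))$ and represents classes by exact sequences of \emph{cyclic} $k$-modules $0\xla{}k_\bullet^\vee\xla{}U^1_\bullet\xla{}\cdots\xla{}U^p_\bullet\xla{}C_\bullet(Z,M)\xla{}0$, so the product map $*\colon diag(C_\bullet(Z,M)\otimes C_\bullet(Z',M'))\to C_\bullet(Z\otimes Z',M\otimes_H M')$ can be used as-is without any dualization. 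Your route through $C_\bullet(Z,M)^\vee$ is equivalent (Connes' cyclic duality is an equivalence of categories), but note that this duality is \emph{not} $k$-linear duality --- it is a reindexing of the structure maps on the same underlying graded module --- so the justification ``exact since $k$ is a field'' is beside the point, and you would also owe a check that $(diag(X_\bullet\otimes Y_\bullet))^\vee\cong diag(X_\bullet^\vee\otimes Y_\bullet^\vee)$. Working directly with the cyclic modules as the paper does avoids both issues.
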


\begin{proof}
  There is a well-defined morphism of cocyclic $k$-modules of the form
  \[ *\colon diag (C_\bullet(Z,M)\otimes C_\bullet(Z',M'))\to C_\bullet(Z\otimes Z',M\otimes_H M') \]
  given by the formula
  \[ (f*f')((x^0,y^0)\otimes\cdots\otimes (x^n,y^n))
     := f(x^0\otimes\cdots\otimes x^n)\otimes_H f'(y^0\otimes\cdots\otimes y^n)
  \] 
  for any $n\geq 0$, $x^i\in Z$ and $y^i\in Z'$.  It is easy to see
  that $*$ is a morphism of simplicial modules.  We see that
  \begin{align*}
    (\tau_n f)*(\tau_n f')((x^0,y^0)\otimes\cdots\otimes (x^n,y^n))
     = & (\tau_n f)(x^0\otimes\cdots\otimes x^n)\otimes_H (\tau_n f')(y^0\otimes\cdots\otimes y^n)\\
     = & x^0_{[-1]}f(x^1\otimes\cdots\otimes x^n\otimes x^0_{[-1]})\otimes_H 
         y^0_{[-1]}f(y^1\otimes\cdots\otimes y^n\otimes y^0_{[-1]})\\
     = & x^0_{[-1]}y^0_{[-1]} f(x^1\otimes\cdots\otimes x^n\otimes x^0_{[-1]})\otimes_H 
         f(y^1\otimes\cdots\otimes y^n\otimes y^0_{[-1]})\\
     = & (\tau_n(f*f'))((x^0,y^0)\otimes\cdots\otimes (x^n,y^n))
  \end{align*}
  since $M$ is symmetric, as we wanted to show.  Now take two exact
  sequences $\nu\colon 0\xla{} k_\bullet^\vee\xla{}
  U^1_\bullet\xla{}\cdots\xla{}U^p_\bullet\xla{}C_\bullet(Z,M)\xla{}0$
  and $\nu'\colon 0\xla{} k_\bullet^\vee\xla{}
  V^1_\bullet\xla{}\cdots\xla{}V^q_\bullet\xla{}C_\bullet(Z',M')\xla{}0$
  representing two cyclic cohomology class in $HC^{\vee,p}_{\rm
  Hopf}(Z,M)$ and $HC^{\vee,q}_{\rm Hopf}(Z',M')$ respectively.
  Consider the exact sequence
  \begin{align*}
    0\xla{} k_\bullet^\vee & \xla{} U^1_\bullet \xla{}\cdots\xla{} U^p_\bullet\xla{}
        diag(C_\bullet(Z,M)\otimes V^1_\bullet)\xla{}\cdots\\
        & \xla{}diag(C_\bullet(Z,M)\otimes V^q_\bullet) \xla{}  
              diag(C_\bullet(Z,M)\otimes C_\bullet(Z',M'))\xla{}0
  \end{align*}
  which represents a class in ${\rm Ext}^{p+q}_\Lambda(k_\bullet^\vee,
  diag(C_\bullet(Z,M)\otimes C_\bullet(Z',M')))$.  Using the morphism
  $*$ we defined above we get a class in ${\rm
  Ext}^{p+q}_\Lambda(k_\bullet^\vee, C_\bullet(Z\otimes Z',M\otimes_H
  M')$.  The result follows after observing the fact that
  \[ {\rm Ext}^*_\Lambda(k_\bullet^\vee,X_\bullet) \cong
     HC^{\vee,p}(X_\bullet)
  \]
  for any cyclic $k$-module $X_\bullet$.
\end{proof}

\section{Crossed product (co)algebras}

In this section, we will assume $A$ is an $H$-module algebra and $B$
is an $H$-comodule algebra.  $M$ will denote an arbitrary (left-left)
stable anti-Yetter-Drinfeld (SAYD)
module~\cite{Khalkhali:SaYDModules}, i.e. $M$ satisfies
\[ m_{(-1)}m_{(0)} = m \quad\text{ and }\quad
   (hm)_{(-1)}\otimes (hm)_{(0)} = h_{(1)}m_{(-1)} S^{-1}(h_{(3)})\otimes h_{(2)}m_{(0)} \]
for any $m\in M$ and $h\in H$.

\begin{defn}
  We construct a new algebra $A\rtimes B$ which is defined as
  $A\otimes B$ on the $k$-module level.  The multiplication structure
  is defined by the formula
  \[ (a,b)(a',b'):= (a (b_{(-1)} a'), b_{(0)} b') \]
  for $(a,b),(a',b')\in A\rtimes B$.
\end{defn}

Recall the following definition from \cite{Khalkhali:HopfCyclicHomology}.
\begin{defn}
  The Hopf-cocyclic $k$-module $C_\bullet(B,M)$ associated with the
  pair $(B,M)$ is defined by $C_n(B,M) = {\rm
  Hom}_{H\text{-comod}}(B^{\otimes n+1},M)$ on the graded module level
  for any $n\geq 0$.  This means $f\colon B^{\otimes n+1}\to M$ is in
  $C_n(B,M)$ if and only if
  \[ (f(b^0\otimes\cdots\otimes b^n))_{(-1)}\otimes (f(b^0\otimes\cdots\otimes b^n))_{(0)}
     = b^0_{(-1)}\cdots b^n_{(-1)}\otimes f(b^0_{(0)}\otimes\cdots\otimes b^n_{(0)})
  \]
  for any $b^i\in B$ for $i=0,\ldots,n$.  We let
  \begin{align*}
    (\partial_0 f)(b^0\otimes\cdots\otimes b^{n+1})
    := & f(b^0 b^1\otimes b^2\otimes\cdots\otimes b^{n+1})\\
    (\sigma_0 f)(b^0\otimes\cdots\otimes b^{n-1})
    := & f(b^0 \otimes 1_B\otimes b^1\otimes\cdots\otimes b^{n+1})\\
    (\tau_n f)(b^0 \otimes\cdots\otimes b^n)
    := & S(b^n_{(-1)}) f(b^n_{(0)}\otimes b^1\otimes\cdots\otimes b^{n-1})
  \end{align*}
  Then we let define $\partial_j := \tau_{n+1}^{-j}\partial_0\tau_n^j$
  and $\sigma_i := \tau_{n-1}^{-j}\sigma_0\tau_n^j$ for $0\leq j\leq
  n+1$ and $0\leq i\leq n$.  Note that since $M$ is stable,
  $\tau_n^{n+1} = id$ for any $n\geq 0$.
\end{defn}

\begin{prop}\label{CrossedProduct}
  Let us define
  \begin{align*}
    \beta_n( & (a_0,b^0) \otimes\cdots\otimes (a_n,b^n))(f)\nonumber\\
     & := a_0\otimes b^0_{(-n)}a_1\otimes b^0_{(-n+1)}b^1_{(-n+1)}a_2\otimes
            \cdots\otimes b^0_{(-1)}\cdots b^{n-1}_{(-1)} a_n
            \otimes f(b^0_{(0)}\otimes\cdots\otimes b^{n-1}_{(0)}\otimes b^n)\nonumber
  \end{align*}
  for any $n\geq 0$, $(a_i,b^i)\in A\rtimes B$ and $f\in C_n(B,M)$.
  Then $\beta_\bullet$ defines a morphism of cyclic modules of the
  form
  \[ \beta_\bullet\colon Cyc_\bullet(A\rtimes B)\to diag{\rm Hom}_k(C_\bullet(B,M),C_\bullet(A,M)) \]
\end{prop}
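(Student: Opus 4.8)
The plan is to verify directly that $\beta_\bullet$ is compatible with all the structural maps of the cyclic category, using the formula for $\beta_n$ together with the cyclic structure on $Cyc_\bullet(A\rtimes B)$ coming from the crossed-product multiplication $(a,b)(a',b') = (a(b_{(-1)}a'), b_{(0)}b')$ and the cyclic structure on $diag\,{\rm Hom}_k(C_\bullet(B,M), C_\bullet(A,M))$ described in the earlier definition, i.e. $(\partial_j g)(f) = \partial_j^{(A,M)} g(\partial_j^{(B,M)} f)$ and similarly for $\sigma_j$ and $\tau_n$. As usual it suffices to check compatibility with $\partial_0$, $\sigma_0$ and $\tau_n$, since the higher faces and degeneracies are defined by conjugating these by powers of the cyclic operator.

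First I would check the $\tau_n$-compatibility, which is the heart of the matter. Expanding $\beta_n(\tau_n((a_0,b^0)\otimes\cdots\otimes(a_n,b^n)))(f)$ requires the crossed-product cyclic operator, which cyclically permutes the tensor factors while dragging the last factor $(a_n,b^n)$ to the front and twisting the rest by its $B$-comodule element; one then matches this against $\tau_{n,(A,M)}\,\beta_n((a_0,b^0)\otimes\cdots\otimes(a_n,b^n))(\tau_{n,(B,M)}f)$, using that $\tau_{n,(B,M)}f$ applies $S(b^n_{(-1)})$ and cyclically rotates the arguments of $f$, and that $M$ being a stable anti-Yetter--Drinfeld module gives $\tau_n^{n+1}=\mathrm{id}$ on $C_\bullet(B,M)$ together with the AYD compatibility needed to move the coaction labels $b^i_{(-k)}$ past the antipode $S$. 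The bookkeeping of the iterated coactions $b^0_{(-n)}\otimes b^0_{(-n+1)}\otimes\cdots$ against the diagonal $H$-coaction on $B^{\otimes n+1}$ that defines $C_n(B,M)$ is what makes this step delicate.

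Next I would verify $\partial_0$-compatibility: on the left one multiplies the first two factors in $A\rtimes B$, producing $(a_0(b^0_{(-1)}a_1), b^0_{(0)}b^1)$, so after applying $\beta_{n-1}$ and $\partial_0^{(B,M)}f = f(b^0b^1\otimes b^2\otimes\cdots)$ one must recognize the result as $\partial_0^{(A,M)}$ applied to $\beta_n(\cdots)(f)$, i.e. the merge of the first two $A$-factors $a_0$ and $b^0_{(-n)}a_1$ — here the coassociativity of the $B$-coaction and the module-algebra axiom for $A$ conspire to make the two sides agree. The $\sigma_0$-compatibility is the easiest: inserting $1_B$ in $B$-slot and $1_A$ in $A$-slot corresponds under $\beta$ to $\sigma_0$ on both sides, using $h(1_A)=\varepsilon(h)1_A$ to collapse the extra coaction factor.

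The main obstacle I expect is the $\tau_n$ identity: keeping the indexing of the iterated coactions straight (the shift from $b^i_{(-k)}$ to $b^i_{(-k+1)}$ under the rotation) and correctly invoking the SAYD condition so that the antipode terms $S(b^n_{(-1)})$ appearing from $\tau_{n,(B,M)}f$ cancel against the comodule twists carried by $\beta_n$. The other steps are routine once the coaction bookkeeping in the cyclic step is set up carefully, so I would do $\tau_n$ first and then read off $\partial_0$ and $\sigma_0$ as comparatively straightforward special cases of the same computation.
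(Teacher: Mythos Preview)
Your overall strategy---direct verification of compatibility with $\partial_0$, $\sigma_0$, and $\tau_n$---is exactly the paper's approach, and you correctly identify $\tau_n$ as the crux. Two points, however, need correcting.

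First, a minor slip: the cyclic operator on $Cyc_\bullet(A\rtimes B)$ is the \emph{ordinary} cyclic permutation $\tau_n((a_0,b^0)\otimes\cdots\otimes(a_n,b^n)) = (a_n,b^n)\otimes(a_0,b^0)\otimes\cdots\otimes(a_{n-1},b^{n-1})$; there is no comodule twist here. The crossed-product structure enters only through the multiplication, hence only through the face maps. The ``twisting by the $B$-comodule element'' you describe is an artifact of applying $\beta_n$ \emph{after} the permutation, not part of $\tau_n$ itself.

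Second, and more importantly, you are missing the key ingredient the paper uses to make the $\tau_n$-computation close. After expanding $\beta_n(\tau_n(\cdots))(f)$, one has $a_n$ in the zeroth slot carrying no $b^n_{(-k)}$ factor while all other slots carry such factors; on the other side, $\tau_{n,(A,M)}\beta_n(\cdots)(\tau_{n,(B,M)}f)$ produces $S^{-1}(m_{(-1)})$ on the zeroth $A$-slot and $S(b^n_{(-1)})$ acting on $M$. The bridge is \emph{not} only the SAYD axiom: it is the fact that $C_\bullet(A,M) := k\otimes_H Q_\bullet(A,M)$ carries the \emph{trivial} diagonal $H$-action, so one may freely multiply the whole element of $C_n(A,M)$ by the diagonal action of $S^{-1}(b^n_{(-)})$. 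This kills the $b^n$ factors on slots $1,\ldots,n-1$, deposits $S^{-1}(b^n_{(-2)})$ on $a_n$, and leaves the piece acting on the $M$-coordinate to be handled by the $H$-colinearity of $f$ together with the SAYD compatibility. Without invoking this triviality of the diagonal action you will not be able to match the two sides; the antipode terms do not simply ``cancel against the comodule twists carried by $\beta_n$'' as you suggest.
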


\begin{proof}
  We see that 
  \begin{align*}
    \beta_n(\partial_0 & ((a_0,b^0)\otimes\cdots\otimes (a_n,b^n)))(f)\nonumber\\
     = & \beta_n((a_0 b^0_{(-1)}(a_1), b^0_{(0)} b^1)\otimes(a_2,b^2)\cdots\otimes (a_n,b^n))\nonumber\\
     = & a_0 b^0_{(-n)}(a_1)\otimes b^0_{(-n+1)}b^1_{(-n+1)}a_2\otimes\cdots\otimes b^0_{(-1)}\cdots b^{n-1}_{(-1)}a_n
         \otimes f(b^0_{(0)}b^1_{(0)}\otimes b^2_{(0)}\otimes\cdots\otimes b^{n-1}_{(0)}\otimes b^n)\\
     = & \partial_0(a_0\otimes b^0_{(-n)}a_1\otimes b^0_{(-n+1)}b^1_{(-n+1)}a_2\otimes
            \cdots\otimes b^0_{(-1)}\cdots b^{n-1}_{(-1)} a_n)\otimes (\partial_0 f)
            (b^0_{(0)}\otimes\cdots\otimes b^{n-1}_{(0)}\otimes b^n)\nonumber\\
    \beta_n(\sigma_0 & ((a_0,b^0)\otimes\cdots\otimes (a_n,b^n)))(f)\nonumber\\
     = & \beta_n((a_0,b^0)\otimes(1,1)\otimes\cdots\otimes (a_n,b^n)))(f)\nonumber\\
     = & a_0\otimes 1\otimes b^0_{(-n)}a_1\otimes b^0_{(-n+1)}b^1_{(-n+1)}a_2\otimes
            \cdots\otimes b^0_{(-1)}\cdots b^{n-1}_{(-1)} a_n\otimes 
            f(b^0_{(0)}\otimes 1\otimes b^1_{(0)}\otimes\cdots\otimes b^{n-1}_{(0)}\otimes b^n)\\
     = & \sigma_0(a_0\otimes b^0_{(-n)}a_1\otimes b^0_{(-n+1)}b^1_{(-n+1)}a_2\otimes
            \cdots\otimes b^0_{(-1)}\cdots b^{n-1}_{(-1)} a_n)
         \otimes (\sigma_0 f)(b^0_{(0)}\otimes\cdots\otimes b^{n-1}_{(0)}\otimes b^n)\nonumber
  \end{align*}
  and finally we obtain \small
  \begin{align*}
    \beta_n(\tau_n( & (a_0,b^0) \otimes\cdots\otimes(a_n,b^n)))(f)\nonumber\\
    = & \beta_n((a_n,b^n)\otimes (a_0,b^0)\otimes\cdots\otimes(a_{n-1},b^{n-1}))\\
    = & a_n\otimes b^n_{(-n)} a_0\otimes b^n_{(-n+1)} b^0_{(-n+1)}a_1\otimes\cdots\otimes
                   b^n_{(-1)}b^0_{(-1)}\cdots b^{n-2}_{(-1)}a_{n-1}
           \otimes f(b^n_{(0)}\otimes b^0_{(0)}\otimes\cdots\otimes b^{n-2}_{(0)}\otimes b^{n-1})\nonumber\\
    = & S^{-1}(b^n_{(-2)})a_n\otimes a_0\otimes b^0_{(-n+1)}a_1\otimes\cdots\otimes 
               b^{0}_{(-1)}\cdots b^{n-2}_{(-1)}a_{n-1}
           \otimes  S(b^n_{(-1)})f(b^n_{(0)}\otimes b^0_{(0)}\otimes\cdots\otimes b^{n-2}_{(0)}\otimes b^{n-1})\nonumber
  \end{align*}
  \normalsize where the last equality follows from the fact that
  $C_\bullet(A,M)$ is a trivial $H$-module with respect to the
  diagonal $H$-action.  Then using the fact that $M$ is a SAYD module
  and $f\in {\rm Hom}_{H\text{-comod}}(B^{\otimes n+1},M)$ we get
  \begin{align*}
    \beta_n(\tau_n(&(a_0,b^0)\otimes\cdots\otimes(a_n,b^n)))(f)\nonumber\\
    = & S^{-1}(b^n_{(-1)(1)})S^{-1}(b^n_{(0)(-1)}b^0_{(0)(-1)}\cdots b^{n-2}_{(0)(-1)}b^{n-1}_{(-1)}) b^n_{(-1)(3)}
               b^0_{(-1)}\cdots b^{n-1}_{(-1)}a_n\nonumber\\
      &\quad  \otimes a_0\otimes b^0_{(-n)}a_1\otimes\cdots\otimes 
               b^{0}_{(-2)}\cdots b^{n-2}_{(-2)}a_{n-1}\otimes S(b^n_{(-1)(2)})f(b^n_{(0)(0)}\otimes 
               b^0_{(0)(0)}\otimes\cdots\otimes b^{n-2}_{(0)(0)}\otimes b^{n-1}_{(0)})\\
    = & \tau_n\left(\beta_n((a_0,b_0)\otimes\cdots\otimes(a_n,b_n))(\tau_n f)\right)\nonumber
  \end{align*}
  as we wanted to show.
\end{proof}

\begin{thm}\label{CupProductCrossedProduct}
  Assume $M$ is a SAYD module over $H$.  If $A$ is an $H$-module
  algebra and $B$ is an $H$-comodule algebra then there is a pairing
  of the form
  \[ \smile\colon HC^p_{\rm Hopf}(A,M)\otimes HC^q_{\rm Hopf}(B,M)
     \to HC^{p+q}(A\rtimes B)
  \]
  for any $p,q\geq 0$.
\end{thm}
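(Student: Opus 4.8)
The plan is to mimic the proof of Theorem~\ref{HopfCupProduct}, replacing the morphism $\alpha_\bullet$ from Proposition~\ref{Lift} with the morphism $\beta_\bullet$ constructed in Proposition~\ref{CrossedProduct}. First I would recall the derived-functor descriptions $HC^p_{\rm Hopf}(A,M) = {\rm Ext}^p_\Lambda(C_\bullet(A,M),k_\bullet^\vee)$ and $HC^q_{\rm Hopf}(B,M) = {\rm Ext}^q_\Lambda(C_\bullet(B,M),k_\bullet^\vee)$, where the Hopf-cocyclic module $C_\bullet(B,M)$ for the $H$-comodule algebra $B$ is as in the preceding definition. Given classes $\xi \in HC^p_{\rm Hopf}(A,M)$ and $\nu \in HC^q_{\rm Hopf}(B,M)$, represent them by Yoneda exact sequences
\begin{align*}
  \xi &: \quad 0 \xla{} C_\bullet(A,M) \xla{} Y^1_\bullet \xla{} \cdots \xla{} Y^p_\bullet \xla{} k_\bullet^\vee \xla{} 0, \\
  \nu &: \quad 0 \xla{} C_\bullet(B,M) \xla{} Z^1_\bullet \xla{} \cdots \xla{} Z^q_\bullet \xla{} k_\bullet^\vee \xla{} 0,
\end{align*}
of cyclic $k$-modules, exactly as in Theorem~\ref{HopfCupProduct}.

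Next I would apply the exact bifunctor $diag\,{\rm Hom}_k(\ \cdot\ ,\ \cdot\ )$. Since $k$ is a field, $diag\,{\rm Hom}_k(C_\bullet(B,M),\ \cdot\ )$ is exact, so applying it to $\xi$ yields an exact sequence of cyclic $k$-modules from $diag\,{\rm Hom}_k(C_\bullet(B,M),k_\bullet^\vee)$ to $diag\,{\rm Hom}_k(C_\bullet(B,M),C_\bullet(A,M))$, using that $diag\,{\rm Hom}_k(C_\bullet(B,M),k_\bullet^\vee) \cong C_\bullet(B,M)^\vee$ appropriately — actually the cleaner route, paralleling the original proof, is to apply $diag\,{\rm Hom}_k(\ \cdot\ ,C_\bullet(A,M))$ to $\nu$ and observe $diag\,{\rm Hom}_k(k_\bullet^\vee,C_\bullet(A,M))$ relates to $C_\bullet(A,M)$; then splice with $\xi$. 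Either way one obtains a class in ${\rm Ext}^{p+q}_\Lambda(C_\bullet(A,M),diag\,{\rm Hom}_k(C_\bullet(B,M),k_\bullet^\vee))$, or dually in ${\rm Ext}^{p+q}_\Lambda(diag\,{\rm Hom}_k(C_\bullet(B,M),C_\bullet(A,M)),k_\bullet^\vee)$. I would then feed this through ${\rm Ext}^{p+q}_\Lambda(\beta_\bullet,k_\bullet^\vee)$, where $\beta_\bullet\colon Cyc_\bullet(A\rtimes B)\to diag\,{\rm Hom}_k(C_\bullet(B,M),C_\bullet(A,M))$ is the morphism of cyclic modules from Proposition~\ref{CrossedProduct}, landing in ${\rm Ext}^{p+q}_\Lambda(Cyc_\bullet(A\rtimes B),k_\bullet^\vee) = HC^{p+q}(A\rtimes B)$.

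Explicitly, the pairing would be written
\[ \xi\smile\nu := {\rm Ext}^{p+q}_\Lambda(\beta_\bullet,k_\bullet^\vee)\left(diag\,{\rm Hom}_k(\xi,C_\bullet(A,M))\circ\nu\right), \]
with $\circ$ the Yoneda splicing product in the opposite order, in complete analogy with the formula following Theorem~\ref{HopfCupProduct}. The genuine content that makes this work — and the step I expect to be the main obstacle in a fully detailed writeup — is Proposition~\ref{CrossedProduct} itself: the verification that the formula for $\beta_n$ is compatible with all the para-cyclic structure maps, in particular the cyclic operator, which is where the SAYD condition on $M$ and the $H$-comodule linearity of $f$ are genuinely used (as the displayed computation in that proof shows, via the identity $(hm)_{(-1)}\otimes(hm)_{(0)} = h_{(1)}m_{(-1)}S^{-1}(h_{(3)})\otimes h_{(2)}m_{(0)}$ together with stability $m_{(-1)}m_{(0)} = m$). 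Given that proposition, the homological-algebra assembly via Yoneda Ext is formal and identical in shape to Theorem~\ref{HopfCupProduct}; one should also note in passing, as before, that the ``other'' pairing $\nu\smile\xi$ obtained by splicing in the reverse order agrees with $\xi\smile\nu$ up to the sign $(-1)^{pq}$, by the same bifunctoriality argument used in the proposition immediately following Theorem~\ref{HopfCupProduct}.
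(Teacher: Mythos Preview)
Your overall strategy is exactly the paper's: replace $\alpha_\bullet$ by $\beta_\bullet$ in the argument of Theorem~\ref{HopfCupProduct} and run the same Yoneda-splicing machine. However, there is a genuine bookkeeping error that makes your displayed sequences and your final formula fail as written. The Hopf module $C_\bullet(B,M)$ associated to the $H$-comodule algebra $B$ is \emph{cocyclic}, not cyclic (see the definition just before Proposition~\ref{CrossedProduct}: the face maps raise degree). Hence
\[
  HC^q_{\rm Hopf}(B,M) = {\rm Ext}^q_\Lambda(k_\bullet, C_\bullet(B,M)),
\]
not ${\rm Ext}^q_\Lambda(C_\bullet(B,M),k_\bullet^\vee)$ as you wrote, and a representing Yoneda sequence for $\nu$ must have the form
\[
  0\xla{}k_\bullet\xla{}V^1_\bullet\xla{}\cdots\xla{}V^q_\bullet\xla{}C_\bullet(B,M)\xla{}0
\]
in the category of cocyclic $k$-modules. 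Your sequence for $\nu$ lives in the wrong category, so the contravariant functor $diag\,{\rm Hom}_k(\,\cdot\,,C_\bullet(A,M))$ cannot be applied to your $\xi$ (whose terms are cyclic) as in your final formula; it must be applied to $\nu$.

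With this correction the proof is precisely the paper's: apply $diag\,{\rm Hom}_k(\,\cdot\,,C_\bullet(A,M))$ to $\nu$, use $diag\,{\rm Hom}_k(k_\bullet,C_\bullet(A,M))\cong C_\bullet(A,M)$, splice with $\xi$ at $C_\bullet(A,M)$ to obtain a class in ${\rm Ext}^{p+q}_\Lambda(diag\,{\rm Hom}_k(C_\bullet(B,M),C_\bullet(A,M)),k_\bullet^\vee)$, and then pull back along $\beta_\bullet$ via ${\rm Ext}^{p+q}_\Lambda(\beta_\bullet,k_\bullet^\vee)$. Your remarks about where the SAYD hypothesis enters (namely in Proposition~\ref{CrossedProduct}) and about the sign $(-1)^{pq}$ for the reversed pairing are correct.
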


\begin{proof}
  Assume we have two exact sequences
  \begin{align*}
    \nu\colon & 0\xla{} k_\bullet\xla{}V^1_\bullet\xla{}\cdots\xla{}V^p_\bullet\xla{}C_\bullet(B,M)\xla{}0\\
    \nu'\colon & 0\xla{} C_\bullet(A,M)\xla{}W^1_\bullet\xla{}\cdots\xla{}W^q_\bullet\xla{}k_\bullet^\vee\xla{}0
  \end{align*}
  representing two cyclic classes $\nu\in HC^p_{\rm Hopf}(A,M)$ and
  $\nu'\in HC^q_{\rm Hopf}(B,M)$.  We construct
  \begin{align*}
    diag{\rm Hom}_k(\nu,C_\bullet(A,M))\colon 0\xla{}diag{\rm Hom}_k(C_\bullet(C,M),C_\bullet(A,M))\xla{}
    diag{\rm Hom}_k(V^p_\bullet,C_\bullet(A,M))\xla{}\\
    \cdots\xla{}diag{\rm Hom}_k(V^1_\bullet,C_\bullet(A,M))\xla{}C_\bullet(A,M)\xla{}0
  \end{align*}
  and then splice it with $\nu'$ at $C_\bullet(A,M)$ to get a class in
  $HC^{p+q}(diag{\rm Hom}_k(C_\bullet(B,M),C_\bullet(A,M)))$.  Now we
  use the morphism $\beta_\bullet$ of cyclic modules we constructed in
  Proposition~\ref{CrossedProduct} to get a class in
  $HC^{p+q}(A\rtimes B)$.
\end{proof}

\begin{defn}
  Given an $H$-module coalgebra $C$ and an $H$-comodule coalgebra $Z$, we
  define the crossed product coalgebra $Z\ltimes C$ as follows: we let
  $Z\ltimes C := Z\otimes C$ on the $k$-module level.  The counit on $Z\ltimes
  C$ is the tensor product of counits on $C$ and $Z$ respectively.  The
  comultiplication structure is defined by the formula
  \[ (z,c)_{(1)}\otimes(z,c)_{(2)} := (z_{(1)},z_{(2)[-1]}c_{(1)})\otimes (z_{(2)[0]},c_{(2)}) \]
  for any $(c,z)\in Z\ltimes C$.  For coassociativity, one must have
  \begin{align*}
    (z,c)_{(1)(1)}\otimes(z,c)_{(1)(2)}\otimes(z,c)_{(2)} 
    = & (z_{(1)},z_{(2)[-1]}c_{(1)})\otimes (z_{(2)[0]},c_{(2)})_{(1)}\otimes (z_{(2)[0]},c_{(2)})_{(2)}\\
    = & (z_{(1)},z_{(2)[-1]}c_{(1)})\otimes (z_{(2)[0](1)},z_{(2)[0](2)[-1]}c_{(2)})\otimes
          (z_{(2)[0](2)[0]},c_{(3)})
  \end{align*}
  The compatibility conditions for comodule coalgebras give in
  Equation~(\ref{ComoduleCoalgebra}) imply
  \[ z_{[-1]}\otimes z_{[0](1)}\otimes z_{[0](2)[-1]}\otimes z_{[0](2)[0]}
   = z_{(1)[-1]}z_{(2)[-1]}\otimes z_{(1)[0]}\otimes z_{(2)[-1]}\otimes z_{(2)[0]}
  \]
  This in turn yields
  \begin{align*}
    (z,c)_{(1)(1)}\otimes(z,c)_{(1)(2)}\otimes(z,c)_{(2)} 
    = & 
        (z_{(1)},z_{(2)[-1]}z_{(3)[-1]}c_{(1)})\otimes (z_{(2)[0]},z_{(3)[-1]}c_{(2)})\otimes
          (z_{(3)[0]},c_{(3)})\\
    = & (z,c)_{(1)}\otimes (z,c)_{(2)(1)}\otimes (z,c)_{(2)(2)}
  \end{align*}
  for any $z\in Z$ and $c\in C$ as we wanted to show.
\end{defn}

\begin{prop}\label{CocrossedProduct}
  Assume $M$ is an SAYD module.  Let us define $\xi_n$ by the formula \small
  \begin{align*}
    \xi_n((z^0,c^0) & \otimes\cdots\otimes(z^n,c^n))(f)\\
     := & S^{-1}(z^0_{[-1]}\cdots z^n_{[-1]})c^0\otimes S^{-1}(z^1_{[-2]}\cdots z^n_{[-2]})c^1\otimes\cdots\otimes
          S^{-1}(z^n_{[-n-1]})c^n\otimes
          f(z^0_{[0]}\otimes\cdots\otimes z^n_{[0]})\\
      = & S^{-1}(z^0_{[-1]}\cdots z^{n-1}_{[-1]})c^0\otimes S^{-1}(z^1_{[-2]}\cdots z^{n-1}_{[-2]})c^1\otimes\cdots\otimes
          S^{-1}(z^{n-1}_{[-n]})c^{n-1}\otimes c^n\otimes z^n_{[-1]} 
          f(z^0_{[0]}\otimes\cdots\otimes z^n_{[0]})
  \end{align*}
  \normalsize for any $n\geq 0$, $c^i\in C$, $z^i\in C$ and $f\in
  C_n(Z,M)$.  Then $\xi_\bullet$ is a morphism of cocyclic modules of
  the form
  \[ \xi_\bullet\colon Cyc_\bullet(Z\ltimes C)\to diag{\rm Hom}_k(C_\bullet(Z,M),C_\bullet(C,M)) \]
\end{prop}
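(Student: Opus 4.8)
The plan is to verify directly that $\xi_\bullet$ commutes with each of the cocyclic structure maps, exactly in the spirit of the proofs of Proposition~\ref{Lift} and Proposition~\ref{CrossedProduct} (here $Cyc_\bullet(Z\ltimes C)$ is cocyclic because $Z\ltimes C$ is a coalgebra, and $diag{\rm Hom}_k(C_\bullet(Z,M),C_\bullet(C,M))$ carries the evident cocyclic structure built from the cyclic module $C_\bullet(Z,M)$ and the cocyclic module $C_\bullet(C,M)$). In fact $\xi_\bullet$ is the coalgebra-side mirror of the morphism $\beta_\bullet$ of Proposition~\ref{CrossedProduct}: its formula is obtained by transposing the roles of algebra and coalgebra, interchanging the module algebra $A$ with the module coalgebra $C$ and the comodule algebra $B$ with the comodule coalgebra $Z$, turning each multiplication into a comultiplication and threading the $H$-coactions coming from $Z^{\otimes n+1}$ through the slots of $C^{\otimes n+1}$ in the triangular pattern displayed in the statement. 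Consequently the identities one needs are the transposes of those used for $\beta_\bullet$, and I would organise the proof the same way.

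First I would record that the two expressions for $\xi_n$ given in the statement agree. This is immediate from the stability condition $m_{(-1)}m_{(0)}=m$ on $M$ together with $f\in{\rm Hom}_{H\text{-comod}}(Z^{\otimes n+1},M)$: the comodule-map property of $f$ lets one pull the product of outermost coactions $S^{-1}(z^0_{[-1]}\cdots z^n_{[-1]})$ onto $f(z^0_{[0]}\otimes\cdots\otimes z^n_{[0]})$ as the single leg $z^n_{[-1]}$ together with the coactions already present on the remaining factors, and stability collapses what is left. Next I would check that $\xi_n((z^0,c^0)\otimes\cdots\otimes(z^n,c^n))(f)$ really lands in $C_n(C,M)$: it is visibly an element of $C^{\otimes n+1}\otimes M = T_n(C,M)$, and one composes with the canonical projection $T_n(C,M)\to C_n(C,M)=k\otimes_H Q_n(C,M)$; the $H$-coactions appearing in the formula are exactly what the diagonal $H$-action on $C^{\otimes n+1}\otimes M$ annihilates after applying $k\otimes_H(-)$, so $\xi_n$ is well defined with this target.

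Then I would dispatch the cofaces and codegeneracies of $Cyc_\bullet(Z\ltimes C)$. For $\partial_0$ one substitutes the comultiplication $(z,c)_{(1)}\otimes(z,c)_{(2)}=(z_{(1)},z_{(2)[-1]}c_{(1)})\otimes(z_{(2)[0]},c_{(2)})$ into the $0$-th tensor slot, applies $\xi_n$, and then uses the comodule-coalgebra axiom~(\ref{ComoduleCoalgebra}) to reshuffle the iterated coactions until the outcome is recognisably $\partial_0^{C_\bullet(C,M)}$ applied to the $C$-part, precomposed with $\partial_0^{C_\bullet(Z,M)}$ on the argument of $f$; the general $\partial_j$ then follow from $\partial_j=\tau_{n-1}^j\partial_0\tau_n^{-j}$ once the $\tau$-compatibility is known. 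The codegeneracy $\sigma_0$ inserts $(1_Z,1_C)$ and matches the insertion of a counit slot; it is routine, using $\varepsilon(hc)=\varepsilon(h)\varepsilon(c)$ and counitality of $Z$. These are Sweedler-notation computations with no surprises.

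The genuine obstacle is compatibility with the cyclic operator $\tau_n$: on $Cyc_\bullet(Z\ltimes C)$ it rotates $(z^0,c^0)\otimes\cdots\otimes(z^n,c^n)$ to $(z^n,c^n)\otimes(z^0,c^0)\otimes\cdots\otimes(z^{n-1},c^{n-1})$, yet the triangular arrangement of coactions in $\xi_n$ changes shape entirely under this rotation, so one must show the discrepancy is absorbed precisely by (i) the SAYD compatibility $(hm)_{(-1)}\otimes(hm)_{(0)}=h_{(1)}m_{(-1)}S^{-1}(h_{(3)})\otimes h_{(2)}m_{(0)}$ and stability, (ii) the comodule-map property of $f$, used to pull the new coaction produced by $z^n$ through $f$, and (iii) the comodule-coalgebra axiom~(\ref{ComoduleCoalgebra}) together with the antipode identities for $S^{-1}$ and the Hopf algebra axioms for $H$. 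This is the analogue of the long final displayed computation in the proof of Proposition~\ref{CrossedProduct}, and I expect it to occupy the bulk of the argument; the triangular coaction pattern in $\xi_n$ was designed exactly so that these three ingredients repackage $\xi_n(\tau_n((z^0,c^0)\otimes\cdots\otimes(z^n,c^n)))(f)$ into $\tau_n^{C_\bullet(C,M)}\bigl(\xi_n((z^0,c^0)\otimes\cdots\otimes(z^n,c^n))(\tau_n f)\bigr)$. Once this identity is established, the proposition follows.
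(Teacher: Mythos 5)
Your proposal is correct and follows essentially the same route as the paper: the paper's proof consists solely of the explicit Sweedler verification of the $\tau_n$-compatibility (faces and degeneracies are explicitly left to the reader), using exactly the ingredients you list --- the SAYD condition on $M$, the $H$-comodule-map property of $f$, and the comodule-coalgebra axiom~(\ref{ComoduleCoalgebra}) --- together with the second form of the defining formula for $\xi_n$. The only caveat is that you defer the one computation that constitutes the paper's entire written proof, but your identification of the target identity $\xi_n(\tau_n(\cdot))(f)=\tau_n\bigl(\xi_n(\cdot)(\tau_n f)\bigr)$ and of the tools needed to establish it is accurate, and you in fact cover more ground (well-definedness, agreement of the two formulas, faces and degeneracies) than the paper does.
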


\begin{proof}
  We will prove that $\xi_\bullet$ and the cyclic operators are
  compatible but we will leave the verification of the fact that
  $\xi_\bullet$ is compatible with the face and degeneracy maps to the
  reader.  We also observe \small
  \begin{align*}
    \xi_n( & \tau_n((z^0,c^0)\otimes\cdots\otimes(z^n,c^n)))(f)\\
      = & \xi_n((z^1,c^1)\otimes\cdots\otimes (z^n,c^n)\otimes (z^0,c^0))(f)\\
      = & S^{-1}(z^1_{[-1]}\cdots z^n_{[-1]})c^1\otimes S^{-1}(z^2_{[-2]}\cdots z^n_{[-2]})c^2\otimes\cdots\otimes
          S^{-1}(z^n_{[-n]})c^n\otimes c^0\otimes 
          z^0_{[-1]} f(z^1_{[0]}\otimes\cdots\otimes z^n_{[0]}\otimes z^0_{[0]})\\
      = & S^{-1}(z^1_{[-2]}\cdots z^n_{[-2]})c^1\otimes S^{-1}(z^2_{[-3]}\cdots z^n_{[-3]})c^2
          \otimes\cdots\otimes S^{-1}(z^n_{[-n-1]})c^n\otimes\\
        & \quad  z^0_{[0][-4]} z^1_{[0][-1]}\cdots z^n_{[0][-1]} z^0_{[0][-1]} S^{-1}(z^0_{[0][-2]}) 
          S^{-1}(z^0_{[-1]}z^1_{[-1]}\cdots z^n_{[-1]})c^0\otimes 
          z^0_{[0][-3]} f(z^1_{[0][0]}\otimes\cdots\otimes z^n_{[0][0]}\otimes z^0_{[0][0]})\\
      = & \tau_n(\xi_n((z^0,c^0)\otimes\cdots\otimes (z^n,c^n))(\tau_n f))
  \end{align*}
  \normalsize
  for any $n\geq 0$, $z^i\in Z$, $c^i\in C$ and $f\in C_n(Z,M)$.
\end{proof}

\begin{thm}\label{CrossedProductCoalgebra}
  Fix an SAYD module $M$ over $H$.  For an $H$-module coalgebra $C$
  and an $H$-comodule coalgebra $Z$ one has a pairing of the form
  \[ HC^p(Z\ltimes C)\otimes HC_{\rm Hopf}^{\vee,q}(Z,M)\to HC_{\rm Hopf}^{p+q}(C,M) 
  \]
  for any $p,q\geq 0$ where $HC_{\rm Hopf}^{\vee,*}(Z,M)$ denotes the
  cyclic cohomology of the dual (cocyclic) module of the cyclic
  $k$-module $C_\bullet(Z,M)$.
\end{thm}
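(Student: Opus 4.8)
The plan is to run the argument of Theorem~\ref{HopfCupProduct}, with the morphism of cocyclic $k$-modules $\xi_\bullet$ of Proposition~\ref{CocrossedProduct} playing the role of the comparison map and the coalgebra $Z\ltimes C$ the role of the algebra $A$. First I record the Ext-theoretic descriptions of the three groups involved. Since $Z\ltimes C$ is a coalgebra, $Cyc_\bullet(Z\ltimes C)$ is a cocyclic $k$-module and $HC^p(Z\ltimes C)={\rm Ext}^p_\Lambda(k_\bullet,Cyc_\bullet(Z\ltimes C))$; likewise $HC^{p+q}_{\rm Hopf}(C,M)={\rm Ext}^{p+q}_\Lambda(k_\bullet,C_\bullet(C,M))$ because $C_\bullet(C,M)$ is cocyclic; and, arguing exactly as in the proof of Theorem~\ref{CommutativeExternalProduct} that ${\rm Ext}^*_\Lambda(k_\bullet^\vee,X_\bullet)\cong HC^{\vee,*}(X_\bullet)$ for a cyclic $k$-module $X_\bullet$, one has $HC^{\vee,q}_{\rm Hopf}(Z,M)={\rm Ext}^q_\Lambda(k_\bullet^\vee,C_\bullet(Z,M))$, where $C_\bullet(Z,M)$ is the cyclic $k$-module of the pair $(Z,M)$. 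As in the proof of Theorem~\ref{HopfCupProduct}, the bifunctor $diag{\rm Hom}_k(\ \cdot\ ,\ \cdot\ )$ — here regarded as sending a cyclic $k$-module and a cocyclic $k$-module to a cocyclic $k$-module, which is the structure appearing in the target of $\xi_\bullet$ — is exact in each variable because $k$ is a field, and there is a natural isomorphism of cocyclic $k$-modules $diag{\rm Hom}_k(k_\bullet^\vee,C_\bullet(C,M))\cong C_\bullet(C,M)$.

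Given classes $\mu\in HC^p(Z\ltimes C)$ and $\nu\in HC^{\vee,q}_{\rm Hopf}(Z,M)$, represent them by Yoneda extensions
\[ \mu\colon\ 0\xla{}k_\bullet\xla{}P^1_\bullet\xla{}\cdots\xla{}P^p_\bullet\xla{}Cyc_\bullet(Z\ltimes C)\xla{}0 \]
of cocyclic $k$-modules and
\[ \nu\colon\ 0\xla{}k_\bullet^\vee\xla{}R^1_\bullet\xla{}\cdots\xla{}R^q_\bullet\xla{}C_\bullet(Z,M)\xla{}0 \]
of cyclic $k$-modules. Applying the exact contravariant functor $diag{\rm Hom}_k(\ \cdot\ ,C_\bullet(C,M))$ to $\nu$ and invoking the isomorphism above yields a $q$-fold extension of cocyclic $k$-modules
\[ diag{\rm Hom}_k(\nu,C_\bullet(C,M))\colon\ 0\xla{}diag{\rm Hom}_k(C_\bullet(Z,M),C_\bullet(C,M))\xla{}diag{\rm Hom}_k(R^q_\bullet,C_\bullet(C,M))\xla{}\cdots\xla{}C_\bullet(C,M)\xla{}0 \]
that is, a class in ${\rm Ext}^q_\Lambda(diag{\rm Hom}_k(C_\bullet(Z,M),C_\bullet(C,M)),C_\bullet(C,M))$. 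Pulling it back along $\xi_\bullet\colon Cyc_\bullet(Z\ltimes C)\to diag{\rm Hom}_k(C_\bullet(Z,M),C_\bullet(C,M))$ via ${\rm Ext}^q_\Lambda(\xi_\bullet,C_\bullet(C,M))$ produces a class in ${\rm Ext}^q_\Lambda(Cyc_\bullet(Z\ltimes C),C_\bullet(C,M))$, which we splice with $\mu$ at $Cyc_\bullet(Z\ltimes C)$ to obtain a class in ${\rm Ext}^{p+q}_\Lambda(k_\bullet,C_\bullet(C,M))=HC^{p+q}_{\rm Hopf}(C,M)$. We define $\mu\smile\nu$ to be this class. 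Well-definedness and bilinearity are the usual formal properties of the Yoneda product together with functoriality of pushforward and pullback of extensions; equivalently, one may push $\mu$ forward along $\xi_\bullet$ first and then splice with $diag{\rm Hom}_k(\nu,C_\bullet(C,M))$ directly, the two recipes agreeing by functoriality of the Yoneda product.

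The only subtle point is the bookkeeping of cyclic versus cocyclic coefficients. The target of $\xi_\bullet$ carries the cocyclic structure coming from the \emph{dual} of the $diag{\rm Hom}_k$ construction (a cyclic first variable, a cocyclic second variable, and a cocyclic output), and it is with respect to this structure that one must verify the isomorphism $diag{\rm Hom}_k(k_\bullet^\vee,C_\bullet(C,M))\cong C_\bullet(C,M)$ and the exactness of $diag{\rm Hom}_k(\ \cdot\ ,C_\bullet(C,M))$ on short exact sequences of cyclic $k$-modules; both reduce to degreewise statements about ${\rm Hom}_k$ over the field $k$, so I expect this to be the main — and entirely routine — obstacle. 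Once it is out of the way, the remainder of the argument is identical in form to that of Theorem~\ref{HopfCupProduct}, the SAYD hypothesis on $M$ being needed only to make Proposition~\ref{CocrossedProduct} and the Hopf-(co)cyclic modules $C_\bullet(C,M)$ and $C_\bullet(Z,M)$ available.
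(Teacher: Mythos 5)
Your proposal is correct and follows the paper's own argument: represent the dual Hopf-cyclic class by a Yoneda extension ending in $C_\bullet(Z,M)$, apply the exact functor $diag{\rm Hom}_k(\ \cdot\ ,C_\bullet(C,M))$ (using $diag{\rm Hom}_k(k_\bullet^\vee,C_\bullet(C,M))\cong C_\bullet(C,M)$), pull back along $\xi_\bullet$ from Proposition~\ref{CocrossedProduct}, and compose (splice) with the class in $HC^p(Z\ltimes C)={\rm Ext}^p_\Lambda(k_\bullet,Cyc_\bullet(Z\ltimes C))$ via the Yoneda product. Your remark about the cyclic-input/cocyclic-output variant of $diag{\rm Hom}_k$ is the same routine bookkeeping the paper leaves implicit, so there is no substantive difference.
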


\begin{proof}
  Any class $[\nu]$ in $HC^{\vee,q}_{\rm Hopf}(Z,M)$ represented by an
  exact sequence of the form $\nu\colon
  0\xla{}k_\bullet^\vee\xla{}X^1_\bullet\xla{}
  \cdots\xla{}X^q_\bullet\xla{}C_\bullet(Z,M)\xla{}0$.  Now one can
  construct a class $diag{\rm Hom}_k(\nu,C_\bullet(C,M))$ in the
  bivariant cyclic cohomology group ${\rm Ext}^q_\Lambda 
  (diag {\rm Hom}_k(C_\bullet(Z,M),C_\bullet(C,M)), C_\bullet(C,M))$
  via
  \begin{align*}
    0\xla{}diag{\rm Hom}_k(C_\bullet(Z,M),C_\bullet(C,M))
      \xla{}diag{\rm Hom}_k & (X^q_\bullet,C_\bullet(C,M))\xla{}\cdots\\
      & \xla{}diag{\rm Hom}_k(X^1_\bullet,C_\bullet(C,M))\xla{}
            C_\bullet(C,M) \xla{}0
  \end{align*}
  By using $\xi_\bullet$ we defined in
  Proposition~\ref{CocrossedProduct} we obtain a class in ${\rm
    Ext}^q_\Lambda(Cyc_\bullet(Z\ltimes C),C_\bullet(C,M))$.  In other
  words, we have a morphism of graded modules of the form
  \[ HC_{\rm Hopf}^{\vee,*}(Z,M)\to 
     {\rm Ext}^*_\Lambda(Cyc_\bullet(Z\ltimes C),C_\bullet(C,M))
  \]
  Now pairing with the cyclic cohomology of the crossed product
  coalgebra $HC^p(Z\ltimes C) := {\rm
  Ext}^p_\Lambda(k_\bullet,Cyc_\bullet(Z\ltimes C))$ and using the
  Yoneda composition we get the desired cup product.
\end{proof}

\section{Comparison theorem}\label{ComparisonTheorem}

The first examples of pairings of the form~\ref{HopfCupProduct}
between Hopf-cyclic cohomology of a module algebra $A$ and the
comodule algebra $C=H$ is the Connes-Moscovici characteristic map (for
only $q=0$) \cite{ConnesMoscovici:HopfCyclicCohomology} and its
extension to differential graded setting by Gorokhovsky
\cite{Gorokhovsky:SecondaryCharacteristicClasses} for $q=0,1$ with
periodic cyclic cohomology but only for the 1-dimensional coefficient
module $k_{(\sigma,\delta)}$ coming from a modular pair in involution.
In \cite{Khalkhali:CupProducts}, Khalkhali and Rangipour defined two
cup products for arbitrary module algebras and coalgebras, and their
``cup product of the second kind'' agrees with Gorokhovsky's extended
characteristic map, and therefore with Connes-Moscovici characteristic
map as well, when the coefficient module is $k_{(\sigma,\delta)}$.
There are yet other ways of defining cup products in the context of
Hopf equivariant Cuntz-Quillen formalism due to Crainic
\cite{Crainic:CyclicCohomologyOfHopfAlgebras} and Nikonov-Sharygin
\cite{Sharygin:CupProducts} where the former constructs the cup
product only for $C=H$ and for the 1-dimensional SAYD module
$k_{(\sigma,\delta)}$ while the latter generalizes the construction to
arbitrary $H$-module coalgebras and arbitrary SAYD coefficient
modules.

Our approach to products is different than all of the approaches we
enumerate above in that all the cup products mentioned above use the
theory of abstract cycles and closed graded traces (see \cite[p.
183]{Connes:Book}, \cite[p. 74]{Loday:CyclicHomology} or
\cite{Khalkhali:CupProducts}), and/or the homotopy category of
(special) towers of super complexes.  Thanks to Quillen
\cite{Quillen:CyclicHomologyType} we know that the category of
(special) towers of super complexes is homotopy equivalent to the
category of mixed complexes and the category of $S$-modules.  These
homotopy/derived categories are different than the derived category of
(co)cyclic modules as we observed in the Introduction.  So far, we
explicitly used the derived category of (co)cyclic $k$-modules or
$H$-modules depending on whether we needed the bivariant Hopf or
equivariant bivariant cyclic cohomology.  An alternative approach
would be to develop a similar theory by using the derived category of
mixed complexes.  Recall that in the construction of various products,
we heavily relied on the exact bi-functor $diag{\rm Hom}_k(\ \cdot\ ,\
\cdot\ )$ which takes a pair of cocyclic and cyclic $k$-modules as an
input and which produces a cyclic $k$-module.  If we were to construct
similar products using the derived category of mixed complexes, we
need to mimic the same construction.

One can view the category of mixed complexes as the category of
differential graded $\C{M}$-modules where $\C{M}$ is the free graded
symmetric algebra of the graded vector space $k$ where the single
generator is assumed to have degree $1$.  We will suggestively use $B\in
\C{M}$ to denote this generator.  Then the derived category of
differential graded $\C{M}$-modules is the homotopy category of mixed
complexes~\cite{JonesKassel:BivariantCyclicTheory}.

\begin{lem}[\cite{Nistor:BivariantChernConnes} {Proposition 1.5}]\label{Nistor}
  The functor $\C{B}_*\colon \rmod{\Lambda}\to \dglmod{\C{M}}$ which
  sends a cyclic module to its mixed complex is an exact functor.
  Therefore it induces natural morphisms of derived bifunctors
  \[ {\rm Ext}^*_\Lambda(\ \cdot\ ,\ \cdot\ )
     \to {\rm\bf Ext}^*_\C{M}(\ \cdot\ , \ \cdot \ )
  \]
  where ${\rm\bf Ext}_\C{M}$ stands for the derived functor of the
  Hom-bifunctor of the category of differential graded
  $\C{M}$-modules.
\end{lem}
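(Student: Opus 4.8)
This is \cite[Proposition 1.5]{Nistor:BivariantChernConnes}; for completeness I recall the argument, which has two independent halves: first that $\C{B}_*$ is an exact functor, and second that any exact functor between abelian categories induces a natural morphism of the associated (Yoneda) $\mathrm{Ext}$-bifunctors.

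For the first half, recall that the mixed complex $\C{B}_*(X_\bullet)$ of a cyclic $k$-module $X_\bullet$ has underlying graded $k$-module $\bigoplus_{n\geq 0}X_n$ equipped with the Hochschild boundary $b$ of degree $-1$ and Connes' operator $B$ of degree $+1$, both assembled out of the faces, degeneracies and cyclic operators of $X_\bullet$; since each of these structure maps is a natural transformation on $\rmod{\Lambda}$, the assignment $X_\bullet\mapsto\C{B}_*(X_\bullet)$ is indeed a functor $\rmod{\Lambda}\to\dglmod{\C{M}}$. Now $\rmod{\Lambda}$ is an abelian category in which kernels and cokernels are computed degreewise, so that the forgetful functor $X_\bullet\mapsto\bigoplus_n X_n$ to graded $k$-modules is exact; and $\dglmod{\C{M}}$ is likewise abelian with (co)kernels computed on underlying graded modules. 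Because $\C{B}_*$ does not alter the underlying graded $k$-module — it only adjoins the differentials $b$ and $B$ — a short exact sequence $0\to X_\bullet\to Y_\bullet\to Z_\bullet\to 0$ in $\rmod{\Lambda}$ is carried to a sequence $0\to\C{B}_*(X_\bullet)\to\C{B}_*(Y_\bullet)\to\C{B}_*(Z_\bullet)\to 0$ of dg $\C{M}$-modules which is exact on underlying graded modules, hence exact. Thus $\C{B}_*$ is exact.

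For the second half I would use the Yoneda presentation of $\mathrm{Ext}$ in both categories, consistently with the rest of the paper and so as to avoid choosing resolutions. A class in ${\rm Ext}^n_\Lambda(X_\bullet,Y_\bullet)$ is an equivalence class of exact sequences $0\xla{}X_\bullet\xla{}E^1_\bullet\xla{}\cdots\xla{}E^n_\bullet\xla{}Y_\bullet\xla{}0$ in $\rmod{\Lambda}$, the equivalence relation being generated by morphisms of such sequences fixing the two ends, with Baer sum and splicing furnishing the group law and the Yoneda product. Applying the exact functor $\C{B}_*$ term by term sends such a sequence to an exact sequence of dg $\C{M}$-modules with ends $\C{B}_*(X_\bullet)$ and $\C{B}_*(Y_\bullet)$, hence to a class in ${\rm\bf Ext}^n_\C{M}(\C{B}_*(X_\bullet),\C{B}_*(Y_\bullet))$. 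Since $\C{B}_*$ is a functor, it takes morphisms of extensions to morphisms of extensions, Baer sums to Baer sums, and spliced sequences to spliced sequences; consequently the assignment descends to a well-defined homomorphism ${\rm Ext}^n_\Lambda(X_\bullet,Y_\bullet)\to{\rm\bf Ext}^n_\C{M}(\C{B}_*(X_\bullet),\C{B}_*(Y_\bullet))$ which is compatible with Yoneda composition and natural in both $X_\bullet$ and $Y_\bullet$.

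The only genuine content is the exactness of $\C{B}_*$; once that is established the comparison map and all its compatibilities are formal. The point to be careful about is precisely the use of the Yoneda picture: it lets one build the morphism ${\rm Ext}^*_\Lambda\to{\rm\bf Ext}^*_\C{M}$ directly from exactness, without invoking projective or injective resolutions in $\rmod{\Lambda}$ or $\dglmod{\C{M}}$, and it is this form of the lemma that will be applied in the sequel.
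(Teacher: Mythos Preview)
The paper does not supply its own proof of this lemma; it simply records the statement with a citation to \cite[Proposition~1.5]{Nistor:BivariantChernConnes} and moves on. Your argument is correct and is essentially the standard one: exactness of $\C{B}_*$ is immediate because the functor is the identity on underlying graded $k$-modules, and the induced map on Yoneda Ext-groups then follows formally from the fact that an exact functor carries $n$-extensions to $n$-extensions and morphisms of extensions to morphisms of extensions. There is nothing to compare against in the paper itself.
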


A bi-differential graded $k$-module $(X_{*,*};b_1,b_2)$ is called a
mixed double complex if there exists differentials $B_1$ and $B_2$ of
degree $1$ such that $[b_i,b_j]=[b_i,B_j]=[B_i,B_j]=0$ whenever $i\neq
j$ and $b_i B_i + B_i b_i = 0$ for $i=1,2$.  For any mixed double
complex $(X_{*,*};b_1,b_2,B_1,B_2)$ we will use $Tot_*(X_{*,*})$ to
denote the mixed complex over the total complex of the bi-differential
graded module $X_{*,*}$ with the degree $1$ differential $B = B_1 +
B_2$.

\begin{lem}\label{GetzlerJones}
  Assume $X_\bullet$ is an arbitrary cocyclic $k$-module and
  $Y_\bullet$ is an arbitrary cyclic $k$-module.  Then in the derived
  category of mixed complexes there is an isomorphism of the form
  \[ \eta_*\colon  Tot_* {\rm Hom}_k(\C{B}_*(X_\bullet),\C{B}_*(Y_\bullet))
     \to
     \C{B}_*(diag{\rm Hom}_k(X_\bullet,Y_\bullet)) 
  \]
\end{lem}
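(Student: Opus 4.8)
The plan is to construct $\eta_*$ explicitly at the level of mixed double complexes and then show it is a quasi-isomorphism; since quasi-isomorphisms of mixed complexes become isomorphisms in the derived category, this suffices. First I would unwind both sides. For a cocyclic module $X_\bullet$ the mixed complex $\C{B}_*(X_\bullet)$ has $b$ of degree $-1$ (or $+1$ in the cocyclic convention) built from the alternating sum of cofaces and a Connes $B$-operator of degree $+1$ built from the cyclic operator and the extra degeneracy; likewise for the cyclic module $Y_\bullet$. The graded $k$-module ${\rm Hom}_k(\C{B}_*(X_\bullet),\C{B}_*(Y_\bullet))$ then carries two anticommuting $b$-type differentials (one from the source, one from the target) and two $B$-type differentials, so it is naturally a mixed double complex in the sense just defined, and $Tot_*$ of it is the mixed complex appearing on the left. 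On the other side, $diag{\rm Hom}_k(X_\bullet,Y_\bullet)$ is a cyclic $k$-module by the Definition recalled earlier in the paper, so $\C{B}_*$ applied to it is again a mixed complex. The map $\eta_*$ in degree $n$ should be the obvious "assemble the components" map: a homogeneous element of $Tot_n{\rm Hom}_k(\C{B}_*(X_\bullet),\C{B}_*(Y_\bullet))$ is a family $(f_{p,q})_{p-q=n}$ (or $p+q=n$, according to sign conventions) with $f_{p,q}\in{\rm Hom}_k(X_p,Y_q)$, and the diagonal Hochschild complex of $diag{\rm Hom}_k(X_\bullet,Y_\bullet)$ in degree $n$ is spanned by ${\rm Hom}_k(X_n,Y_n)$; the comparison is essentially the classical Eilenberg--Zilber / shuffle-versus-Alexander--Whitney equivalence between the total complex of a bicomplex of $\mathrm{Hom}$'s and the diagonal, adapted to the mixed (i.e. $B$-operator) structure.

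Concretely, the key steps in order are: (1) verify that ${\rm Hom}_k(\C{B}_*(X_\bullet),\C{B}_*(Y_\bullet))$ is a mixed double complex, i.e. exhibit $b_1,b_2,B_1,B_2$ and check the four bracket relations and $b_iB_i+B_ib_i=0$ — this is a direct computation using that $\C{B}_*(X_\bullet)$ and $\C{B}_*(Y_\bullet)$ are themselves mixed complexes and that pre/post-composition of anticommuting differentials still anticommute; (2) identify the normalized (or unnormalized) Hochschild-type $b$-complex of $diag{\rm Hom}_k(X_\bullet,Y_\bullet)$ with the total $b_1+b_2$ complex of the double complex, which is the Eilenberg--Zilber theorem applied levelwise to the bicosimplicial-by-bisimplicial object ${\rm Hom}_k(X_\bullet,Y_\bullet)$ — here one uses Alexander--Whitney maps in the $X$ variable and shuffle maps in the $Y$ variable to produce mutually inverse chain homotopy equivalences $\eta_*$ and a homotopy inverse; (3) upgrade this to the mixed setting, i.e. show that $\eta_*$ intertwines the total Connes operator $B=B_1+B_2$ on the left with the Connes operator $B$ of the cyclic module $diag{\rm Hom}_k(X_\bullet,Y_\bullet)$ on the right, at least up to an explicit homotopy; and (4) invoke the fact that in the derived category of mixed complexes ($=$ differential graded $\C{M}$-modules, by Jones--Kassel) a map which is a quasi-isomorphism on underlying complexes and commutes with $B$ up to coherent homotopy is an isomorphism, so $\eta_*$ descends to the desired iso.

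The main obstacle will be step (3): compatibility with the $B$-operators. The Eilenberg--Zilber equivalence in step (2) uses Alexander--Whitney on one factor and shuffle on the other, and these are notoriously \emph{not} strictly compatible with the cyclic/Connes structure — the shuffle map is cyclic only up to an explicit homotopy (this is exactly the phenomenon underlying the cyclic Eilenberg--Zilber theorem of Khalkhali--Rangipour / Hood--Jones). So one should not expect $\eta_*$ to be a strict map of mixed complexes on the nose; rather one either (i) carefully chooses $\eta_*$ and its homotopy inverse and produces the extra homotopies witnessing $B$-compatibility, packaging the data as a morphism in the derived category, or (ii) argues more cheaply: since $\C{B}_*$ is exact (Lemma~\ref{Nistor}) and $diag{\rm Hom}_k(\ \cdot\ ,\ \cdot\ )$ is exact in each variable, both sides are the total derived functors of the same bifunctor evaluated on $X_\bullet,Y_\bullet$, and the levelwise Eilenberg--Zilber map already gives a natural transformation which is a quasi-isomorphism, hence an isomorphism after passing to the homotopy category of $\C{M}$-modules where the $B$-action is part of the $\C{M}$-module structure being automatically respected in the derived sense. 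Approach (ii) is cleaner and is presumably what is intended, so I would organize the write-up around it, relegating the explicit Alexander--Whitney/shuffle formulas to a remark.
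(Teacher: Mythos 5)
Your overall strategy is the right one, and in fact the paper's proof is essentially your ``approach (i)'' compressed into a citation: it observes that ${\rm Hom}_k(X_\bullet,Y_\bullet)$ is a bi-cyclic module and invokes the cyclic Eilenberg--Zilber theorem of Getzler--Jones (Theorem 3.1 of \cite{GetzlerJones:CyclicHomologyOfCrossedProductAlgebras}, applied with $T={\rm id}$) to produce $\eta_*$ as an isomorphism in the derived category of mixed complexes. You correctly identify the central difficulty, namely that the shuffle/Alexander--Whitney equivalence is not strictly compatible with the Connes $B$-operators and must be corrected by higher homotopies; that correction is exactly the content of the cited theorem, so there is no need to reprove it. One subtlety you gloss over, and which the paper is explicit about: the bi-cyclic module ${\rm Hom}_k(X_\bullet,Y_\bullet)$ is \emph{not} in general the tensor product of a cyclic with a cocyclic module (this would require each $X_n$ to be finite dimensional), so the Hood--Jones product theorem you allude to does not apply directly; one genuinely needs the Getzler--Jones version for cylindrical (here, bi-cyclic) modules.

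The genuine gap is in your ``approach (ii)'', which you declare to be cleaner and ``presumably what is intended''. It is not, and as stated it does not work. The claim that a levelwise quasi-isomorphism of underlying $b$-complexes becomes an isomorphism in the derived category of $\C{M}$-modules ``where the $B$-action is automatically respected in the derived sense'' begs the question: to have a morphism in that derived category at all you must first produce a morphism of dg $\C{M}$-modules (or a zig-zag of such, or the full system of coherent homotopies making the EZ map into one), and producing those homotopies \emph{is} the cyclic Eilenberg--Zilber theorem. Likewise the assertion that both sides are ``total derived functors of the same bifunctor'' is not substantiated: exactness of $\C{B}_*$ and of $diag{\rm Hom}_k$ tells you nothing about why $Tot_*{\rm Hom}_k(\C{B}_*(-),\C{B}_*(-))$ and $\C{B}_*(diag{\rm Hom}_k(-,-))$ agree, since they are not presented as derived functors of a common underived functor. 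If you drop approach (ii) and either carry out approach (i) or simply cite Getzler--Jones, the proof is complete.
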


\begin{proof}
  We know that ${\rm Hom}_k(X_\bullet,Y_\bullet)$ is a bi-cyclic
  $k$-module.  If this bi-cyclic module was a product of two cyclic
  modules then we could have used
  \cite{HoodJones:ProductCyclicCohomology}.  Unless $X_n$ is finite
  dimensional for any $n\geq 0$, this is not the case.  But, we can
  still use \cite[Theorem
  3.1]{GetzlerJones:CyclicHomologyOfCrossedProductAlgebras} with
  $T=id$ to get the desired isomorphism $\eta_*$.
\end{proof}

\begin{lem}~\label{Machinery}
  Let $M$ be an arbitrary $H$-module/comodule.  Assume $A$ is an
  $H$-module algebra, $B$ is an $H$-comodule algebra and $C$ is a
  $H$-module coalgebra.  Then there are morphisms of mixed complexes
  of the form \small
  \begin{align*}
    Tot_* {\rm Hom}_k(\C{B}_*(C_\bullet(C,M)),\C{B}_*(C_\bullet(A,M)))
    \xra{\eta_*} \C{B}_*(diag{\rm Hom}_k(C_\bullet(C,M),C_\bullet(A,M)))
    \xla{\C{B}_*(\alpha_\bullet)} \C{B}_*(Cyc_\bullet(A))\\
    Tot_* {\rm Hom}_k(\C{B}_*(C_\bullet(B,M)),\C{B}_*(C_\bullet(A,M)))
    \xra{\eta_*} \C{B}_*(diag{\rm Hom}_k(C_\bullet(B,M),C_\bullet(A,M)))
    \xla{\C{B}_*(\beta_\bullet)} \C{B}_*(Cyc_\bullet(A\rtimes B))
  \end{align*} \normalsize
  where on the second row, we require $M$ to be SAYD as well.
\end{lem}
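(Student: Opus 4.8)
The plan is to obtain both chains of morphisms essentially formally, by transporting across $\C{B}_*$ the morphisms of cyclic $k$-modules $\alpha_\bullet$ and $\beta_\bullet$ already constructed in Proposition~\ref{UniversalClass} and Proposition~\ref{CrossedProduct}, and then splicing in the comparison map $\eta_*$ of Lemma~\ref{GetzlerJones}. Three previously established facts are all that is needed: first, $\C{B}_*$ is a functor $\rmod{\Lambda}\to\dglmod{\C{M}}$ (Lemma~\ref{Nistor}); second, for any cocyclic $X_\bullet$ and cyclic $Y_\bullet$ one has the isomorphism $\eta_*\colon Tot_*{\rm Hom}_k(\C{B}_*(X_\bullet),\C{B}_*(Y_\bullet))\to\C{B}_*(diag{\rm Hom}_k(X_\bullet,Y_\bullet))$ in the derived category of mixed complexes (Lemma~\ref{GetzlerJones}); and third, $\alpha_\bullet$ and $\beta_\bullet$ are morphisms of cyclic modules with the stated sources and targets.

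The steps, in order, would be: (i) record that the modules appearing are honest (co)cyclic $k$-modules, namely that $C_\bullet(C,M)=k\otimes_H Q_\bullet(C,M)$ and the Hopf-cocyclic module $C_\bullet(B,M)$ are cocyclic while $C_\bullet(A,M)$, $Cyc_\bullet(A)$ and $Cyc_\bullet(A\rtimes B)$ are cyclic; (ii) apply Lemma~\ref{GetzlerJones} with $(X_\bullet,Y_\bullet)=(C_\bullet(C,M),C_\bullet(A,M))$ and then with $(X_\bullet,Y_\bullet)=(C_\bullet(B,M),C_\bullet(A,M))$ to produce the two instances of $\eta_*$, i.e. the left-pointing arrows in the two displays; (iii) apply the functor $\C{B}_*$ of Lemma~\ref{Nistor} to $\alpha_\bullet\colon Cyc_\bullet(A)\to diag{\rm Hom}_k(C_\bullet(C,M),C_\bullet(A,M))$ and to $\beta_\bullet\colon Cyc_\bullet(A\rtimes B)\to diag{\rm Hom}_k(C_\bullet(B,M),C_\bullet(A,M))$, obtaining $\C{B}_*(\alpha_\bullet)$ and $\C{B}_*(\beta_\bullet)$ as morphisms of mixed complexes, which are the right-pointing arrows; (iv) concatenate. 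Nothing in steps (ii)--(iv) is more than functoriality together with the cited lemmas.

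The one place where I expect to have to argue rather than cite, and which I regard as the only real content of this lemma, is the compatibility of hypotheses on $M$ recorded in step (i). For the first row $M$ may be an arbitrary $H$-module/comodule: the cocyclic structure on $C_\bullet(C,M)$, the cyclic structure on $C_\bullet(A,M)$ and the construction of $\alpha_\bullet$ in Proposition~\ref{UniversalClass} use no stability or anti-Yetter-Drinfeld condition. For the second row, stability of $M$ is precisely what makes $\tau_n^{n+1}={\rm id}$ hold on $C_\bullet(B,M)$ (and on $C_\bullet(A,M)$), so that these are genuine (co)cyclic rather than merely para-(co)cyclic modules, and the full SAYD condition is exactly what was used in the proof of Proposition~\ref{CrossedProduct} to check that $\beta_\bullet$ intertwines the cyclic operators; this accounts for the extra hypothesis ``$M$ SAYD'' on the second row. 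Once this is noted, the two chains of morphisms of mixed complexes follow immediately by composing the maps supplied in steps (ii) and (iii).
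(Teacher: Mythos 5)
Your proposal is correct and follows essentially the same route as the paper: the arrows labelled $\C{B}_*(\alpha_\bullet)$ and $\C{B}_*(\beta_\bullet)$ are obtained by applying the functor $\C{B}_*$ of Lemma~\ref{Nistor} to the morphisms from Proposition~\ref{UniversalClass} and Proposition~\ref{CrossedProduct}, while the $\eta_*$ arrows come from Lemma~\ref{GetzlerJones}; your additional remarks on which hypotheses on $M$ each row needs are consistent with the paper. The only slip is cosmetic: you call the $\eta_*$ arrows ``left-pointing'' and the $\C{B}_*(\alpha_\bullet)$, $\C{B}_*(\beta_\bullet)$ arrows ``right-pointing,'' whereas in the displayed statement the orientations are the opposite.
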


\begin{proof}
  Left-ward arrows come from applying the functor $\C{B}_*$ to
  $\alpha_\bullet$ we constructed in Proposition~\ref{Lift} and
  Proposition~\ref{UniversalClass}, and to $\beta_\bullet$ which is
  constructed in Proposition~\ref{CrossedProduct}.  The right-ward
  arrows come from Lemma~\ref{GetzlerJones}.
\end{proof}

\begin{thm}\label{AnalogousPairings}
  One can define pairings analogous to the pairings we defined in
  Theorem~\ref{HopfCupProduct}, Theorem~\ref{EquivariantProduct} and
  Theorem~\ref{CocommutativeExternalProduct} by using the derived
  category of mixed complexes instead of the derived category of
  cyclic modules.  However, these pairings defined in the derived
  category of mixed complexes agree with those defined in the derived
  category of cyclic modules.
\end{thm}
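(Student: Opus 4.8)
The plan is to proceed as follows.

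The strategy is to observe that all three pairings in Theorem~\ref{HopfCupProduct}, Theorem~\ref{EquivariantProduct} and Theorem~\ref{CocommutativeExternalProduct} were built from exactly two ingredients: (i) the exact bifunctor $diag{\rm Hom}_k(\ \cdot\ ,\ \cdot\ )$ applied to Yoneda representatives, producing a spliced exact sequence of (co)cyclic modules, and (ii) the morphism of cyclic modules $\alpha_\bullet$ of Proposition~\ref{Lift} (or the external-product morphism $*$), against which one functorially pushes the resulting Yoneda class. So to define the analogous pairing in the derived category of mixed complexes, I would first replace ``Yoneda representative of an ${\rm Ext}_\Lambda$-class'' by ``element of ${\rm\bf Ext}_\C{M}$'' via the natural transformation ${\rm Ext}^*_\Lambda(\ \cdot\ ,\ \cdot\ )\to {\rm\bf Ext}^*_\C{M}(\ \cdot\ ,\ \cdot\ )$ coming from Lemma~\ref{Nistor}, and then replace the splicing step using $diag{\rm Hom}_k$ by the composition of the morphism $Tot_*{\rm Hom}_k(\C{B}_*(\ \cdot\ ),\C{B}_*(\ \cdot\ ))$ on mixed complexes with the comparison isomorphism $\eta_*$ of Lemma~\ref{GetzlerJones}. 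This is precisely what Lemma~\ref{Machinery} packages: the zig-zag $Tot_*{\rm Hom}_k(\C{B}_*(C_\bullet(C,M)),\C{B}_*(C_\bullet(A,M)))\xra{\eta_*}\C{B}_*(diag{\rm Hom}_k(C_\bullet(C,M),C_\bullet(A,M)))\xla{\C{B}_*(\alpha_\bullet)}\C{B}_*(Cyc_\bullet(A))$ gives me, after inverting $\eta_*$ in the derived category, a morphism in the derived category of $\C{M}$-modules from which the mixed-complex pairing is assembled by composing ${\rm\bf Ext}_\C{M}$-classes.

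The second half — that these pairings \emph{agree} with the ones already defined — I would prove by a naturality/compatibility diagram chase. The key point is that $\C{B}_*$ is exact (Lemma~\ref{Nistor}), so it carries a Yoneda splice of exact sequences of (co)cyclic modules to the corresponding splice of exact sequences of mixed complexes, and the induced map on ${\rm Ext}$ is exactly the natural transformation of Lemma~\ref{Nistor}. Thus applying $\C{B}_*$ to the entire construction of Theorem~\ref{HopfCupProduct} (form $diag{\rm Hom}_k(\xi,C_\bullet(A,M))$, splice with $\nu$, push along $\alpha_\bullet$) produces a class in ${\rm\bf Ext}_\C{M}$ which, on the one hand, is the image of $\xi\smile\nu$ under the natural transformation, and on the other hand, by the commutativity of the square relating $\C{B}_*(diag{\rm Hom}_k(-,-))$ and $Tot_*{\rm Hom}_k(\C{B}_*(-),\C{B}_*(-))$ via $\eta_*$, coincides with the mixed-complex pairing. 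So the proof reduces to checking two commuting squares: one expressing that $\eta_*$ of Lemma~\ref{GetzlerJones} is natural in both variables (so that $\eta_*$ intertwines $Tot_*{\rm Hom}_k(\C{B}_*(\xi),-)$ with $\C{B}_*(diag{\rm Hom}_k(\xi,-))$ for a Yoneda representative $\xi$), and one expressing that the comparison natural transformation ${\rm Ext}_\Lambda\to{\rm\bf Ext}_\C{M}$ is compatible with Yoneda composition. The equivariant version (Theorem~\ref{EquivariantProduct}) and the external-product version (Theorem~\ref{CocommutativeExternalProduct}) are handled identically, replacing $\alpha_\bullet$ by its $H$-equivariant avatar and invoking the spectral-sequence splitting of ${\rm Ext}_{H[\Lambda]}(Cyc_\bullet(A),k_\bullet^\vee)$, which is itself natural and hence survives the comparison.

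I would carry this out in the order: (1) define the mixed-complex pairings using Lemma~\ref{Nistor}, Lemma~\ref{GetzlerJones} and Lemma~\ref{Machinery}; (2) state and verify the naturality of $\eta_*$ in both arguments, so that for a fixed Yoneda morphism the square relating the two ``Hom'' constructions commutes in the derived category; (3) verify that the comparison transformation ${\rm Ext}_\Lambda\to{\rm\bf Ext}_\C{M}$ respects Yoneda splicing and composition; (4) conclude by chasing $\xi\smile\nu$ through the resulting commuting diagram. The main obstacle, I expect, is step (2): $\eta_*$ is only an isomorphism in the \emph{derived} category of mixed complexes (Lemma~\ref{GetzlerJones}), not an honest chain map compatible on the nose with ${\rm Hom}$ in each variable, so I must be careful that the naturality square commutes \emph{up to coherent homotopy} and that these homotopies do not obstruct the identification of the spliced classes — equivalently, I must check that the two maps $\C{B}_*(Cyc_\bullet(A))\to \C{B}_*(diag{\rm Hom}_k(C_\bullet(C,M),C_\bullet(A,M)))$ obtained via $\C{B}_*(\alpha_\bullet)$ and via $\eta_*$ composed with $Tot_*{\rm Hom}_k(\C{B}_*(\alpha_\bullet),-)$ agree in the derived category, which follows from the functoriality built into \cite[Theorem 3.1]{GetzlerJones:CyclicHomologyOfCrossedProductAlgebras} with $T=id$ but requires spelling out. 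Everything else is bookkeeping with exact functors and the already-established naturality of the comparison map.
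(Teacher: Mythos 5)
Your proposal follows essentially the same route as the paper: define the mixed-complex pairings by replacing $diag{\rm Hom}_k(\,\cdot\,,\,\cdot\,)$ with $Tot_*{\rm Hom}_k(\C{B}_*(\,\cdot\,),\C{B}_*(\,\cdot\,))$ via the zig-zag of Lemma~\ref{Machinery} and the isomorphism $\eta_*$ of Lemma~\ref{GetzlerJones}, then compare using the natural transformation of Lemma~\ref{Nistor} and conclude agreement because the two ways of computing the target cyclic cohomology (cyclic bicomplex versus $(b,B)$-complex) coincide. In fact you are more explicit than the paper about the naturality of $\eta_*$ and its compatibility with Yoneda splicing, points the paper's proof leaves implicit.
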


\begin{proof}
  The fact that $\eta_*$ is an isomorphism in the derived category of
  mixed complexes allows us to use $Tot_*{\rm Hom}_k(\C{B}_*(\ \cdot\
  ),\C{B}_*(\ \cdot\ ))$ as a replacement for $diag{\rm Hom}_k(\
  \cdot\ ,\ \cdot\ )$ in the category of mixed complexes.  Then
  Lemma~\ref{Nistor} gives us a comparison map between these pairings.
  However, since we are comparing cyclic cohomology of cyclic modules
  computed via the cyclic double complex and the $(b,B)$-complex, we
  see that these groups are the same.
\end{proof}




\begin{thebibliography}{10}

\bibitem{Connes:ExtFunctors}
A.~Connes.
\newblock Cohomologie cyclique et foncteurs {${\rm Ext}\sp n$}.
\newblock {\em C. R. Acad. Sci. Paris S\'er. I Math.}, 296(23):953--958, 1983.

\bibitem{Connes:Book}
A.~Connes.
\newblock {\em Noncommutative geometry}.
\newblock Academic Press Inc., 1994 (Also avaliable online at {\tt
  ftp://ftp.alainconnes.org/book94bigpdf.pdf}).

\bibitem{ConnesMoscovici:HopfCyclicCohomology}
A.~Connes and H.~Moscovici.
\newblock Hopf algebras, cyclic cohomology and transverse index theorem.
\newblock {\em Comm. Math. Phys.}, 198:199--246, 1998.

\bibitem{Crainic:CyclicCohomologyOfHopfAlgebras}
M.~Crainic.
\newblock Cyclic cohomology of {H}opf algebras.
\newblock {\em J. Pure Appl. Algebra}, 166(1-2):29--66, 2002.

\bibitem{CuntzQuillen:NonsingularityII}
J.~Cuntz and D.~Quillen.
\newblock Cyclic homology and nonsingularity.
\newblock {\em J. Amer. Math. Soc.}, 8(2):373--442, 1995.

\bibitem{GetzlerJones:CyclicHomologyOfCrossedProductAlgebras}
E.~Getzler and J.~D.~S. Jones.
\newblock The cyclic homology of crossed product algebras.
\newblock {\em J. Reine Angew. Math.}, 445:161--174, 1993.

\bibitem{Gorokhovsky:SecondaryCharacteristicClasses}
A.~Gorokhovsky.
\newblock Secondary characteristic classes and cyclic cohomology of {H}opf
  algebras.
\newblock {\em Topology}, 41(5):993--1016, 2002.

\bibitem{Khalkhali:HopfCyclicHomology}
P.~M. Hajac, M.~Khalkhali, B.~Rangipour, and Y.~Sommerh\"auser.
\newblock Hopf--cyclic homology and cohomology with coefficients.
\newblock {\em C. R. Math. Acad. Sci. Paris}, 338(9):667--672, 2004.

\bibitem{Khalkhali:SaYDModules}
P.~M. Hajac, M.~Khalkhali, B.~Rangipour, and Y.~Sommerh\"auser.
\newblock Stable {anti-Yetter--Drinfeld} modules.
\newblock {\em C. R. Math Acad. Sci. Paris}, 338(8):587--590, 2004.

\bibitem{HoodJones:ProductCyclicCohomology}
C.~E. Hood and J.~D.~S. Jones.
\newblock Some algebraic properties of cyclic homology groups.
\newblock {\em K-Theory}, (1):361--384, 1987.

\bibitem{JonesKassel:BivariantCyclicTheory}
J.~D.~S. Jones and C.~Kassel.
\newblock Bivariant cyclic theory.
\newblock {\em $K$-Theory}, 3(4):339--365, 1989.

\bibitem{Kassel:MixedComplexes}
C.~Kassel.
\newblock Cyclic homology, comodules, and mixed complexes.
\newblock {\em J. Algebra}, 107(1):195--216, 1987.

\bibitem{Kassel:BivariantChernCharacter}
C.~Kassel.
\newblock Caract\`ere de {C}hern bivariant.
\newblock {\em $K$-Theory}, 3(4):367--400, 1989.

\bibitem{Kaygun:UniversalHopfCyclicTheory}
A.~Kaygun.
\newblock {The universal {H}opf cyclic theory}.
\newblock Preprint at {\tt arXiv:math/0609311 [math.KT]}.

\bibitem{Kaygun:BialgebraCyclicK}
A.~Kaygun.
\newblock {Bialgebra cyclic homology with coefficients}.
\newblock {\em $K$--Theory}, 34(2):151--194, 2005.

\bibitem{Kaygun:BivariantHopf}
A.~Kaygun and M.~Khalkhali.
\newblock Bivariant {Hopf} cyclic cohomology.
\newblock Preprint at {\tt arXiv:math/0606341 [math.KT]}.

\bibitem{Khalkhali:IntroHopfCyclicCohomology}
M.~Khalkhali and B.~Rangipour.
\newblock Introduction to {H}opf--cyclic cohomology {(to appear in {\em Proc.
  Max Planck Inst. Conf. Noncommutative Geometry and Number Theory})}.
\newblock Preprint at {\tt arXiv:math/0503244 [math.QA]}.

\bibitem{Khalkhali:CupProducts}
M.~Khalkhali and B.~Rangipour.
\newblock Cup products in {H}opf-cyclic cohomology.
\newblock {\em C. R. Math. Acad. Sci. Paris}, 340(1):9--14, 2005.

\bibitem{Loday:CyclicHomology}
J.-L. Loday.
\newblock {\em Cyclic homology}, volume 301 of {\em Die Grundlehren der
  Mathematischen Wissenschaften}.
\newblock Springer-Verlag, Berlin, second edition, 1998.

\bibitem{MacLane:Homology}
S.~{MacLane}.
\newblock {\em Homology}, volume 114 of {\em Die Grundlehren der mathematischen
  Wissenschaften}.
\newblock Academic Press Inc., New York, 1963.

\bibitem{Nistor:BivariantChernConnes}
V.~Nistor.
\newblock A bivariant {C}hern-{C}onnes character.
\newblock {\em Ann. of Math. (2)}, 138(3):555--590, 1993.

\bibitem{Quillen:CyclicHomologyType}
D.~Quillen.
\newblock Bivariant cyclic cohomology and models for cyclic homology types.
\newblock {\em J. Pure Appl. Algebra}, 101(1):1--33, 1995.

\bibitem{Sharygin:CupProducts}
G.~I. Sharygin.
\newblock Pairings in {H}opf-cyclic cohomology of algebras and coalgebras with
  coefficients.
\newblock Preprint at {\tt arXiv:math/0610615v1 [math.KT]}.

\bibitem{Yoneda:ExtGroups}
N.~Yoneda.
\newblock On the homology theory of modules.
\newblock {\em J. Fac. Sci. Univ. Tokyo. Sect. I.}, 7:193--227, 1954.

\end{thebibliography}

\end{document}